\documentclass[11pt,reqno]{amsart}
\usepackage[margin=0.8in]{geometry}
\usepackage{amsmath,amssymb,amsthm,mathtools}
\numberwithin{equation}{section}
\usepackage{enumitem}
\usepackage[hidelinks]{hyperref}

\newtheorem{theorem}{Theorem}[section]
\newtheorem{lemma}[theorem]{Lemma}
\newtheorem{proposition}[theorem]{Proposition}

\theoremstyle{remark}
\newtheorem{remark}[theorem]{Remark}

\newcommand{\Irr}{\operatorname{Irr}}
\newcommand{\Hall}{\operatorname{Hall}}
\newcommand{\PSL}{\operatorname{PSL}}
\newcommand{\PGL}{\operatorname{PGL}}
\newcommand{\SL}{\operatorname{SL}}
\newcommand{\GL}{\operatorname{GL}}
\newcommand{\PSU}{\operatorname{PSU}}
\newcommand{\PGU}{\operatorname{PGU}}

\newcommand{\GU}{\operatorname{GU}}
\newcommand{\Syl}{\operatorname{Syl}}
\newcommand{\St}{\operatorname{St}}
\newcommand{\Sp}{\operatorname{Sp}}
\newcommand{\PSp}{\operatorname{PSp}}

\title{Characterizing normal Hall subgroups by character degrees, II}
\author{YONG YANG}
\address{Department of Mathematics, Texas State University, San Marcos, TX 78666, USA.}
\email{yang@txstate.edu}

\subjclass[2020]{20C15, 20D20}
\keywords{Hall subgroup, character degree, permutation character, finite simple group}

\begin{document}
\begin{abstract}
We prove that a Hall $\pi$-subgroup $H$ of a finite group $G$ is normal if and only if every irreducible constituent of the permutation character $1_H^G$ has $\pi'$-degree.
\end{abstract}

\maketitle

\section{Introduction}

Let $G$ be a finite group and let $H$ be a Hall $\pi$-subgroup of $G$.  The question considered in \cite{LYZ} asks whether normality of $H$ can be characterized by the degrees of the irreducible constituents of $1_H^G$.  This is motivated by a theorem of Malle and Navarro \cite{MN12}: if $P\in\Syl_p(G)$, then $P$ is normal in $G$ if and only if every irreducible constituent of $1_P^G$ has degree prime to $p$.  Thus the Hall-subgroup question asks whether the same characterization holds when a single prime $p$ is replaced by a set of primes $\pi$.  For a nonabelian simple group, the relevant assertion is the following.

\begin{quote}
If $1<H<G$ and $H\in\Hall_\pi(G)$, then there exists $\chi\in\Irr(1_H^G)$ such that $\chi(1)$ is divisible by a prime in $\pi$.
\end{quote}

The restriction $H<G$ is necessary: if $H=G$, then $1_H^G=1_G$.  This is the simple-group assertion needed in the reduction of \cite{LYZ}.

For proper nontrivial Hall subgroups, the assertion is known for alternating groups, sporadic groups, and exceptional groups of Lie type; see \cite{LYZ}.  It therefore remains to prove the assertion for classical simple groups.  Throughout the paper, $p$ denotes the defining characteristic when $G$ is a group of Lie type, and $e(q,t)$ denotes the multiplicative order of $q$ modulo a prime $t\nmid q$.

Suppose first that $2\notin\pi$.  In nondefining characteristic, a $\pi$-defect-zero character is enough: if $\chi\in\Irr(G)$ has $\pi$-defect zero, then $\chi$ vanishes on every nonidentity $\pi$-element and
\[
\chi_H=\frac{\chi(1)}{|H|}\rho_H,
\]
so $[\chi_H,1_H]>0$.  Gross and Vdovin--Revin classify the odd-order Hall subgroups of the classical groups; we use the formulation in \cite{VR02}.  If $r=\min\pi$ and $\tau=\pi\setminus\{r\}$, every Hall $\pi$-subgroup contains a normal abelian Hall $\tau$-subgroup \cite[Theorem~1]{VR02}, and their classification restricts the cyclotomic parameters to finitely many possibilities.  Except for a few split cases, one can choose a regular semisimple label with $\pi'$-centralizer and hence a $\pi$-defect-zero semisimple character.  The split linear case is handled by the permutation action on $2$-spaces, while the unitary case uses the rank-three action on isotropic points.  In the symplectic and orthogonal groups the classification also provides suitable maximal tori; a Clifford-theoretic argument then passes from an inner-diagonal group to the simple derived subgroup.

If $p\in\pi$ and $2\notin\pi$, Revin's classification shows that a Hall $\pi$-subgroup is contained in a Borel subgroup: the other parabolic possibilities have even order.  The Steinberg character then has a nonzero fixed vector and degree divisible by $p$.  Now suppose that $2\in\pi$.  In defining characteristic we again use Revin's classification.  In nondefining characteristic the Revin--Vdovin classification places the Hall subgroup in a decomposition stabilizer or a torus normalizer, apart from three exceptional orthogonal embeddings.  The decomposition and torus-normalizer cases are handled by permutation characters, and the three orthogonal exceptions by rank-three actions together with an orbit-divisibility argument.

\section{Defect-zero characters}

\begin{lemma}\label{lem:defect}
Let $G$ be a finite group, $H\in\Hall_\pi(G)$, and $\chi\in\Irr(G)$.  If $\chi$ has $\pi$-defect zero, then $\chi\in\Irr(1_H^G)$.
\end{lemma}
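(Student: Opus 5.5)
By Frobenius reciprocity, $[1_H^G,\chi]=[1_H,\chi_H]$, so it suffices to show that $\chi_H$ contains the trivial character of $H$. The plan is to compute $\chi_H$ explicitly, as sketched in the introduction. We show that $\chi$ vanishes on every nonidentity element of $H$. Then $\chi_H$ is a multiple of the regular character $\rho_H$, and the multiple is forced to be positive.

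\emph{Step 1 (vanishing).} Here ``$\pi$-defect zero'' means that $\chi(1)_\pi=|G|_\pi$. Equivalently, for every prime $t\in\pi$, $\chi$ has $t$-defect zero, since $|G|_\pi=\prod_{t\in\pi}|G|_t$ and $\chi(1)_\pi=\prod_{t\in\pi}\chi(1)_t$. Let $1\neq h\in H$. Then $h$ is a $\pi$-element, so its order is divisible by some prime $t\in\pi$. Hence $h$ is not $t$-regular, i.e.\ $h$ is not a $t'$-element. Brauer's classical theorem says a character of $t$-defect zero vanishes on every element whose order is divisible by $t$. Applying it gives $\chi(h)=0$.

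The only possible subtlety is this reduction to the single-prime statement, in case the paper's definition of $\pi$-defect zero is formulated differently. If needed, one instead reproves the vanishing directly via the standard central-character argument: $\omega_\chi(\hat K)=|K|\chi(g)/\chi(1)$ is an algebraic integer, and $|K|$ is divisible by $|G|_t/|C_G(g)|_t$. We expect to just cite Brauer.

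\emph{Step 2 (computation).} Since $\chi_H$ vanishes off the identity, $\chi_H=\frac{\chi(1)}{|H|}\rho_H$. Therefore
\[
[\chi_H,1_H]=\frac{1}{|H|}\sum_{h\in H}\chi(h)=\frac{\chi(1)}{|H|}.
\]
This is positive, and it is in fact an integer, because $|H|=|G|_\pi$ divides $\chi(1)$. Thus $[1_H^G,\chi]=\chi(1)/|H|>0$, and so $\chi\in\Irr(1_H^G)$. No step here is a genuine obstacle; the argument is a direct application of Brauer's vanishing theorem together with Frobenius reciprocity.
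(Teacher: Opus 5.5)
Your proof is correct and is the same argument as the paper's: $\chi$ vanishes on the nonidentity elements of $H$, so $\chi_H=\frac{\chi(1)}{|H|}\rho_H$ and $[\chi_H,1_H]=\chi(1)/|H|>0$, and your prime-by-prime reduction to Brauer's vanishing theorem simply spells out the step the paper states without justification. Do rely on the Brauer citation rather than your fallback sketch, because the central-character integrality alone only shows that $\chi(g)$ is $t$-locally divisible by $|C_G(g)|_t$, not that $\chi(g)=0$.
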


\begin{proof}
Every nonidentity element of $H$ is a $\pi$-element.  A $\pi$-defect-zero character vanishes on such elements.  Hence
\[
\chi_H=\frac{\chi(1)}{|H|}\rho_H,
\]
and therefore
\[
[\chi_H,1_H]=\frac{\chi(1)}{|H|}>0.
\]
\end{proof}

\begin{lemma}\label{lem:two-extension}
Let $N\lhd X$ and suppose that $X/N$ is a $2$-group.  Let $\pi$ be a set of odd primes and let $H\in\Hall_\pi(N)$.  If $\chi\in\Irr(X)$ has $\pi$-defect zero, then some constituent $\psi\in\Irr(\chi_N)$ has $\pi$-defect zero in $N$ and satisfies
\[
[\psi_H,1_H]>0.
\]
\end{lemma}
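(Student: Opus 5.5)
Let $\psi\in\Irr(\chi_N)$ be any irreducible constituent. By Clifford's theorem, $\chi_N=e\sum_{i=1}^t\psi_i$, where the $\psi_i$ are the $X$-conjugates of $\psi=\psi_1$. Moreover $e$ and $t$ both divide $|X:N|$, which is a power of $2$. (That $t$ divides $|X:N|$ is clear, since $t=|X:I_X(\psi)|$. That $e$ divides $|I_X(\psi):N|$ is the standard consequence of projective representation theory.) Hence $\chi(1)=et\,\psi(1)$, and therefore
\[
\chi(1)_\pi=\psi(1)_\pi,
\]
because $\pi$ consists of odd primes.

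Next compare $\pi$-parts of the group orders. Since $X/N$ is a $2$-group and $2\notin\pi$, we have $|X|_\pi=|N|_\pi$. Because $\chi$ has $\pi$-defect zero, $\chi(1)_\pi=|X|_\pi$. Combining this with the previous paragraph gives $\psi(1)_\pi=|N|_\pi$, so $\psi$ has $\pi$-defect zero in $N$. This holds for every constituent $\psi$ of $\chi_N$, so in particular for some.

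Finally, apply Lemma~\ref{lem:defect} inside the group $N$, using $H\in\Hall_\pi(N)$. This gives $\psi\in\Irr(1_H^N)$. Equivalently, by Frobenius reciprocity,
\[
[\psi_H,1_H]=\frac{\psi(1)}{|H|}>0.
\]

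The only step needing care is the divisibility $e\mid |X:N|$. Since $X/N$ is a $2$-group, it is solvable, so one could avoid the general projective-representation result by inducting along a chain of normal subgroups of prime index $2$. At each such step either $\psi$ extends, giving $e=1$, or $\psi$ induces irreducibly, giving $t=2$. Either way, only a factor of $2$ can be lost at each step, which is harmless for the $\pi$-part.

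Alternatively, one can bypass the degree computation entirely. A $\pi$-defect-zero character $\chi$ vanishes on nonidentity $\pi$-elements of $X$. Then $\chi_H=\frac{\chi(1)}{|H|}\rho_H$ directly, since $H$ is also a Hall $\pi$-subgroup of $X$. Hence $1_H$ occurs in $\chi_H$, and therefore in some $\psi_H$. This secondary argument gives only the fixed-vector conclusion, however, not the defect-zero property of $\psi$. So I would use the degree argument above as the main line.
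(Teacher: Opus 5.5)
Your proof is correct and follows the paper's Clifford-theoretic argument: $e$ and $t$ are powers of $2$, so $\chi(1)_\pi=\psi(1)_\pi=|X|_\pi=|N|_\pi$, and every constituent of $\chi_N$ has $\pi$-defect zero in $N$. The only difference is the last step: the paper applies Lemma~\ref{lem:defect} to $\chi$ in $X$ (using that $H$ is also Hall in $X$) and then uses $0<[\chi_H,1_H]=e\sum_i[(\psi_i)_H,1_H]$, whereas you apply Lemma~\ref{lem:defect} to $\psi$ directly in $N$, which is slightly cleaner and shows that every constituent of $\chi_N$ has a nonzero $H$-fixed vector.
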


\begin{proof}
Since $|X:N|$ is a power of $2$, we have $|X|_\pi=|N|_\pi=|H|$, and hence $H$ is also a Hall $\pi$-subgroup of $X$.  Lemma~\ref{lem:defect} gives
\[
[\chi_H,1_H]>0.
\]
By Clifford theory,
\[
\chi_N=e(\psi_1+\cdots+\psi_t),
\]
where the $\psi_i$ are $X$-conjugate.  Both $e$ and $t$ divide powers of $|X:N|$, and therefore are powers of $2$.  Hence
\[
\chi(1)_\pi=\psi_i(1)_\pi
\]
for every $i$.  Since $\chi$ has $\pi$-defect zero,
\[
\psi_i(1)_\pi=|X|_\pi=|N|_\pi,
\]
so every $\psi_i$ has $\pi$-defect zero in $N$.  Finally,
\[
0<[\chi_H,1_H]
=e\sum_{i=1}^t[(\psi_i)_H,1_H],
\]
and therefore at least one $\psi_i$ has a nonzero $H$-fixed vector.
\end{proof}

We use semisimple characters attached to regular semisimple labels.  Let $X$ be a finite group of Lie type in characteristic $p$, let $X^*$ be a dual group, and let $s\in X^*$ be regular semisimple.  The connected centralizer of $s$ in the ambient algebraic group is a maximal torus, while the full finite centralizer $C_{X^*}(s)$ may have a component group.  A semisimple character $\chi_s$ labelled by $s$ has degree
\[
\chi_s(1)=|X^*:C_{X^*}(s)|_{p'}.
\]
See \cite[Section~12.4]{DM20} for regular and semisimple characters.  Consequently, since $|X|=|X^*|$, if $p\notin\pi$ and $C_{X^*}(s)$ is a $\pi'$-group, then
\[
\chi_s(1)_\pi=|X|_\pi,
\]
so $\chi_s$ has $\pi$-defect zero.

We also use the compatibility of Deligne--Lusztig induction with central quotients; see Digne--Michel \cite[Propositions~11.3.10 and~11.4.9]{DM20}.  Hence, in type $A$, semisimple characters labelled in the projective dual group descend to the relevant simple central quotient.  The linear arguments therefore use regular semisimple labels in $\PGL_n(q)$, while the unitary arguments use labels in $\PGU_n(q)$.  For the two $\PSU_3(q)$ Singer-torus calculations, we work directly in the projective group, so no determinant-one representative is needed.

\begin{lemma}\label{lem:typeA-regular-tori}
Let $X^*$ be $\PGL_n(q)$ or $\PGU_n(q)$.  Each Singer torus used below, and each torus of type $(n-1,1)$ used below, contains an element whose centralizer in $X^*$ is precisely that torus.  More generally, let $T^*$ be a projective torus of type $(a,b)$ with $a\ne b$.  Then $T^*$ contains an element $s$ such that
\[
[C_{X^*}(s):T^*]\mid \gcd(a,b).
\]
\end{lemma}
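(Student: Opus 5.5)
We lift everything to the linear (resp.\ unitary) group and then project. Let $\tilde X=\GL_n(q)$ or $\GU_n(q)$, with $Z=Z(\tilde X)$ the scalars, so $X^*=\tilde X/Z$. A projective torus of type $(a,b)$ is $T^*=\tilde T/Z$, where $\tilde T=T_a\times T_b$. Here $T_a\cong \mathbb{F}_{q^a}^\times$ is embedded in $\GL_a(q)$ (resp.\ the appropriate cyclic group of order $q^a-(-1)^a$ in $\GU_a(q)$), and similarly for $T_b$. A Singer torus is the case $b=0$, and type $(n-1,1)$ is $b=1$.

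\textbf{Step 1: choose $\tilde s$.} Choose $\tilde s=(\alpha,\beta)\in\tilde T$ with the following properties:
\begin{enumerate}[label=(\roman*)]
\item $\alpha$ generates $\mathbb{F}_{q^a}$ (resp.\ $\mathbb{F}_{q^{2a}}$) over the base field, and likewise $\beta$ for its block.
\item The Galois orbits of $\alpha$ and $\beta$ are distinct.
\end{enumerate}
Since $a\ne b$, the degrees differ, so (ii) is automatic. Then $\tilde s$ is regular semisimple in $\tilde X$ and $C_{\tilde X}(\tilde s)=\tilde T$.

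Such generators exist because the number of elements of $\mathbb{F}_{q^a}^\times$ lying in a proper subfield is at most $\sum_{d\mid a,\,d<a}q^d<q^a-1$. In the unitary case one counts norm-type subgroups in the same way; the small cases $a\le 2$ need a direct check, and for $a=1$ any element works.

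\textbf{Step 2: bound the projective centralizer.} Let $s=\tilde sZ$. An element $gZ$ centralizes $s$ exactly when $g\tilde s g^{-1}=\lambda\tilde s$ for some scalar $\lambda$. Comparing eigenvalue multisets, multiplication by $\lambda$ must permute the eigenvalues of $\tilde s$. Since the eigenvalues of $\alpha$ and $\beta$ come in Galois orbits of different sizes $a$ and $b$, multiplication by $\lambda$ preserves each block's eigenvalue set. On the $a$-block, $\lambda\alpha=\alpha^{q^i}$ for some $i$ (resp.\ $\alpha^{(-q)^i}$), so $\lambda$ determines an element of $\mathrm{Gal}$ of order dividing $a$; similarly on the $b$-block.

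The map $\lambda\mapsto\lambda$ from this group of allowed scalars into $\mu$ is injective, and the group embeds in both $\mathbb{Z}/a$ and $\mathbb{Z}/b$. Hence its order divides $\gcd(a,b)$. Moreover $C_{X^*}(s)/T^*$ injects into this group of allowed $\lambda$, since for fixed $\lambda$ the set of $g$ with $g\tilde sg^{-1}=\lambda\tilde s$ is a single coset of $C_{\tilde X}(\tilde s)=\tilde T$. This gives $[C_{X^*}(s):T^*]\mid\gcd(a,b)$.

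For type $(n-1,1)$ we have $\gcd=1$, so $C_{X^*}(s)=T^*$.

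\textbf{Step 3: the Singer case.} For the Singer case ($b=0$) the bound is only $\lambda^n$-type: $\lambda$ can realize a Frobenius power of order dividing $n$. The main obstacle is therefore the Singer tori, where we must choose $\alpha$ so that no nontrivial scalar multiple $\lambda\alpha$ equals a conjugate $\alpha^{q^i}$ with $i\not\equiv0\pmod n$.

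If $\alpha^{q^i-1}=\lambda\in\mathbb{F}_q^\times$ (resp.\ lies in the norm-one scalars), then $\alpha^{(q^i-1)(q-1)}=1$. So it suffices to pick $\alpha$ of order not dividing $(q^i-1)(q-1)$ for every proper $i$ with $i\mid n$. A generator of $\mathbb{F}_{q^n}^\times$ works whenever $q^n-1\nmid (q^i-1)(q-1)$, which holds except in tiny cases such as $n=2$, $q$ small.

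For the specific Singer tori used later, I would verify these exceptions directly, or note that there we only need the index to be a $\pi'$-number. The unitary version is identical with $q$ replaced by $-q$.
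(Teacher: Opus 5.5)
Your proof is correct. For the $(a,b)$ bound it is the paper's argument: lift to $\GL_n(q)$ or $\GU_n(q)$, note that a scalar multiplier $\lambda$ with $g\tilde s g^{-1}=\lambda\tilde s$ must preserve each of the two Frobenius orbits because their lengths differ, and embed the group of admissible $\lambda$ into both $\mathbb{Z}/a$ and $\mathbb{Z}/b$. You also spell out why $C_{X^*}(s)/T^*$ injects into that group, which the paper leaves implicit.

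The real difference is the Singer case, where your Step~3 supplies something the paper's proof omits. The paper chooses an arbitrary element of degree $n$ and asserts that ``quotienting by scalars gives the asserted centralizer.'' That is not automatic. For $q$ odd and $\alpha=\sqrt c$ with $c$ a nonsquare in $\mathbb{F}_q$, one has $\alpha^q=-\alpha$. So the Frobenius element centralizes the image in $\PGL_2(q)$ of multiplication by $\alpha$, and that projective centralizer is strictly larger than $T^*$.

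Choosing $\alpha$ to generate the cyclic torus, as you do, is exactly the fix, and your hedge about tiny exceptions is unnecessary in the linear case. For $1\le i<n$ one has $0<(q^i-1)(q-1)<q^n-1$, so a generator of $\mathbb{F}_{q^n}^\times$ always works. In the unitary case, the divisibility $q^n-(-1)^n\mid((-q)^i-1)(q+1)$ for some $i\mid n$ with $1\le i<n$ holds only for $(n,q)=(2,2),(2,3),(3,2)$. None of these gives a simple $\PSU_n(q)$ with $n\ge3$.

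One edge case neither you nor the paper addresses: for $n=2$, type $(n-1,1)$ has $a=b=1$, so your ``$a\ne b$'' step does not apply. There you must pick $\beta\notin\{\alpha,-\alpha\}$ in $\mathbb{F}_q^\times$, since $\operatorname{diag}(\alpha,-\alpha)$ has nonabelian projective centralizer. Such a $\beta$ exists exactly when $q\ge4$, which is the range where $\PSL_2(q)$ is simple.
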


\begin{proof}
Consider the usual field-extension model.  In the linear case a Singer factor of dimension $a$ is obtained from $\mathbb F_{q^a}^{\times}$ acting by multiplication.  Choose an element of degree $a$ over $\mathbb F_q$.  Its eigenvalues over an algebraic closure are its $q$-conjugates, and these are pairwise distinct.  For a torus of type $(n-1,1)$ choose such an element on the $(n-1)$-dimensional block and choose the eigenvalue on the $1$-dimensional block outside that Frobenius orbit.  Thus all eigenvalues are distinct.  The centralizer in $\GL_n(q)$ is the corresponding torus, and quotienting by scalars gives the asserted centralizer in $\PGL_n(q)$.

Now let $T^*$ have type $(a,b)$ with $a\ne b$.  Choose full-degree elements on both blocks.  Their Frobenius orbits are disjoint because they have different lengths, so their centralizer in the full linear group is the corresponding torus.  If an element centralizes the image of $s$ in the projective group, then it conjugates $s$ to $zs$ for some scalar $z$.  Multiplication by $z$ preserves each of the two Frobenius orbits, since the orbit lengths are different.  On an orbit of length $a$ it acts as a power of Frobenius, so the order of $z$ divides $a$; similarly it divides $b$.  Hence the possible scalar multipliers form a group of order dividing $\gcd(a,b)$, and therefore
\[
[C_{X^*}(s):T^*]\mid\gcd(a,b).
\]

For unitary groups, replace the Frobenius map $x\mapsto x^q$ by the unitary Frobenius $x\mapsto x^{-q}$.  The preceding orbit argument then proves the Singer and $(n-1,1)$ assertions and, for a torus of type $(a,b)$ with $a\ne b$, the displayed divisibility for the projective centralizer.
\end{proof}

\section{Linear groups}

Let
\[
G=\PSL_n(q),\qquad q=p^f,
\]
and suppose that $H\in\Hall_\pi(G)$ with $2,p\notin\pi$.  For an odd prime $t\ne p$, write
\[
e_t=e(q,t).
\]
The pairwise Hall classification for $\GL_n(q)$ gives the following four patterns for $t<s$ in $\pi$; see the proof of \cite[Theorem~4]{VR02} and Gross \cite{Gross95}:
\begin{enumerate}[label=\textup{(\roman*)}]
\item $e_t=e_s$;
\item $e_t=t-1$, $e_s=t$ and the corresponding floor equality holds;
\item $e_t=t-1$, $e_s=t$ and the adjacent floor equality holds;
\item $e_t=t-1$, $e_s=1$ and
\[
\left\lfloor\frac{n}{t-1}\right\rfloor=\left\lfloor\frac nt\right\rfloor.
\]
\end{enumerate}
Moreover the mixed cases can occur only when $t=r=\min\pi$.  Thus all primes in $\tau=\pi\setminus\{r\}$ have one common cyclotomic parameter.

\subsection{Equal multiplicative orders}

\begin{proposition}\label{prop:linear-equal}
Assume that $e(q,t)=d$ for every $t\in\pi$.
If $d>1$, then $G$ has a $\pi$-defect-zero character.  If $d=1$ and no prime in $\pi$ divides $n$, the same conclusion holds.
\end{proposition}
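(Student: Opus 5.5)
Fix $d$ as in the statement, and let $G=\PSL_n(q)$ with dual group $X^*=\PGL_n(q)$. The plan is to find a regular semisimple $s\in X^*$ whose full centralizer $C_{X^*}(s)$ is a $\pi'$-group. The discussion after Lemma~\ref{lem:two-extension} then shows that the semisimple character $\chi_s$ of $\GL_n(q)/Z$ has $\pi$-defect zero. We then push this down to $G$. By the compatibility with central quotients, $\chi_s$ is a character of $\PGL_n(q)$'s dual, namely of $\SL_n(q)/Z$-type quotients. Restricting to $\PSL_n(q)$, the Clifford constituents have degree $\chi_s(1)/t$, where $t$ divides $|\PGL_n(q):\PSL_n(q)|=\gcd(n,q-1)$. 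So we also need the $\pi$-part of $\gcd(n,q-1)$ to be lost neither in $|G|$ nor in the degree. This is automatic when no prime of $\pi$ divides $\gcd(n,q-1)$. In the case $d>1$, every $t\in\pi$ satisfies $t\nmid q-1$, so this holds. In the case $d=1$ it holds because no prime of $\pi$ divides $n$. In either case $|\PGL_n(q)|_\pi=|G|_\pi$, and any constituent has $\pi$-defect zero. Alternatively, we could run a Lemma~\ref{lem:two-extension}-style Clifford argument with a $\pi'$-index quotient.

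The choice of torus goes as follows. Write $n=kd+m$ with $0\le m<d$. Since every $t\in\pi$ has $e(q,t)=d$, the $\pi$-part of $|\GL_n(q)|$ lives entirely in the factors $q^{jd}-1$ for $j\le k$. We want a maximal torus of type $(\lambda_1,\dots,\lambda_\ell)$, where $|T|=\prod (q^{\lambda_i}-1)$, such that no $\lambda_i$ is divisible by $d$. Then no $t\in\pi$ divides $|T|$, since $t\mid q^{\lambda}-1$ iff $d\mid\lambda$. For $d>1$, a natural choice is the type $(n-1,1)$ when $d\nmid n-1$ (note $d\nmid 1$). Otherwise $d\mid n-1$, so $d\nmid n$, and we take the Singer type $(n)$. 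By Lemma~\ref{lem:typeA-regular-tori}, both types contain $s$ with $C_{X^*}(s)=T^*$. The projective torus $T^*$ has order $|T|/(q-1)$, which is a $\pi'$-number, so $C_{X^*}(s)$ is a $\pi'$-group.

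For $d=1$, every $q^{\lambda}-1$ is divisible by the primes of $\pi$ dividing $q-1$, so no torus of $\GL_n(q)$ is $\pi'$. However, the projective Singer torus has order $(q^n-1)/(q-1)=1+q+\dots+q^{n-1}\equiv n \pmod t$ for every $t\mid q-1$. Hence it is a $\pi'$-group exactly when no prime of $\pi$ divides $n$, which is our hypothesis. Lemma~\ref{lem:typeA-regular-tori} again gives a regular $s$ with centralizer equal to that torus.

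The main obstacle is the descent from the $\PGL_n(q)$-labelled character to the simple group $\PSL_n(q)$. This needs care with the central quotient and the component group, and a check that no $\pi$-part is lost in the Clifford restriction. Since the index $\gcd(n,q-1)$ is a $\pi'$-number in both cases, the constituents keep the full $\pi$-part, and I expect this to go through as sketched.
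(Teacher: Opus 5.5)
Your torus choices match the paper's up to harmless variations. For $d>1$ you use the $(n-1,1)$ torus whenever $d\nmid n-1$ and otherwise the Singer torus, whereas the paper uses the Singer torus when $d\nmid n$ and the $(n-1,1)$ torus when $d\mid n$. For $d=1$ your congruence $1+q+\cdots+q^{n-1}\equiv n\pmod t$ replaces LTE. The step that fails as written is the descent. A label $s\in\PGL_n(q)$ indexes a Lusztig series of $\SL_n(q)$, because $\PGL_n$ is dual to $\SL_n$. It does not give a character of $\GL_n(q)/Z=\PGL_n(q)$; characters of $\PGL_n(q)$ are labelled by semisimple elements of $\SL_n(q)$. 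So there is no character of $\PGL_n(q)$ to restrict to $\PSL_n(q)$, and nothing in your sketch makes the character trivial on the center. That condition is real. $\mathcal E(\SL_n(q),s)$ consists of characters of $\PSL_n(q)$ only when $s$ lies in the image of $\SL_n(q)$ in $\PGL_n(q)$. For example, for odd $q$ and $s$ the image of $\operatorname{diag}(x,1)$ with $x$ a nonsquare, the characters in $\mathcal E(\SL_2(q),s)$ are nontrivial on $-I$. Likewise, a series $\mathcal E(\GL_n(q),\tilde s)$ is trivial on $Z$ only when $\det\tilde s=1$.

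Your Clifford route goes through once the label is chosen this way. Work in the self-dual group $\GL_n(q)$ and pick $\tilde s\in T$ with $C_{\GL_n(q)}(\tilde s)=T$ and $\det\tilde s=1$. For the Singer torus, take a generator of the norm-one subgroup of $\mathbb F_{q^n}^\times$; its order $(q^n-1)/(q-1)$ exceeds $q^g-1$ for every proper divisor $g$ of $n$, so it has degree $n$. For the $(n-1,1)$ torus, take $(x,N(x)^{-1})$ with $x\in\mathbb F_{q^{n-1}}^\times$ of degree $n-1$ (and $x^2\ne1$ when $n=2$). The semisimple character $\chi_{\tilde s}$ is then a character of $\PGL_n(q)$ of degree $|\GL_n(q):T|_{p'}$. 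Since $|T|_\pi=(q-1)_\pi$ for each of your tori, $\chi_{\tilde s}(1)_\pi=|\PGL_n(q)|_\pi$. Your observation that $\gcd(n,q-1)$ is a $\pi'$-number gives $|\PGL_n(q)|_\pi=|G|_\pi$, so every constituent of the restriction to $G=\PSL_n(q)$ has $\pi$-defect zero. This version uses only centralizers in $\GL_n(q)$ and bypasses the scalar-multiplier part of Lemma~\ref{lem:typeA-regular-tori}. In the paper's $\SL_n(q)$--$\PGL_n(q)$ formulation, the corresponding requirement is that $s$ lie in the image of $\SL_n(q)$; this is what the appeal to compatibility with central quotients has to deliver.
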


\begin{proof}
Suppose first that $d>1$, and put $G^*=\PGL_n(q)$.  If $d\nmid n$, let $T^*$ be a projective Singer torus of $G^*$.  Then
\[
 |T^*|=\frac{q^n-1}{q-1}.
\]
No prime $t\in\pi$ divides this number, since $e(q,t)=d\nmid n$ and $d>1$.  By Lemma~\ref{lem:typeA-regular-tori}, choose $s\in T^*$ with $C_{G^*}(s)=T^*$, so the associated semisimple character has $\pi$-defect zero.

If $d\mid n$, let $T^*$ be a projective torus of type $(n-1,1)$.  Its order is
\[
 |T^*|=q^{n-1}-1.
\]
Since $d\nmid n-1$, this is a $\pi'$-number.  By Lemma~\ref{lem:typeA-regular-tori}, there is $s\in T^*$ with $C_{G^*}(s)=T^*$.  The associated semisimple character has $\pi$-defect zero.

Now assume $d=1$.  Then every $t\in\pi$ divides $q-1$.  The projective Singer torus has order
\[
\frac{q^n-1}{q-1}.
\]
For every $t\in\pi$, LTE gives
\[
v_t\!\left(\frac{q^n-1}{q-1}\right)=v_t(n).
\]
Hence, if no prime in $\pi$ divides $n$, this torus is a $\pi'$-group.  By Lemma~\ref{lem:typeA-regular-tori}, this torus contains an element whose projective centralizer is the torus itself, and the associated semisimple character has $\pi$-defect zero.
\end{proof}

It remains to consider
\[
e(q,t)=1\quad(t\in\pi),\qquad r\mid n.
\]
In this case a $\pi$-defect-zero character need not exist, so we use a constituent of a permutation character.

Let $\Omega_j$ be the set of $j$-dimensional subspaces of the natural module, and let $P_j$ be the stabilizer of a member of $\Omega_j$.  The permutation character on $2$-spaces has the form
\[
1_{P_2}^{\GL_n(q)}=1+\chi^{(n-1,1)}+\chi^{(n-2,2)}.
\]
The character $\chi^{(n-2,2)}$ is unipotent and
\begin{equation}\label{eq:two-row-degree}
\chi^{(n-2,2)}(1)={n\brack2}_q-{n\brack1}_q.
\end{equation}

\begin{lemma}\label{lem:linear-mon}
Let $M$ be the full monomial subgroup of $\GL_n(q)$.  If $n\ge4$, then
\[
[\chi^{(n-2,2)},1_M^{\GL_n(q)}]>0.
\]
\end{lemma}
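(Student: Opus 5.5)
The plan is to compute the multiplicity of $\chi^{(n-2,2)}$ in $1_M^{\GL_n(q)}$ by comparing orbit counts, using the decompositions of the permutation characters on $\Omega_1$ and $\Omega_2$. Write $\Gamma=\GL_n(q)$. By Frobenius reciprocity and Mackey's formula, for subgroups $A,B\le \Gamma$ the inner product $[1_A^\Gamma,1_B^\Gamma]$ equals $|A\backslash \Gamma/B|$, which is the number of $B$-orbits on $\Gamma/A$. I will use this with $A=P_1$ and $A=P_2$, and $B=M$.

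\textbf{Step 1: reduce to an orbit count.} We have
\[
1_{P_1}^\Gamma=1+\chi^{(n-1,1)},\qquad 1_{P_2}^\Gamma=1+\chi^{(n-1,1)}+\chi^{(n-2,2)},
\]
and these three characters are distinct irreducibles. The first decomposition is standard (the action on $\Omega_1$ is $2$-transitive); the second was recorded above. Subtracting the two gives
\[
[\chi^{(n-2,2)},1_M^\Gamma]=[1_{P_2}^\Gamma,1_M^\Gamma]-[1_{P_1}^\Gamma,1_M^\Gamma]=|\Omega_2/M|-|\Omega_1/M|.
\]
So it suffices to show that $M$ has strictly more orbits on $2$-spaces than on $1$-spaces.

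\textbf{Step 2: count the orbits on points.} Write $M=D\rtimes S_n$, where $D$ is the diagonal torus. For a vector $v$, the diagonal group $D$ rescales the coordinates of $v$ independently, so the $D$-orbit of the line $\langle v\rangle$ is determined by the support of $v$. The symmetric group $S_n$ then permutes supports, so the $M$-orbit of $\langle v\rangle$ is determined by the support size $k\in\{1,\dots,n\}$. Hence $|\Omega_1/M|=n$; this also holds for $q=2$. In fact only the upper bound $|\Omega_1/M|\le n$ is needed.

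\textbf{Step 3: exhibit at least $n+1$ orbits on $2$-spaces.} Two quantities are preserved by $M$:
\begin{itemize}
\item the size of the support of $W$, meaning the union of the supports of its vectors;
\item the number of coordinate lines $\langle e_i\rangle$ contained in $W$.
\end{itemize}
Both are invariant because $M$ permutes the coordinate lines. Consider the following $2$-spaces:
\begin{itemize}
\item $A_k=\langle e_1,\,e_2+\cdots+e_k\rangle$ for $2\le k\le n$, with support size $k$. The space $A_2$ contains two coordinate lines, and $A_k$ with $k\ge3$ contains exactly one.
\item $C=\langle e_1+e_2,\,e_1+e_3\rangle$, with support size $3$. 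A general vector of $C$ is $(a+b)e_1+ae_2+be_3$, and it lies on a coordinate line only if two of $a+b,a,b$ vanish, which forces $a=b=0$. So $C$ contains no coordinate line.
\item $B=\langle e_1+e_2,\,e_3+e_4\rangle$, which requires $n\ge4$. It has support size $4$ and contains no coordinate line.
\end{itemize}
The pairs of invariants of these $n+1$ spaces are pairwise distinct, so they lie in distinct $M$-orbits. Therefore $|\Omega_2/M|\ge n+1>n=|\Omega_1/M|$, and the multiplicity in Step 1 is positive.

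The main point needing care is Step 3: one must choose invariants that genuinely separate the orbits and hold uniformly in $q$, including $q=2$ where $D$ is trivial. This is also exactly where the hypothesis $n\ge4$ enters, through the space $B$. Steps 1 and 2 are routine.
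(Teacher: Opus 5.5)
Your proposal is correct and follows the paper's proof: both write the multiplicity as the number of $M$-orbits on $2$-spaces minus the number on points (namely $n$), and then exhibit $n+1$ pairwise inequivalent $2$-spaces separated by the invariants (support size, number of coordinate lines). The only difference is cosmetic: your representatives $C$ and $B$ for the $(3,0)$ and $(4,0)$ classes replace the paper's $V_3=\langle e_1+e_2,e_2+e_3\rangle$ and $V_4=\langle e_1+e_2,e_2+e_3+e_4\rangle$.
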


\begin{proof}
Choose the standard basis $e_1,\ldots,e_n$ with respect to which $M$ is monomial.  Let $a_j$ be the number of $M$-orbits on $j$-spaces.  The orbits on projective points are classified by support size, so $a_1=n$.

For a $2$-space $U$, let $s(U)$ be its support size and let $c(U)$ be the number of coordinate lines contained in $U$.  For $3\le s\le n$, the spaces
\[
U_s=\langle e_1,e_2+\cdots+e_s\rangle
\]
have $(s(U_s),c(U_s))=(s,1)$.  The coordinate plane has invariant $(2,2)$.  In addition,
\[
V_3=\langle e_1+e_2,e_2+e_3\rangle,
\qquad
V_4=\langle e_1+e_2,e_2+e_3+e_4\rangle
\]
have invariants $(3,0)$ and $(4,0)$, respectively.  Hence $a_2\ge n+1$.  Since
\[
a_2-a_1=[\chi^{(n-2,2)},1_M^{\GL_n(q)}],
\]
the assertion follows.
\end{proof}

\begin{proposition}\label{prop:linear-split}
Assume that $e(q,t)=1$ for every $t\in\pi$ and that $r\mid n$.  Then there exists
\[
\chi\in\Irr(1_H^G)
\]
with $r\mid\chi(1)$.
\end{proposition}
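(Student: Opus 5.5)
The plan is to exhibit a single constituent: the unipotent character $\chi^{(n-2,2)}$ for $n\ge4$, with the case $n=3$ treated separately. The first step is to place $H$ inside the image of a monomial group. Since $2,p\notin\pi$ and $e(q,t)=1$ for all $t\in\pi$, the Gross and Vdovin--Revin description of Hall subgroups of $\GL_n(q)$ shows that a Hall $\pi$-subgroup of $\GL_n(q)$ lies in the normalizer of a split maximal torus. That normalizer is the full monomial group $M\cong (q-1)^n\rtimes S_n$. Passing to $\SL_n(q)$ and then to $G=\PSL_n(q)$, we may assume $H\le \overline{M\cap\SL_n(q)}$. (If Hall subgroups are not all conjugate, the same containment holds for each class.)

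Next I transfer the fixed vector. The unipotent character $\chi^{(n-2,2)}$ of $\GL_n(q)$ has $Z(\GL_n(q))$ in its kernel, restricts irreducibly to $\SL_n(q)$, and therefore defines $\chi\in\Irr(G)$. For $n\ge4$, Lemma~\ref{lem:linear-mon} and Frobenius reciprocity show that $\chi^{(n-2,2)}$ has a nonzero $M$-fixed vector. This vector is fixed by $M\cap\SL_n(q)$, hence by $H$, so $\chi\in\Irr(1_H^G)$.

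It remains to check the degree. From \eqref{eq:two-row-degree} one computes
\[
\chi(1)=\frac{q^2(q^n-1)(q^{n-3}-1)}{(q-1)(q^2-1)}.
\]
Since $r$ is odd and $r\mid q-1$, the lifting-the-exponent lemma gives
\[
v_r(\chi(1))=v_r(n)+v_r(n-3)\ge v_r(n)\ge1
\]
for $n\ge4$. Note that $n=2$ cannot occur, since $r$ is odd and $r\mid n$.

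The main obstacle is $n=3$, which forces $r=3$ and $q\equiv1\pmod 3$; here $\chi^{(1,2)}$ is trivial and Lemma~\ref{lem:linear-mon} does not apply. I would use the constituents of $1_M^{\GL_3(q)}$ coming from Harish-Chandra induction $R_T^G(\theta)$ from the split torus $T$. Their irreducible constituents include characters of degree $(q+1)(q^2+q+1)$, which is divisible by $3$ because $q\equiv1\pmod 3$. I would determine which of these have $M$-fixed vectors via Mackey over $T\backslash G/M$, or equivalently by computing the $M$-orbits on flags. The aim is to show that some character of this degree descending to $\PSL_3(q)$ occurs in $1_M^G$. The fallback is a direct orbit count showing that $1_H^G$ is not a sum of characters of degree prime to $3$.
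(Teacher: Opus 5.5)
Your argument for $n\ge4$ is the paper's: $H$ lies in the image of the monomial group, Lemma~\ref{lem:linear-mon} supplies the fixed vector, and $r\mid\chi^{(n-2,2)}(1)$. Your LTE computation $v_r(\chi(1))=v_r(n)+v_r(n-3)$ is a sharper form of the paper's reduction of Gaussian binomials modulo $r$.

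The gap is $n=3$, where you give a programme rather than a proof. You never show that a principal-series character of degree $(q+1)(q^2+q+1)$ has a nonzero $M$-fixed vector and restricts irreducibly to $\SL_3(q)$. Your ``fallback'' only restates the goal.

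The programme also fails outright at $q=4$. There $e(4,3)=1$, so $\pi=\{3\}$ and $r=n=3$. Since $\mathbb F_4^\times$ has only the characters $1,\lambda,\lambda^2$, the only irreducible character of $\GL_3(4)$ of degree $105$ is $R_T^G(1\otimes\lambda\otimes\lambda^2)$. It is invariant under twisting by $\lambda\circ\det$, so it restricts to $\SL_3(4)$ as three characters of degree $35$, which is prime to $3$. Indeed $\PSL_3(4)$ has no irreducible character of degree $105$. For $q\ge7$ your route could probably be finished using flags whose $M$-stabilizer is just the scalars, but that is exactly the computation you postponed.

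The missing idea is that $n=3$ needs no fixed-vector computation at all. Here $G$ has a $\pi$-defect-zero character, and Lemma~\ref{lem:defect} puts it in $1_H^G$ for every Hall $\pi$-subgroup.

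\begin{itemize}
\item Take $s$ in the projective Singer torus $T^*\le\PGL_3(q)$, with $|T^*|=q^2+q+1$ and $C_{\PGL_3(q)}(s)=T^*$. Choosing $s$ of prime order $\ell\mid(q^2+q+1)/3$ makes $s$ regular and puts it in the image of $\SL_3(q)$, so the resulting character is trivial on the center.
\item The semisimple character $\chi_s$ has degree $|\PGL_3(q):T^*|_{p'}=(q-1)^2(q+1)$, which is divisible by $r=3$.
\item Every $t\in\pi$ divides $q-1$, and $q^2+q+1\equiv3\pmod t$, so only $t=3$ can divide $q^2+q+1$.
\item For $t=3$, $v_3(q^2+q+1)=1$ is cancelled by the factor $(3,q-1)=3$ in $|\PSL_3(q)|$.
\end{itemize}

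Hence $\chi_s(1)_\pi=|G|_\pi$. This is how the paper disposes of $n=3$.
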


\begin{proof}
If $n=3$, then $r=3$.  Since every prime $t\in\pi$ satisfies $e(q,t)=1$, all Hall primes divide $q-1$.  Let $T^*\leq G^*=\PGL_3(q)$ be a projective Singer torus, so that
\[
|T^*|=q^2+q+1.
\]
By Lemma~\ref{lem:typeA-regular-tori}, choose $s\in T^*$ with $C_{G^*}(s)=T^*$.  The corresponding semisimple character on the simple quotient has degree
\[
\chi_s(1)=(q-1)^2(q+1),
\]
by the central-quotient compatibility recalled above.  On the other hand,
\[
|\PSL_3(q)|_{p'}=
\frac{(q-1)^2(q+1)(q^2+q+1)}{(3,q-1)}.
\]
Now $\gcd(q-1,q^2+q+1)=\gcd(q-1,3)$.  Since $3\mid q-1$, LTE gives $v_3(q^2+q+1)=1$, so the factor $3$ in $q^2+q+1$ is exactly cancelled by the denominator $(3,q-1)$.  No other Hall prime divides $q^2+q+1$.  Hence $\chi_s$ contains the full $\pi$-part of $|G|$ and is of $\pi$-defect zero.  The assertion follows from Lemma~\ref{lem:defect}.

We may assume $n\ge4$.  The Hall classification places $H$ in a monomial subgroup.  By Lemma~\ref{lem:linear-mon}, the character $\chi^{(n-2,2)}$ has an $H$-fixed vector.  Since $q\equiv1\pmod r$, Gaussian binomial coefficients reduce modulo $r$ to ordinary binomial coefficients.  Thus by \eqref{eq:two-row-degree},
\[
\chi^{(n-2,2)}(1)
\equiv \binom n2-n
=\frac{n(n-3)}2
\equiv0\pmod r.
\]
The character occurs in the permutation representation on $2$-spaces, so scalar matrices act trivially.  Its restriction to $\SL_n(q)$ is irreducible, and it therefore gives an irreducible character of $\PSL_n(q)$ whose degree is divisible by $r$ and which has a nonzero $H$-fixed vector.
\end{proof}

\subsection{Two multiplicative orders}

Suppose first that
\[
e(q,r)=r-1,
\qquad
e(q,s)=r\quad(s\in\tau).
\]
For $r\ge5$, choose $1\le b\le r-2$ such that $n-b$ is divisible by neither $r-1$ nor $r$.  At most two values of $b$ are forbidden.  Except when $r=5$ and $n=6$, we may also choose $b\ne n-b$: for $r\ge7$ there are at least five possible values of $b$, while for $r=5$ the only time the unique admissible value can be $n/2$ is $n=6$.  Put $a=n-b$, and let $T^*$ be the corresponding projective torus of type $(a,b)$ in $G^*=\PGL_n(q)$.  Its order is
\[
 |T^*|=\frac{(q^a-1)(q^b-1)}{q-1}.
\]
The order conditions just established show that $T^*$ is a $\pi'$-torus.  By Lemma~\ref{lem:typeA-regular-tori}, choose $s\in T^*$ such that
\[
[C_{G^*}(s):T^*]\mid\gcd(a,b).
\]
Since $b\le r-2$ and $r=\min\pi$, the integer $\gcd(a,b)$ is a $\pi'$-number.  Thus $C_{G^*}(s)$ is a $\pi'$-group, and the associated semisimple character of $G=\PSL_n(q)$ has $\pi$-defect zero.

If $r=5$ and $n=6$, use instead the projective torus of type $(3,2,1)$.  Its order is
\[
(q^3-1)(q^2-1),
\]
which is a $\pi'$-number because the relevant multiplicative orders are $4$ and $5$.  Choose elements of full degrees $3$ and $2$ on the first two blocks and a scalar on the last block, with the three Frobenius orbits disjoint.  Any scalar multiplier preserving their union must preserve each orbit and has order dividing $\gcd(3,2,1)=1$.  Hence the projective centralizer is exactly this torus, and again the associated semisimple character has $\pi$-defect zero.

For $r=3$, the floor conditions give only $n=3$ in the first mixed pattern and $n=2,5$ in the second.  The value $n=2$ cannot occur with a prime $s$ satisfying $e(q,s)=3$.  If $n=5$, the Singer torus is a $\pi'$-torus.  If $n=3$ and $q>2$, a regular split torus is a $\pi'$-torus.  Finally, for $q=2$ we have $\PSL_3(2)\cong\PSL_2(7)$ and $\pi=\{3,7\}$; here no $\pi$-defect-zero character exists, but the degree-$8$ coset action of a subgroup $7{:}3$ has the form
\[
1_H^G=1_G+\chi,
\qquad \chi(1)=7.
\]

Finally suppose that
\[
e(q,r)=r-1,
\qquad
e(q,s)=1\quad(s\in\tau).
\]
The proof of \cite[Theorem~4]{VR02} gives
\[
n=mr+b=m(r-1)+(m+b),
\qquad m+b<r-1,
\]
and hence $r-1\nmid n$.  Moreover the primes in $\tau$ are larger than $n$ in this case.  Therefore the projective Singer torus $T^*\leq G^*=\PGL_n(q)$, of order $(q^n-1)/(q-1)$, is a $\pi'$-torus.  By Lemma~\ref{lem:typeA-regular-tori}, choose $s\in T^*$ with $C_{G^*}(s)=T^*$, and the associated semisimple character of $G=\PSL_n(q)$ has $\pi$-defect zero.

\begin{theorem}\label{thm:linear}
Let $G=\PSL_n(q)$ be simple and let $1<H<G$ with $H\in\Hall_\pi(G)$ and $2,p\notin\pi$.  Then there exists $\chi\in\Irr(1_H^G)$ such that $\chi(1)$ is divisible by a prime in $\pi$.
\end{theorem}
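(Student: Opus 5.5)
The theorem should follow by assembling the case analysis already carried out in this section. I would argue as follows. Since $2,p\notin\pi$ and $H$ is a proper nontrivial Hall $\pi$-subgroup, $\pi$ is a nonempty set of odd primes different from $p$ dividing $|G|$. Put $r=\min\pi$ and $\tau=\pi\setminus\{r\}$. By the pairwise classification recalled above (the proof of \cite[Theorem~4]{VR02} and \cite{Gross95}), all primes of $\tau$ share one cyclotomic parameter. Moreover, the pair $(e(q,r),e(q,s))$ for $s\in\tau$ falls into one of the patterns (i)--(iv). Any mixed pattern involves only $r$. So there are three cases: all primes of $\pi$ have a common order $d$; or $e(q,r)=r-1$ and $e(q,s)=r$ for $s\in\tau$; or $e(q,r)=r-1$ and $e(q,s)=1$ for $s\in\tau$. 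If $\pi=\{r\}$, we are automatically in the first case.

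In the common-order case, Proposition~\ref{prop:linear-equal} gives a $\pi$-defect-zero character of $G$ whenever $d>1$, or whenever $d=1$ and no prime of $\pi$ divides $n$. Lemma~\ref{lem:defect} then places it in $\Irr(1_H^G)$. Its degree has $\pi$-part $|G|_\pi=|H|>1$, so it is divisible by a prime in $\pi$.

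If $d=1$ and some prime of $\pi$ divides $n$, I need that prime to be $r$ before Proposition~\ref{prop:linear-split} applies. This is where I would look hardest at the classification. For $d=1$, Hall existence forces the Hall subgroup into the monomial group, which requires every $t\in\pi$ to exceed $n$ or behave like $r$. In the Gross/Vdovin--Revin description with all $e_t=1$, a Hall subgroup exists only if each $t\in\tau$ satisfies $t>n$. Hence $t\nmid n$ for $t\in\tau$, and a prime of $\pi$ dividing $n$ must be $r$. Proposition~\ref{prop:linear-split} then provides $\chi\in\Irr(1_H^G)$ with $r\mid\chi(1)$.

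In the two mixed-order cases, the discussion in the subsection ``Two multiplicative orders'' already does the work. For $r\ge5$ it uses a torus of type $(a,b)$, or $(3,2,1)$ when $(r,n)=(5,6)$. For $r=3$ it handles the small $n$ separately, and for $e(q,s)=1$ it uses the Singer torus. Each of these produces a $\pi$-defect-zero semisimple character of $G$, to which Lemma~\ref{lem:defect} applies as before. The lone exception is $\PSL_3(2)$ with $\pi=\{3,7\}$, where the explicit decomposition $1_H^G=1_G+\chi$ with $\chi(1)=7$ gives the result directly.

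The main obstacle is confirming that the cases are exhaustive. Specifically, I must check that the classification really forces the primes of $\tau$ to be coprime to $n$ in the split case, and that every remaining $r=3$ configuration of patterns (ii)--(iii) is covered.
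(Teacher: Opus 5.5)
Your proposal is correct and follows the paper's own argument. Theorem~\ref{thm:linear} is exactly the assembly of three pieces, each combined with Lemma~\ref{lem:defect}: Proposition~\ref{prop:linear-equal}, Proposition~\ref{prop:linear-split}, and the mixed-order torus constructions, with $\PSL_3(2)$, $\pi=\{3,7\}$, handled directly. You also make explicit a step the paper leaves implicit when it writes ``it remains to consider $r\mid n$'': in the all-$e(q,t)=1$ case the classification forces every prime of $\tau$ to exceed $n$, so only $r$ can divide $n$.
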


\begin{remark}
For the character $\chi^{(n-2,2)}$, irreducibility on restriction from $\GL_n(q)$ to $\SL_n(q)$ follows from Clifford theory: a nontrivial determinant twist moves a unipotent character to a nontrivial semisimple Lusztig series, so it cannot fix $\chi^{(n-2,2)}$.
\end{remark}

\section{Unitary groups}

Let
\[
G=\PSU_n(q),\qquad q=p^f,
\]
and again assume $2,p\notin\pi$.  For an odd prime $t\ne p$, define
\[
d(t)=\operatorname{ord}_t(-q).
\]
If $a=e(q,t)$, then
\begin{equation}\label{eq:unitary-param}
d(t)=
\begin{cases}
2a,&a\text{ odd},\\
a/2,&a\equiv2\pmod4,\\
a,&a\equiv0\pmod4.
\end{cases}
\end{equation}
Moreover
\begin{equation}\label{eq:unitary-div}
t\mid q^m-(-1)^m
\quad\Longleftrightarrow\quad
d(t)\mid m.
\end{equation}

The nine pairwise cases for $\GU_n(q)$ in the proof of \cite[Theorem~4]{VR02} collapse, in terms of $d(t)$, to the same four patterns as in the linear case:
\[
d(t)=d(s),
\]
or, with $t=r=\min\pi$,
\[
d(r)=r-1,\quad d(s)=r,
\]
or
\[
d(r)=r-1,\quad d(s)=1.
\]
In particular all primes in $\tau$ have one common unitary parameter.  Vdovin and Revin give the unitary cases and state that cases (8) and (9) are treated in the same way as linear case (4) \cite[proof of Theorem~4]{VR02}.

\subsection{Equal orders of $-q$}

\begin{proposition}\label{prop:unitary-equal}
Assume $d(t)=d$ for every $t\in\pi$.  If $d>1$, then $G$ has a $\pi$-defect-zero character.  If $d=1$ and no prime in $\pi$ divides $n$, the same conclusion holds.
\end{proposition}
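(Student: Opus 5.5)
We follow the proof of Proposition~\ref{prop:linear-equal}, with $q$ replaced by $-q$ throughout. Take $G^*=\PGU_n(q)$ and use the unitary analogue of Lemma~\ref{lem:typeA-regular-tori}. In the unitary group a torus built from blocks of sizes $m_1,\dots,m_k$ has order $\prod_i (q^{m_i}-(-1)^{m_i})$. The projective torus divides this by $q+1$. Because of \eqref{eq:unitary-div}, we can decide whether $t\in\pi$ divides a given factor by asking whether $d(t)\mid m_i$. The semisimple character of $G$ labelled by a regular $s$ with $\pi'$-centralizer then has degree $|G^*:C_{G^*}(s)|_{p'}$. Its $\pi$-part is therefore $|G|_\pi$, so it has $\pi$-defect zero, and it descends to $\PSU_n(q)$ by the central-quotient compatibility recalled earlier.

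\emph{Case $d>1$, $d\nmid n$.} Take the projective Singer torus, of order $(q^n-(-1)^n)/(q+1)$. Since $d(t)=d\nmid n$ for every $t\in\pi$, no such $t$ divides $q^n-(-1)^n$. By Lemma~\ref{lem:typeA-regular-tori} there is $s$ with $C_{G^*}(s)=T^*$, and we are done.

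\emph{Case $d>1$, $d\mid n$.} Then $d\nmid n-1$. Take the projective torus of type $(n-1,1)$, of order $(q^{n-1}-(-1)^{n-1})(q+1)/(q+1)=q^{n-1}-(-1)^{n-1}$. This is a $\pi'$-number by \eqref{eq:unitary-div}. Lemma~\ref{lem:typeA-regular-tori} again supplies a label whose centralizer is exactly this torus.

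\emph{Case $d=1$.} Every $t\in\pi$ divides $q+1$, i.e.\ $-q\equiv1\pmod t$. Use the projective Singer torus, of order $((-q)^n-1)/(-q-1)$ up to sign. Lifting-the-exponent applied to the base $-q$ gives $v_t\bigl(((-q)^n-1)/(-q-1)\bigr)=v_t(n)$; this is valid because $t$ is odd and divides $-q-1$. So if no prime of $\pi$ divides $n$, this torus is a $\pi'$-group, and we conclude as before.

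\emph{Main obstacle.} The step needing the most care is the claim that Lemma~\ref{lem:typeA-regular-tori} applies in the unitary setting, namely that the projective centralizer equals $T^*$ exactly and not an extension by scalars. For the Singer and $(n-1,1)$ tori, I would rerun the orbit argument with the twisted Frobenius $x\mapsto x^{-q}$. For the $(n-1,1)$ torus, the eigenvalue on the $1$-block should be chosen in the norm-one group $\mu_{q+1}$ and outside the orbit of the $(n-1)$-block. Even if some scalar ambiguity remained, it would have order dividing $\gcd$ of the block sizes. For a Singer torus this is $n$, which is harmless in the $d=1$ case because there no prime of $\pi$ divides $n$. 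In the $d>1$ cases I would verify that such a scalar multiplier is trivial, as the lemma asserts.
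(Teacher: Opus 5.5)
Your proposal is the paper's proof: the projective Singer torus when $d\nmid n$, the $(n-1,1)$ torus when $d\mid n$ (using $d\nmid n-1$), and the Singer torus with LTE applied to $-q$ when $d=1$, followed by Lemma~\ref{lem:typeA-regular-tori}, the degree formula, and descent via the central-quotient compatibility. One point that both your write-up and the paper pass over is that a semisimple character of $\mathrm{SU}_n(q)$ labelled by $s\in\PGU_n(q)$ is trivial on the centre only when $s$ lies in the image $\PSU_n(q)$ of $\mathrm{SU}_n(q)$; so either choose $s\in T^*\cap\PSU_n(q)$ (for the $(n-1,1)$ torus, take the $1$-block eigenvalue to be the inverse of the determinant of the $(n-1)$-block), or work in $\PGU_n(q)$ and pass to $\PSU_n(q)$ by Clifford theory as in Lemma~\ref{lem:two-extension}, which is valid because $|\PGU_n(q):\PSU_n(q)|=\gcd(n,q+1)$ is a $\pi'$-number in both cases.
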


\begin{proof}
The proof is the unitary analogue of Proposition~\ref{prop:linear-equal}.  We work in the adjoint unitary group $X^*=\PGU_n(q)$, where the regular semisimple label is chosen.  If $d\nmid n$, let $T^*$ be the projective Singer torus.  Then
\[
 |T^*|=\frac{q^n-(-1)^n}{q+1}.
\]
By \eqref{eq:unitary-div}, a prime $t\in\pi$ can divide $q^n-(-1)^n$ only when $d\mid n$.  Thus $T^*$ is a $\pi'$-group.  If $d\mid n$, instead take the projective torus of type $(n-1,1)$.  Its order is
\[
 |T^*|=q^{n-1}-(-1)^{n-1},
\]
and this is again a $\pi'$-number because $d\nmid n-1$.  By Lemma~\ref{lem:typeA-regular-tori}, choose $s\in T^*$ with $C_{X^*}(s)=T^*$, and the regular-semisimple degree formula above gives
\[
 \chi_s(1)=|X^*:T^*|_{p'}.
\]
Since $T^*$ is a $\pi'$-group and $p\notin\pi$, the resulting semisimple character has $\pi$-defect zero; by the central-quotient duality stated above it gives an irreducible character of $G=\PSU_n(q)$ of $\pi$-defect zero.

If $d=1$, then every $t\in\pi$ divides $q+1$, and LTE gives
\[
v_t\left(\frac{q^n-(-1)^n}{q+1}\right)=v_t(n).
\]
Hence the projective Singer torus $T^*\leq\PGU_n(q)$ is a $\pi'$-torus whenever no prime in $\pi$ divides $n$.  By Lemma~\ref{lem:typeA-regular-tori}, $T^*$ contains an element whose projective centralizer is $T^*$; the associated semisimple character has $\pi$-defect zero.
\end{proof}

It remains to consider the equal-$d=1$ case with $r\mid n$.  If $n=3$, then $r=3$.  Choose a regular element $s$ in a Singer torus of $X^*=\PGU_3(q)$.  Then
\[
 |C_{X^*}(s)|=q^2-q+1
\]
and the corresponding semisimple character of $G=\PSU_3(q)$ has degree
\[
 \chi_s(1)=(q-1)(q+1)^2.
\]
Since
\[
 |G|_{p'}=\frac{(q-1)(q+1)^2(q^2-q+1)}{(3,q+1)}
\]
and $3\mid q+1$, LTE gives $v_3(q^2-q+1)=1$.  Also $\gcd(q+1,q^2-q+1)=3$.  Hence $\chi_s(1)$ contains the full $t$-part of $|G|$ for every $t\in\pi$, so $\chi_s$ has $\pi$-defect zero.  We may therefore assume $n\ge5$.

Here we use the rank-three action on isotropic points; see \cite[Theorem~1.1]{KL82}.  Let $\Omega$ be the set of totally isotropic $1$-spaces and let $P$ be a point stabilizer.  Then
\[
1_P^G=1+\alpha+\beta.
\]
The nonprincipal constituent degrees are as follows.  If $n=2m$, then
\begin{align*}
\alpha(1)&=\frac{q^2(q^{2m}-1)(q^{2m-3}+1)}{(q+1)(q^2-1)},\\
\beta(1)&=\frac{q^3(q^{2m-2}-1)(q^{2m-1}+1)}{(q+1)(q^2-1)}.
\end{align*}
If $n=2m+1$, then
\begin{align*}
\alpha(1)&=\frac{q^3(q^{2m}-1)(q^{2m-1}+1)}{(q+1)(q^2-1)},\\
\beta(1)&=\frac{q^2(q^{2m-2}-1)(q^{2m+1}+1)}{(q+1)(q^2-1)}.
\end{align*}
These are the usual multiplicities of the two nontrivial eigenvalues in the Hermitian polar graph.

\begin{lemma}\label{lem:unitary-rank3}
Let $M$ be the normalizer of an orthogonal decomposition of the natural unitary module into nonsingular $1$-spaces.  If $q$ is odd and $n\ge4$, then both $\alpha$ and $\beta$ have nonzero $M$-fixed vectors, except when $(q,n)=(3,5)$.  In that exceptional case $\alpha^M\ne0$, and $\alpha(1)$ is even.  More generally, if $n\ge4$ and an odd prime $r$ divides both $q+1$ and $n$, then both $\alpha$ and $\beta$ have nonzero $M$-fixed vectors; in this case, if $n$ is even then $r\mid\alpha(1)$, while if $n$ is odd then $r\mid\beta(1)$.
\end{lemma}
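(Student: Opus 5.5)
The plan is to read off $[\alpha,1_M^G]$ and $[\beta,1_M^G]$ from the action of $M$ on the hermitian polar graph $\Gamma$ on $\Omega$, and to get the divisibility statements from a direct $r$-adic computation on the degree formulas. Let $A$ be the adjacency matrix of $\Gamma$ (two isotropic points adjacent when they are orthogonal). Its eigenspaces on $\mathbb C\Omega$ are the $G$-modules affording $1$, $\alpha$ and $\beta$, with eigenvalues $k$, $\theta_1$, $\theta_2$. The $M$-invariant functions on $\Omega$ are the indicators of $M$-orbits, so $\dim(\mathbb C\Omega)^M=N$, the number of $M$-orbits on $\Omega$. Since $A$ commutes with $G$, it preserves $(\mathbb C\Omega)^M$, where it acts by the quotient matrix $B$ of the (equitable) orbit partition. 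Writing $a=\dim\alpha^M$ and $b=\dim\beta^M$, this gives
\[
N=1+a+b,\qquad \operatorname{tr}B=k+a\theta_1+b\theta_2 .
\]
Here $\operatorname{tr}B=\sum_{O}|\Gamma(x_O)\cap O|$ for representatives $x_O\in O$. Since $\theta_1\neq\theta_2$, these two equations determine $a$ and $b$, and I will show both are positive.

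To count orbits, fix an orthonormal basis $e_1,\dots,e_n$ with $M$ the monomial unitary group (diagonal entries of norm $1$, all permutations). A point $\langle\sum x_ie_i\rangle$ is isotropic iff $\sum N(x_i)=0$ in $\mathbb F_q$, where $N$ is the norm. Since $M$ realises every unit-norm rescaling and every permutation, the $M$-orbit of a point is determined by the multiset $\{N(x_i)\}_{x_i\neq0}\subset\mathbb F_q^{*}$ with sum $0$, taken modulo a common $\mathbb F_q^{*}$-scaling. I will list small representatives:
\begin{itemize}
\item support $2$, with norms $(1,-1)$;
\item support $3$, with norms $(1,1,-2)$ when $q$ is odd and $3\nmid q$, or $(1,1,1)$ when $p=3$;
\item support $4$;
\item support $n$ or $n-1$ when $r\mid n$ and $r\mid q+1$, so that all-ones-type vectors are isotropic, since $\sum 1=n\equiv0$ needs $p\mid n$.
\end{itemize}
I will correct the last item by choosing norm patterns summing to $0$ in $\mathbb F_q$.

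For each representative I then compute $|\Gamma(x)\cap O|$, i.e.\ the number of points of the same orbit orthogonal to $x$. This is the main obstacle: the combinatorics of supports and norm patterns must be handled carefully, and I expect that the resulting values of $a$ and $b$ only need to be bounded below, not computed exactly. My first attempt will use only the orbits of support $\le 4$ and the top-support orbit. I will compute the diagonal entries of $B$ for these and show that the $2\times 2$ system forces $a,b\ge1$. If the trace route proves messy, I will instead write down two explicit $M$-invariant functions $f$ (for instance support-size indicators) and check that $Af$ is not in $\operatorname{span}(\mathbf 1,f)$ with a single eigenvalue. For $(q,n)=(3,5)$ the same bookkeeping, done explicitly (for $q=3$ the norms lie in $\mathbb F_3^{*}=\{\pm1\}$, so there are very few orbits), yields $a>0$ and $b=0$. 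Evenness of $\alpha(1)$ there is read directly from the formula.

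The divisibility claims come from LTE, using that $q\equiv-1\pmod r$ and that the denominator has $v_r\bigl((q+1)(q^2-1)\bigr)=2v_r(q+1)$. For $n=2m$:
\[
v_r(q^{2m}-1)=v_r(q+1)+v_r(n),\qquad v_r(q^{2m-3}+1)=v_r(q+1)+v_r(n-3),
\]
so $v_r(\alpha(1))=v_r(n)+v_r(n-3)\ge1$. For $n=2m+1$:
\[
v_r(q^{2m-2}-1)=v_r(q+1)+v_r(n-3),\qquad v_r(q^{2m+1}+1)=v_r(q+1)+v_r(n),
\]
so $v_r(\beta(1))=v_r(n-3)+v_r(n)\ge1$. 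This completes the plan.
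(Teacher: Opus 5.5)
\textbf{This proposal has a genuine gap.} Your $r$-adic computation of $\alpha(1)$ and $\beta(1)$ is correct, and it is the paper's argument for the divisibility clause. The fixed-vector assertions are the real content of the lemma, and they are not proved. You set up a framework and then defer exactly the step you call the main obstacle.

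\textbf{Why the trace route fails as described.} The identities $N=1+a+b$ and $\operatorname{tr}B=k+a\theta_1+b\theta_2$ involve the number of \emph{all} $M$-orbits and the diagonal entries of $B$ over all of them. There are many such orbits: zero-sum multisets of norms in $\mathbb F_q^\times$ for every support size, up to scaling, and their number grows with $q$ and $n$. Solving gives $b=\bigl((N-1)\theta_1+k-\operatorname{tr}B\bigr)/(\theta_1-\theta_2)$. So $b\ge1$ needs an \emph{upper} bound for $\operatorname{tr}B$, while every omitted orbit contributes an unknown nonnegative amount. The diagonal entries for supports $\le4$ and the top support therefore give no $2\times2$ system at all. (Also, your orbit description is that of the monomial subgroup of $\GU_n(q)$; in $\PSU_n(q)$ full-support orbits can split. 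Working in the larger group is harmless for proving nonvanishing, but it has to be said.)

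\textbf{What the fallback needs.} Your fallback criterion, $Af\notin\langle\mathbf 1,f\rangle$ for an $M$-invariant $f$, is the right one and is what the paper uses. All the work is in executing it. The paper takes $f=1_{O_2}$, the indicator of the isotropic points of support $2$. For $x\notin O_2$ with support $S$, $|S|=k$, and $a_i=x_i^{q+1}$, it shows
\[
(Af)(x)=(q+1)\binom{n-k}{2}+\#\{\{i,j\}\subseteq S:\ a_i+a_j=0\}.
\]
It then exhibits two test points off $O_2$ with different values:
\begin{enumerate}[label=\textup{(\roman*)}]
\item For $q>3$, a support-$3$ point and a support-$4$ point with norms $(a,-a,b,-b)$, $b\ne\pm a$, give different values.
\item For $q=3$ one is forced to use norms $(a,-a,a,-a)$, and the two values differ by $4(n-5)$. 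This is exactly why $(3,5)$ is exceptional. There one must check $Af=8f+4\cdot\mathbf 1$ and identify the $8$-eigenspace with $\alpha$ by comparing multiplicities with the degrees. Your plan has nothing that would detect this.
\end{enumerate}

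\textbf{Even $q$ is not covered.} The last sentence of the lemma does not assume $q$ odd: $q=2$, $r=3$ and $q=4$, $r=5$ are included. Your list of representatives covers only odd $q$. For $q=2$ there are no isotropic points of odd support. The paper instead compares supports $4$ and $6$, whose values differ by $6(n-7)\neq0$ because $3\mid n$.

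Your assertion that $b=0$ at $(3,5)$ is in fact true, but the lemma does not need it and you give no argument for it.
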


\begin{proof}
When an odd prime $r$ divides both $q+1$ and $n$, the divisibility follows directly from the displayed degree formulas and LTE.  If $n=2m$, then $r\mid m$ and
\[
v_r(\alpha(1))=v_r(m)+v_r(2m-3)>0,
\qquad
v_r(\beta(1))=0.
\]
If $n=2m+1$, then
\[
v_r(\alpha(1))=0,
\qquad
v_r(\beta(1))=v_r(n)+v_r(m-1)>0.
\]

We prove the fixed-point assertion.  Choose an orthonormal basis $e_1,\ldots,e_n$ defining $M$, and let $O_2$ be the $M$-orbit of isotropic points with support exactly $2$.  Put $f=1_{O_2}\in\mathbb C[\Omega]^M$, and let $A$ be the adjacency operator of the collinearity graph on $\Omega$.

For an isotropic point $x=\langle(x_1,\ldots,x_n)\rangle$ outside $O_2$, put
\[
S=\operatorname{supp}(x),\qquad k=|S|,
\qquad a_i=x_i^{q+1}\in\mathbb F_q^\times\quad(i\in S).
\]
Since $x$ is isotropic, $\sum_{i\in S}a_i=0$.  A direct count gives
\begin{equation}\label{eq:unitary-neighbors}
(Af)(x)=(q+1)\binom{n-k}{2}
 +\#\{\{i,j\}\subseteq S:a_i+a_j=0\}.
\end{equation}
Indeed, a coordinate pair disjoint from $S$ contributes all $q+1$ isotropic points on that hyperbolic $2$-space, a pair contained in $S$ contributes one point precisely when $a_i+a_j=0$, and a mixed pair contributes none.

Suppose first that $q>3$.  In the fixed-vector assertion we have $n\ge4$; in the divisibility assertion $r\mid n$ and hence $n\ge5$.  There is an isotropic point of support $3$; for such a point no two of the three nonzero norms can sum to zero, and hence
\[
(Af)(x_3)=(q+1)\binom{n-3}{2}.
\]
There is also an isotropic point of support $4$ with norm pattern $(a,-a,b,-b)$, where $b\ne\pm a$; then
\[
(Af)(x_4)=(q+1)\binom{n-4}{2}+2.
\]
These values are distinct for $n\ge4$.

If $q=3$, then the assertion involving an odd prime $r\mid q+1$ does not arise.  For the fixed-vector assertion, support $3$ and support $4$ with norm pattern $(a,-a,a,-a)$ give
\[
(Af)(x_3)=4\binom{n-3}{2},
\qquad
(Af)(x_4)=4\binom{n-4}{2}+4.
\]
Their difference is $4(n-5)$, so both nonprincipal projections are nonzero when $n\ne5$.  Suppose that $n=5$.  An isotropic point outside $O_2$ has support $3$, $4$, or $5$; for support $5$ the nonzero norms consist of four copies of $a$ and one copy of $-a$, or the reverse.  Formula~\eqref{eq:unitary-neighbors} therefore gives
\[
(Af)(x)=4\qquad(x\notin O_2).
\]
For $x\in O_2$, the same direct count, with $x$ itself omitted from the adjacency count, gives $(Af)(x)=12$.  Hence
\[
Af=8f+4\mathbf1.
\]
For $n=5$ the two restricted eigenvalues of the Hermitian point graph are $q^2-1=8$ and $-(q^3+1)=-28$.  Comparing their multiplicities with the displayed character degrees shows that the $8$-eigenspace affords $\alpha$.  Hence $\alpha^M\ne0$.  From the displayed degree formula above,
\[
\alpha(1)=1890,
\]
which is even.

If $q=2$, then $r=3$ and isotropic supports have even size.  For support $4$ and support $6$ we obtain from \eqref{eq:unitary-neighbors}
\[
(Af)(x_4)=3\binom{n-4}{2}+6,
\qquad
(Af)(x_6)=3\binom{n-6}{2}+15,
\]
whose difference is $6(n-7)\ne0$ because $3\mid n$ and $n\ge6$.

The pair $(q,n)=(3,5)$ was handled in the preceding paragraph, so assume $(q,n)\ne(3,5)$.  Let $E_\alpha$ and $E_\beta$ denote the two nonprincipal eigenspaces, affording $\alpha$ and $\beta$, respectively.  Then
\[
\mathbb C[\Omega]=\langle\mathbf1\rangle\oplus E_\alpha\oplus E_\beta.
\]
If the projection of $f$ onto either $E_\alpha$ or $E_\beta$ were zero, then $Af$ would be constant on $\Omega\setminus O_2$.  The preceding calculation contradicts this.  Hence $f$ has nonzero projection onto both eigenspaces, and therefore
\[
\alpha^M\ne0,
\qquad
\beta^M\ne0.
\]
\end{proof}

When $d(t)=1$ for every $t\in\pi$, all primes in $\pi$ divide $q+1$.  By the maximal-torus normalizer result for odd primes dividing $q+1$, the unitary monomial normalizer contains a Hall $\pi$-subgroup; since odd-order Hall subgroups are conjugate, we may assume $H\le M$.  Lemma~\ref{lem:unitary-rank3} then gives a constituent of $1_H^G$ whose degree is divisible by $r$.

\subsection{Two orders of $-q$}

Suppose first that
\[
d(r)=r-1,
\qquad d(s)=r\quad(s\in\tau).
\]
For $r\ge5$ use the same choice of $b$ as in the linear case.  If $a=n-b\ne b$, the projective unitary torus of type $(a,b)$ is a $\pi'$-torus, and Lemma~\ref{lem:typeA-regular-tori} gives an element $s$ for which
\[
[C_{X^*}(s):T^*]\mid\gcd(a,b).
\]
Again $b\le r-2$ and $r=\min\pi$, so the whole centralizer is a $\pi'$-group.  In the exceptional choice $r=5$, $n=6$, use the unitary torus of type $(3,2,1)$.  Its projective order is
\[
(q^3+1)(q^2-1),
\]
which is prime to the Hall primes because their unitary parameters are $4$ and $5$; the three different orbit lengths also force the projective centralizer to be this torus.  Thus the corresponding semisimple character is of $\pi$-defect zero.  If $r=3$, the floor conditions give only $n=3$ or $n=5$ (the formal value $n=2$ cannot contain a prime of unitary parameter $3$).  For $n=3$ a regular split unitary torus is $\pi'$, while for $n=5$ the unitary Singer torus is $\pi'$.

Now suppose that
\[
d(r)=r-1,
\qquad d(s)=1\quad(s\in\tau).
\]
The unitary cases (8),(9) in the proof of \cite[Theorem~4]{VR02} satisfy
\[
n<2s,
\qquad
\left\lfloor\frac{n}{r-1}\right\rfloor
=
\left\lfloor\frac nr\right\rfloor.
\]
Vdovin and Revin state that these cases are treated in the same way as linear case (4).  In particular the Hall subgroup is obtained from the analogous block and monomial construction.  Unless $s=n$ for some $s\in\tau$, the unitary Singer torus is a $\pi'$-torus and gives a $\pi$-defect-zero character.

If $s=n$, the Singer torus has an $s$-part.  We need the subgroup description in the proof of \cite[Theorem~4]{VR02}.  In linear case~(4) that proof places a Hall $\pi$-subgroup in a product of monomial subgroups of the $r$-dimensional and $1$-dimensional factors.  After listing the nine unitary cases, Vdovin and Revin state that unitary cases~(8) and~(9) are treated in the same way as linear case~(4).  Thus, in the unitary analogue, a Hall $\pi$-subgroup is contained in the normalizer $M$ of the corresponding orthogonal decomposition into nonsingular $1$-spaces.  Since odd-order Hall subgroups are conjugate, we may assume $H\le M$.  Now $s=n$ is odd and $s\mid q+1$, so Lemma~\ref{lem:unitary-rank3}, applied with the prime $s$, gives an $s$-divisible constituent with an $H$-fixed vector.

\begin{theorem}\label{thm:unitary}
Let $G=\PSU_n(q)$ be simple and let $1<H<G$ with $H\in\Hall_\pi(G)$ and $2,p\notin\pi$.  Then there exists $\chi\in\Irr(1_H^G)$ such that $\chi(1)$ is divisible by a prime in $\pi$.
\end{theorem}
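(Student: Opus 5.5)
The proof will assemble the case analysis carried out earlier in this section, in parallel with Theorem~\ref{thm:linear}. Fix $H\in\Hall_\pi(G)$ with $1<H<G$ and $2,p\notin\pi$, and put $r=\min\pi$ and $\tau=\pi\setminus\{r\}$. Since $H\ne1$, the set $\pi$ contains a prime dividing $|G|$, so the unitary parameters $d(t)$ make sense. The collapse of the nine pairwise cases of \cite[proof of Theorem~4]{VR02} reduces us to three situations: all $d(t)$ are equal; $d(r)=r-1$ and $d(s)=r$ for $s\in\tau$; or $d(r)=r-1$ and $d(s)=1$ for $s\in\tau$. The general principle is this: whenever we produce an irreducible character of $G$ of $\pi$-defect zero, Lemma~\ref{lem:defect} shows that it lies in $\Irr(1_H^G)$. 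Its degree has full $\pi$-part $|G|_\pi=|H|>1$, so it is divisible by a prime of $\pi$.

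\emph{Equal parameters.} If $d>1$, or if $d=1$ and no prime of $\pi$ divides $n$, then Proposition~\ref{prop:unitary-equal} gives a $\pi$-defect-zero character, and we are done. Suppose instead that $d=1$ and $r\mid n$.
\begin{itemize}
\item For $n=3$, use the explicit Singer-torus character computed above. It has degree $(q-1)(q+1)^2$ and is of $\pi$-defect zero.
\item For $n\ge5$, all primes of $\pi$ divide $q+1$, so $H$ is conjugate into the monomial normalizer $M$; here we use conjugacy of odd-order Hall subgroups.
\end{itemize}
In the second case Lemma~\ref{lem:unitary-rank3}, applied with the odd prime $r$ dividing $q+1$ and $n$, gives a constituent of $1_P^G$ with a nonzero $M$-fixed vector, and hence a nonzero $H$-fixed vector. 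This constituent is $\alpha$ if $n$ is even and $\beta$ if $n$ is odd, and in either case its degree is divisible by $r$. Frobenius reciprocity then places it in $\Irr(1_H^G)$. Note that $n=4$ cannot occur here, because $r\ge3$ divides $n$.

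\emph{Mixed parameters.} When $d(s)=r$, we take the $\pi'$-tori constructed above:
\begin{itemize}
\item type $(a,b)$ for $r\ge5$;
\item type $(3,2,1)$ when $r=5$ and $n=6$;
\item a split torus for $r=3$, $n=3$, and the Singer torus for $r=3$, $n=5$.
\end{itemize}
Lemma~\ref{lem:typeA-regular-tori} bounds the component group by a $\pi'$-number, so these tori give $\pi$-defect-zero semisimple characters of $\PGU_n(q)$. These descend to $G$ by the central-quotient compatibility. When $d(s)=1$, the Singer torus works unless $s=n$ for some $s\in\tau$. In that case $H\le M$ up to conjugacy, and Lemma~\ref{lem:unitary-rank3} with the prime $s$ (which is odd and divides $q+1$) gives an $s$-divisible constituent with an $H$-fixed vector.

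\emph{Expected obstacle.} The delicate points are twofold. First, one must check that the mixed-case degenerate configurations really avoid the exceptions of Lemma~\ref{lem:unitary-rank3}. Second, one must make sure that $\pi$-defect zero is preserved in passing from $\PGU_n(q)$ to $\PSU_n(q)$. For the latter I would rely on $\pi$ being odd and note that $|\PGU_n:\PSU_n|=(n,q+1)$, which may be divisible by $r$. I would handle this the way the $n=3$ computation does: compare $\chi_s(1)$ directly with $|G|_{p'}$, checking that the $\pi$-parts cancel against the $(n,q+1)$ denominator. I would first try to show that in the torus cases no prime of $\pi$ divides $(n,q+1)$ unless $d=1$, which is the situation already treated separately.
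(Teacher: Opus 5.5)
Your proposal is correct and is essentially the paper's own argument: Theorem~\ref{thm:unitary} is exactly the assembly of Proposition~\ref{prop:unitary-equal}, the $\PSU_3(q)$ Singer-torus computation, Lemma~\ref{lem:unitary-rank3} applied through the monomial normalizer $M$ (for $d=1$ with $r\mid n$, and for $s=n$ in the mixed case), and the mixed-parameter tori, organized just as you do. Your $(n,q+1)$ concern resolves as you anticipate, since a prime $t\in\pi$ divides $q+1$ precisely when $d(t)=1$; note, though, that these are semisimple characters of $\mathrm{SU}_n(q)$ with labels in $X^*=\PGU_n(q)$, not characters of $\PGU_n(q)$ itself.
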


\section{Symplectic groups}

Let
\[
G=\PSp_{2n}(q),\qquad q=p^f,
\]
and suppose that $H\in\Hall_\pi(G)$ with $2,p\notin\pi$.  For $t\in\pi$ put $e_t=e(q,t)$.  The symplectic part of the proof of \cite[Theorem~4]{VR02}, using Gross \cite{Gross95}, shows that for $t<s$ in $\pi$ only the following two possibilities occur:
\[
e_t=e_s\equiv0\pmod2,
\qquad 2n<e_s s,
\]
or
\[
e_t=e_s\equiv1\pmod2,
\qquad n<e_s s.
\]
Thus there is a common value
\[
d=e(q,t)\qquad(t\in\pi).
\]

\begin{proposition}\label{prop:symplectic}
Let $G=\PSp_{2n}(q)$ be simple and let $1<H<G$ with $H\in\Hall_\pi(G)$ and $2,p\notin\pi$.  Then $G$ has a $\pi$-defect-zero character.
\end{proposition}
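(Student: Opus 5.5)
We will produce a regular semisimple label $s$ in a dual group whose full centralizer is a $\pi'$-group, so that the semisimple character $\chi_s$ has $\pi$-defect zero by the degree formula $\chi_s(1)=|X^*:C_{X^*}(s)|_{p'}$. We then descend it to $G$ with Lemma~\ref{lem:two-extension}. Throughout, $d=e(q,t)$ is the common multiplicative order for $t\in\pi$.

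We work with $X=\Sp_{2n}(q)$ and $X^*=\mathrm{SO}_{2n+1}(q)$. We choose $s$ in the inner-diagonal setting, and then pass to $\PSp_{2n}(q)$ by Lemma~\ref{lem:two-extension}, applied with $N$ the simple group and $X/N$ of order $\gcd(2,q-1)$. (The centre $\{\pm1\}$ of $\Sp_{2n}(q)$ also has $2$-power order, so $\pi$-parts are unaffected.) Maximal tori of $\Sp_{2n}(q)$ correspond to signed partitions of $n$, with orders $\prod (q^{a_i}\mp1)$. Recall that $t\mid q^a-1$ iff $d\mid a$, and $t\mid q^a+1$ iff $d$ is even and $d/2\mid a$ with $a/(d/2)$ odd. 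We look for a torus in which no factor is divisible by any $t\in\pi$.

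\emph{Case $d$ odd.} Here no $t\in\pi$ divides any $q^a+1$. Take the Coxeter torus of order $q^n+1$; it is a $\pi'$-group. \emph{Case $d$ even, $d=2c$.} Here $t\mid q^a-1$ iff $2c\mid a$, and $t\mid q^a+1$ iff $c\mid a$ with $a/c$ odd. If $c\nmid n$, the Coxeter torus $q^n+1$ works. If $c\mid n$ and $n/c$ is even, then $q^n+1$ is again $\pi'$, since $n/c$ is even. If $c\mid n$ with $n/c$ odd, use type $q^{n-1}+1$ times $q+1$ when $c>1$. If $c=1$, so that $d=2$ and all $t\mid q+1$, take instead a torus $q^{a}-1$ with $a$ odd, which is never divisible by $t$: e.g.\ $(q^{n}-1)$ if $n$ is odd, and $(q^{n-1}-1)(q-1)$ if $n$ is even. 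Similar small adjustments apply with $c>1$, using the abundance of types. In each case we must also check that the torus is not too small relative to the constraints $n<ds$ or $2n<ds$. In fact those constraints are not needed for existence, only for Hall existence.

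\emph{Regularity.} In $\mathrm{SO}_{2n+1}(q)$, choose elements of full degree in each cyclic factor so that the eigenvalue multisets $\{\lambda^{\pm q^i}\}$ are pairwise disjoint and avoid $\pm1$ except for the mandatory eigenvalue $1$. Then $s$ is regular, and its connected centralizer is the torus $T^*$. The component group $C(s)/C^\circ(s)$ is a $2$-group, since $\mathrm{SO}_{2n+1}$ has fundamental group of order $2$ in the dual picture. Equivalently, since $\Sp_{2n}$ is simply connected, centralizers in the dual adjoint group have component group embedded in $Z(\Sp_{2n})$, which has order $\le 2$. Hence $C_{X^*}(s)$ is a $\pi'$-group. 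This is the step needing care: repeated factors such as $(q+1)^2$ must be avoided, or else distinct eigenvalues must be chosen within a factor, which is possible unless $q$ is tiny.

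\emph{Main obstacle.} The main obstacle is the small cases where a needed factor like $q+1$ or $q-1$ is too small to contain regular elements with disjoint orbits. Examples are $q=3$ with a factor $q-1=2$, or types with two equal blocks. Here we will swap to an alternative torus type, using $n\ge2$ and the freedom to split blocks by sizes prime to $c$. We would also check the few residual $(n,q)$ directly. The case $n=1$ is $\PSL_2(q)$ and is already covered by Theorem~\ref{thm:linear}.
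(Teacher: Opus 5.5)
Your torus selection is essentially the paper's, but the passage to $G=\PSp_{2n}(q)$ fails as you set it up: you conflate the cover $\Sp_{2n}(q)$ of $G$ with the inner-diagonal overgroup of $G$. Lemma~\ref{lem:two-extension} needs $N\lhd X$ with $X/N$ a $2$-group. With $X=\Sp_{2n}(q)$, however, the simple group is a quotient of $X$, not a normal subgroup. Your parenthetical remark about the centre controls only $\pi$-parts of orders, not kernels. For $q$ odd, a semisimple character $\chi_s$ of $\Sp_{2n}(q)$ with $s\in\mathrm{SO}_{2n+1}(q)$ is a character of $\PSp_{2n}(q)$ only if $-1\in\ker\chi_s$. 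This happens exactly when $s$ has trivial spinor norm, i.e.\ $s\in\Omega_{2n+1}(q)$. Nothing in your choice of $s$ (``elements of full degree in each cyclic factor'') ensures this. Already for $\Sp_2(q)=\SL_2(q)$, whose dual is $\mathrm{SO}_3(q)\cong\PGL_2(q)$, the labels in $\PGL_2(q)\setminus\PSL_2(q)$ give characters that are faithful on the centre.

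The group you want in Lemma~\ref{lem:two-extension} is the one containing $N$ with index $\gcd(2,q-1)$. That is the inner-diagonal group $X=\mathrm{PCSp}_{2n}(q)$, whose dual is $\mathrm{Spin}_{2n+1}(q)$, and this is the paper's setup. Its maximal tori have the same orders $\prod(q^{a_i}\pm1)$. By Steinberg's theorem, regular semisimple elements of the simply connected dual have connected centralizers, so $C_{X^*}(s)=T^*$ and your component-group discussion becomes unnecessary.

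Be aware that for $q$ odd such a torus maps onto the index-$2$ subgroup $\bar T^*\cap\Omega_{2n+1}(q)$ of the corresponding torus $\bar T^*\le\mathrm{SO}_{2n+1}(q)$. So in either formulation the regular element must be found among elements of trivial spinor norm. For the one-block tori, the square of a generator is regular for odd $q$, except for the split torus of $\PSp_4(3)$. There $\pi=\{5\}$, and an irreducible character of degree $5$ can be used directly.

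Finally, your case analysis collapses to the paper's dichotomy. If $c\mid n$ with $n/c$ odd, then $d=2c\nmid n$, so the split torus $q^n-1$ is $\pi'$ for every $c$, not only for $c=1$ (where your ``$n$ even'' alternative is vacuous anyway). Hence only the one-block tori $q^n\pm1$ are ever needed. The two-block tori, the repeated block $(q+1)^2$ at $n=2$, and the unspecified swaps of your last paragraph can all be dropped; as written, that last paragraph is not an argument.
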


\begin{proof}
Let $N=G$ and let $X$ be the corresponding inner-diagonal group.  Then $X/N$ is a $2$-group.  It suffices to construct a $\pi$-defect-zero character of $X$; Lemma~\ref{lem:two-extension} then yields a $\pi$-defect-zero constituent on $N$ with a nonzero $H$-fixed vector.

Work in the dual group of type $B_n$.  There are maximal tori whose orders contain the one-block factors $q^n-1$ and $q^n+1$, corresponding to the positive and negative signed partitions of $n$.

Suppose first that $d$ is odd.  If $t\in\pi$ divided $q^n+1$, then $q^n\equiv-1\pmod t$, forcing $e(q,t)$ to be even.  Hence the negative one-block torus is a $\pi'$-torus.

Now suppose that $d$ is even.  For $t\in\pi$,
\[
t\mid q^n+1
\quad\Longleftrightarrow\quad
n\equiv d/2\pmod d.
\]
If this congruence fails, the negative one-block torus is $\pi'$.  If it holds, then $d\nmid n$, so the positive one-block torus is $\pi'$.

These one-block tori contain regular semisimple elements: a generator has the full Frobenius orbit required by the signed cycle.  Hence the corresponding semisimple character of $X$ has $\pi$-defect zero.  Lemma~\ref{lem:two-extension} yields a $\pi$-defect-zero constituent on $N$ with an $H$-fixed vector.
\end{proof}

By Lemma~\ref{lem:defect}, Proposition~\ref{prop:symplectic} gives a constituent of $1_H^G$ whose degree is divisible by a prime in $\pi$.

\section{Orthogonal groups}

Consider now the simple orthogonal groups.  For odd-dimensional groups the dual root system is of type $C$, so the torus argument from the symplectic case applies without change.  Thus only the even-dimensional groups require consideration.

Let $n=2m$ and let $G$ be a simple quotient of an even-dimensional orthogonal group of sign $\varepsilon\in\{+,-\}$.  For $t<s$ in $\pi$ put
\[
a=e(q,t),\qquad b=e(q,s).
\]
The orthogonal part of the proof of \cite[Theorem~4]{VR02} lists exactly six possibilities.  In four of them $a=b$, with the parity of this common value and the sign of the group determining the rank inequality.  The remaining two are
\[
\varepsilon=-,\qquad a\text{ odd},\qquad b=2a,\qquad n=4a,
\]
and
\[
\varepsilon=-,\qquad b\text{ odd},\qquad a=2b,\qquad n=4b.
\]
In the latter two cases the Hall subgroup is cyclic; see the proof of \cite[Theorem~4]{VR02}.

\begin{proposition}\label{prop:orthogonal}
Let $G$ be a finite simple orthogonal group in characteristic $p$, and let $1<H<G$ with $H\in\Hall_\pi(G)$ and $2,p\notin\pi$.  Then $G$ has a $\pi$-defect-zero character.
\end{proposition}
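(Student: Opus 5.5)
We follow the pattern of Proposition~\ref{prop:symplectic}. Let $N=G$ and let $X$ be the inner-diagonal group containing $N$; for orthogonal groups $X/N$ is a $2$-group. By Lemma~\ref{lem:two-extension} it suffices to find a regular semisimple $s$ in the dual group $X^*$ with $C_{X^*}(s)$ a $\pi'$-group. The degree formula $\chi_s(1)=|X^*:C_{X^*}(s)|_{p'}$ then gives a $\pi$-defect-zero character of $X$.

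For odd dimension the dual is of type $C_m$. The argument of Proposition~\ref{prop:symplectic} applies verbatim, using the one-block tori of orders $q^m\pm1$. The orthogonal classification of \cite[Theorem~4]{VR02} again forces a common parameter $d$ in that case.

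For $n=2m$, with dual again of type $D_m^\varepsilon$, the maximal tori are parametrized by signed partitions of $m$. The sign $\varepsilon$ equals the product of the cycle signs.

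\textbf{Equal-parameter cases, $a=b=d$.} We choose a torus with two blocks: the type $(m-1)^{\pm}(1)^{\pm}$, or $(m)^{\varepsilon}$ when that is available, with signs chosen to give total sign $\varepsilon$. Its order is $(q^{m-1}\mp1)(q\mp1)$ or $q^m-\varepsilon$. We choose the signs so that no $t\in\pi$ divides either factor.

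\emph{$d$ odd.} A prime $t$ with $e(q,t)=d$ divides $q^k-1$ exactly when $d\mid k$, and never divides $q^k+1$. So we use minus blocks when $d$ is odd. The sign constraint may force one plus block, and we place it where $d\nmid k$. Since $d$ cannot divide both $m-1$ and $1$ unless $d=1$, some arrangement works.

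\emph{$d=1$.} Every $t\in\pi$ divides $q-1$, so we use only minus blocks, of lengths $(m-1,1)$ or $(m)$. Either choice is $\pi'$ because $t\nmid q^k+1$ for all $k$.

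\emph{$d$ even.} Here $t\mid q^k+1$ iff $k\equiv d/2\pmod d$, and $t\mid q^k-1$ iff $d\mid k$. We pick a partition $m=k_1+k_2$ into at most two parts, with signs matching $\varepsilon$, avoiding these residues. The rank inequalities from the classification ($2n<ds$ and the like) ensure such choices exist. When they do not, we pass to a three-block torus, as in the $(3,2,1)$ trick in the linear case.

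\textbf{Regularity.} For regularity we need the Frobenius orbits of eigenvalues to be disjoint and not closed under inversion inappropriately. Blocks of distinct lengths make this automatic. A block of length $1$ with sign $+$ requires $q>3$ or some care, and small $q$ may need separate treatment. Any component group of $C_{X^*}(s)$ is a $2$-group in types $B$, $C$, $D$, so it is harmless for odd $\pi$.

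\textbf{Mixed cases.} These are $\varepsilon=-$, $n=4a$, and $\{a,b\}=\{c,2c\}$ with $c$ odd, so $m=2c$. We need a torus of sign $-$ in rank $2c$ avoiding primes of orders $c$ and $2c$. Such primes divide $q^k-1$ iff $c\mid k$, and $q^k+1$ iff $k\equiv c\pmod{2c}$. The torus $(2c-1)^-(1)^+$ has order $(q^{2c-1}+1)(q-1)$. This is prime to $\pi$ when $c>1$ ($c$ is odd so $c\ge3$), because $2c-1\not\equiv c$ and $c\nmid 1$. Since $H$ is cyclic here, one could alternatively argue directly.

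\textbf{Main obstacle.} The hardest step is the equal-even-$d$ subcase. Matching the sign $\varepsilon$ while avoiding both residues needs careful use of the exact rank inequality from \cite{VR02}. If a two-block choice fails, I would try three blocks of distinct lengths, such as $(m-2,1,1)$ with suitable signs. Regularity for repeated length-$1$ blocks would then need separate checking.
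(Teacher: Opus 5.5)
Your reduction (inner-diagonal $X$, Lemma~\ref{lem:two-extension}, a regular $s$ in a signed-partition torus of $\pi'$-order) is the paper's route, but the even-dimensional case is not actually finished.

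In the equal-even-$d$ subcase you only assert that ``the rank inequalities ensure such choices exist'' and otherwise fall back on a three-block torus. Neither is an argument. Your sample $(m-2,1,1)$ also has two blocks of length $1$, which contradicts your own distinct-lengths criterion for regularity. In fact no rank inequality is needed; your residue criteria settle it directly:
\begin{itemize}
\item plus type: use $(m)^+$ if $d\nmid m$, and $(m-1)^+(1)^+$ if $d\mid m$;
\item minus type: use $(m)^-$ if $m\not\equiv d/2\pmod d$, and $(m-1)^-(1)^+$ otherwise.
\end{itemize}
What remains is exactly the point you left open: regularity when a positive $1$-block occurs and $q\in\{2,3\}$.

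The paper closes this with explicit substitutions:
\begin{itemize}
\item $(-(m-1),-1)$ for plus type with $d\ge4$;
\item $(-2,-(m-2))$ for $q=2$, $d=2$, $m\ne4$;
\item $(-(m-2),2)$, $(-(m-3),3)$ and $(-2,m-2)$ in the three minus-type subcases;
\item a direct \textit{Atlas} check for $P\Omega_8^+(2)$ with $\pi=\{3\}$, where the character of degree $972=4\cdot3^5$ has $3$-defect zero.
\end{itemize}
Alternatively, the roots of $D_m$ are only $\pm\epsilon_i\pm\epsilon_j$, so regularity of $\operatorname{diag}(t_1,\ldots,t_m,t_1^{-1},\ldots,t_m^{-1})$ only requires $t_i\ne t_j^{\pm1}$ for $i\ne j$. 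A single positive $1$-block with $t_m=\pm1$ is therefore harmless as long as no other $t_j$ equals $t_m$. That observation would rescue your two-block choices for all $q$, but you have to make it; as written, your argument does not cover $q\le3$.

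The mixed case has the same unresolved positive $1$-block, plus a false step. ``$c$ is odd so $c\ge3$'' ignores $c=1$, i.e.\ $n=4$ and $\Omega_4^-(q)\cong\PSL_2(q^2)$. There $(1)^-(1)^+$ fails, since the Hall prime of order $1$ divides $q-1$ and the one of order $2$ divides $q+1$. The paper avoids both problems by taking the negative one-block torus of order $q^{2c}+1$. By your own criterion it is $\pi'$ for every odd $c$, since $2c\not\equiv c\pmod{2c}$; indeed it is congruent to $2$ modulo both Hall primes. It is also regular for every $q$.
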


\begin{proof}
Let $N=G$ and let $X$ be the corresponding inner-diagonal orthogonal group.  Then $X/N$ is a $2$-group.  It suffices to construct a $\pi$-defect-zero semisimple character of $X$; Lemma~\ref{lem:two-extension} then yields a $\pi$-defect-zero constituent on $N$ with a nonzero $H$-fixed vector.

We use the signed-partition description of maximal tori in type $D_m$.  A positive part $j$ contributes $q^j-1$ and a negative part contributes $q^j+1$; the sign is determined by the parity of the number of negative parts.

First suppose that all primes in $\pi$ have the same order $d=e(q,t)$.  If $d$ is odd, every negative factor is $\pi'$.  For minus type use the negative one-part torus
\[
q^m+1.
\]
For plus type use the signed partition $(-(m-1),-1)$, whose torus has order
\[
(q^{m-1}+1)(q+1).
\]
The two negative blocks have different lengths unless $m=2$, and $D_2$ is not a simple case.

Now suppose that $d$ is even.  A positive part $j$ can contribute a prime in $\pi$ only when $d\mid j$, while a negative part $j$ can contribute one only when
\[
j\equiv d/2\pmod d.
\]
For plus type, if $d\nmid m$, use the positive one-part torus $q^m-1$.  If $d\mid m$ and $q\ge4$, use the signed partition $(m-1,1)$, giving
\[
(q^{m-1}-1)(q-1).
\]
Both factors are $\pi'$.  If $q=2$ or $3$, the $1$-block need not supply a regular eigenvalue.  When $d\ge4$ replace $(m-1,1)$ by $(-(m-1),-1)$; since $m\equiv0\pmod d$, neither negative part is congruent to $d/2$ modulo $d$.  The only remaining possibility is $q=2$ and $d=2$.  If $m=4$, then $G=P\Omega_8^+(2)\cong O_8^+(2)$.  Here $|G|_3=3^5$, while the character table in the \textit{Atlas}~\cite{Atlas85} contains an irreducible character of degree
\[
972=4\cdot3^5.
\]
In this case necessarily $\pi=\{3\}$, since the unique odd prime of multiplicative order $2$ modulo $2$ is $3$.  Thus this is a $\pi$-defect-zero character, and Lemma~\ref{lem:defect} applies.  Hence assume $m\ne4$.  Use $(-2,-(m-2))$; both exponents are even, so neither factor is divisible by the unique odd prime of order $2$ modulo $2$, namely $3$.  The case $m=2$ is nonsimple, and hence in the remaining cases the two negative blocks have distinct lengths.

For minus type, if $m\not\equiv d/2\pmod d$, use the negative one-part torus $q^m+1$.  Suppose
\[
m\equiv d/2\pmod d.
\]
If $q\ge4$, use the signed partition $(-(m-1),1)$, giving
\[
(q^{m-1}+1)(q-1).
\]
If $q=3$, then $d=2$ cannot occur for an odd prime, and we use $(-(m-2),2)$ instead.  Its torus has order
\[
(q^{m-2}+1)(q^2-1),
\]
and both factors are $\pi'$.  If $q=2$ and $d>2$, use $(-(m-3),3)$, giving
\[
(q^{m-3}+1)(q^3-1);
\]
again both factors are $\pi'$.  Finally, if $q=2$ and $d=2$, then $m$ is odd and we use $(-2,m-2)$, whose torus has order
\[
(q^2+1)(q^{m-2}-1).
\]
This is prime to $3$.  The unavailable ranks $m\le3$ are $D_2$ or $D_3$; the latter is covered by the linear and unitary low-rank isomorphisms.

There are two mixed minus-type cases left.  If $a$ is odd, $b=2a$, and $n=4a$, then $m=2a$, and the negative one-part torus has order $q^{2a}+1$.  Since both relevant multiplicative orders divide $2a$, this number is congruent to $2$ modulo both Hall primes.  The case $b$ odd, $a=2b$, and $n=4b$ is identical.

It remains to check regularity for the elements used above, apart from the case $P\Omega_8^+(2)$ just settled directly.  A generator of a negative block of length $j$ has Frobenius orbit of length $2j$, with inversion first occurring after $j$ steps.  For the positive blocks occurring in the two-block constructions, a generator has Frobenius orbit of length $j$ disjoint from its inverse orbit; the replacements for $q=2,3$ avoid the degenerate positive $1$-blocks.  The two block lengths are distinct in every remaining two-block construction, so their eigenvalue orbits are disjoint.  Thus each torus still under consideration contains a regular semisimple element.  Hence $X$ has a semisimple character of $\pi$-defect zero.  Lemma~\ref{lem:two-extension} gives a $\pi$-defect-zero constituent on $N$ with an $H$-fixed vector.
\end{proof}

\section{Hall subgroups in defining characteristic}

Suppose that $p\in\pi$ and $2\notin\pi$.  The following argument does not depend on the classical type.

\begin{proposition}\label{prop:defining}
Let $G$ be a finite simple group of Lie type in characteristic $p$, and let $H\in\Hall_\pi(G)$ with $p\in\pi$ and $2\notin\pi$.  Then the Steinberg character of $G$ is a constituent of $1_H^G$.
\end{proposition}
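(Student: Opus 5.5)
The plan has two steps: first place $H$ inside a Borel subgroup $B=UT$ of $G$, then show that $\St$ restricted to any subgroup of $B$ containing $U$ has the trivial character as a constituent.

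\emph{Locating $H$.} Since $p\in\pi$, $H$ contains a Sylow $p$-subgroup $U$ of $G$, and we may take $U$ to be the unipotent radical of a Borel subgroup $B=UT$. By Revin's classification of Hall subgroups in defining characteristic, as recalled in the introduction, a Hall $\pi$-subgroup with $p\in\pi$ lies in $N_G(U)$, which is a parabolic subgroup. The parabolic subgroups properly containing $B$ have Levi factors with a nontrivial semisimple part of Lie type in characteristic $p$, and each such part contains an $\SL_2$- or $\PSL_2$-section, hence has even order. Because $2\notin\pi$, the Hall condition then forces $H\le B$. Concretely, $H=U\rtimes T_\pi$, where $T_\pi$ is a Hall $\pi$-subgroup of the abelian group $T$. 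I expect this step to be the main obstacle: one must check that Revin's list really leaves no exotic configuration when $2\notin\pi$, including small fields where some Levi factor might be solvable, such as $\SL_2(3)$. Even so, every Levi factor of Lie type has a root subgroup pair generating a group of even order, so the parity argument should work uniformly.

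\emph{Fixed vector.} Next I use the standard facts about the Steinberg character. It satisfies $\St=\sum_{J\subseteq S}(-1)^{|J|}1_{P_J}^G$, and its restriction to $U$ is the regular character of $U$; equivalently $[\St_U,1_U]=1$. The cleaner route is to observe that $\St_B$ contains the character $\operatorname{Ind}_T^B$-style structure. Indeed, one has $\St_B=\operatorname{Ind}_T^B 1_T$ (see \cite{DM20} or Curtis' formula $\St(g)=\pm|C_G(g)|_p$ on semisimple elements and $0$ otherwise). Frobenius reciprocity and Mackey then give
\[
[\St_H,1_H]=[(\operatorname{Ind}_T^B1_T)_H,1_H]=|T\backslash B/H|\ge1,
\]
since $H\ge U$ and $B=UT$, so $TH=B$ and there is exactly one double coset. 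Alternatively, I can avoid the induced-character identity. Writing $V^U$ for the $U$-fixed space of the Steinberg module, which is one-dimensional, I note that $T$ normalizes $U$ and therefore acts on $V^U$ by a linear character. This character is trivial, because $V^U$ is spanned by the standard vector $\sum_{u\in U}u\,\epsilon$ on which $B$ acts trivially, equivalently by the known fact that $1_B$ is a constituent of $\St_B$. Either way $V^H\supseteq V^B\ne0$, and hence $\St\in\Irr(1_H^G)$ by Frobenius reciprocity.

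\emph{Degree.} Finally, $\St(1)=|G|_p$ is divisible by $p\in\pi$, and $\St$ is irreducible and nontrivial for simple $G$. This is exactly what the main assertion requires in this case.
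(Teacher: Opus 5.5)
Your proposal is correct and follows essentially the paper's route: Revin's classification, plus the fact that for odd $q$ a non-Borel parabolic has a Levi factor of even order, gives $H\le B$, and then the $B$-fixed line of the Steinberg module ($[\St,1_B^G]=1$) and $\St(1)=|G|_p$ finish the proof. Your variant via $\St_B=\operatorname{Ind}_T^B1_T$ and Mackey is valid and gives $[\St_H,1_H]=1$ exactly; note that $N_G(U)=B$ itself, so the parity argument is needed only to exclude Revin's alternative that $H$ is itself a non-Borel parabolic (containment in one would not suffice).
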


\begin{proof}
By Revin's classification of Hall subgroups in defining characteristic, a Hall $\pi$-subgroup is either contained in a Borel subgroup or is a parabolic subgroup; see \cite[Theorem~8.3]{VRsurvey} and \cite{Revin99}.  Since $p\in\pi$ and $2\notin\pi$, we have $p\ne2$, so the defining field has odd order.  If the parabolic subgroup $H$ is not a Borel subgroup, then the Lie rank is at least $2$, and $H$ contains a torus factor of even order.  This is impossible because $H$ has odd order.  Thus in either case
\[
H\le B
\]
for some Borel subgroup $B$ of $G$.

Let $\St$ be the Steinberg character of $G$.  The Steinberg representation has a one-dimensional space of $B$-fixed vectors; equivalently,
\[
[\St,1_B^G]=1;
\]
see, for example, \cite[Chapter~6]{Carter85}.  Thus
\[
0<\dim \St^B\le \dim \St^H,
\]
and Frobenius reciprocity gives
\[
[\St_H,1_H]>0.
\]
Therefore $\St\in\Irr(1_H^G)$.  Finally,
\[
\St(1)=|G|_p,
\]
so $p\mid\St(1)$.
\end{proof}

\begin{theorem}\label{thm:classical}
Let $G$ be a finite simple classical group in defining characteristic $p$, and let $1<H<G$ with $H\in\Hall_\pi(G)$ and $2\notin\pi$.  Then there exists
\[
\chi\in\Irr(1_H^G)
\]
such that $\chi(1)$ is divisible by a prime in $\pi$.
\end{theorem}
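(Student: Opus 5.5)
The statement covers two regimes, and I would handle them separately. The split is by whether the defining characteristic $p$ lies in $\pi$. The hypothesis is only $2\notin\pi$, so the regimes are $p\in\pi$ and $p\notin\pi$.

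If $p\in\pi$, Proposition~\ref{prop:defining} applies directly. It shows that the Steinberg character $\St$ is a constituent of $1_H^G$. Since $\St(1)=|G|_p$ and $G$ is simple of Lie type, hence of order divisible by $p$, we have $p\mid\St(1)$, and $p$ is a prime in $\pi$. One small point needs checking: the simple classical group must be regarded as a group of Lie type in characteristic $p$. For the small exceptional isomorphisms, such as $\PSL_2(4)\cong\PSL_2(5)$, the statement is read with the given characteristic $p$. Proposition~\ref{prop:defining} makes no assumption beyond $p\in\pi$ and $2\notin\pi$, so this causes no difficulty.

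If $p\notin\pi$, then $2,p\notin\pi$ and we split by classical type. For $G=\PSL_n(q)$ this is Theorem~\ref{thm:linear}, and for $G=\PSU_n(q)$ it is Theorem~\ref{thm:unitary}. For $\PSp_{2n}(q)$, Proposition~\ref{prop:symplectic} produces a $\pi$-defect-zero character $\chi$. Such a $\chi$ has $\chi(1)_\pi=|G|_\pi=|H|>1$, because $H\ne1$, so some prime of $\pi$ divides $\chi(1)$. Lemma~\ref{lem:defect} then gives $\chi\in\Irr(1_H^G)$. The orthogonal groups are treated in the same way by Proposition~\ref{prop:orthogonal}: the odd-dimensional ones through the type-$C$ torus argument, and the even-dimensional ones directly.

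The main point requiring care is bookkeeping rather than new mathematics. Every simple classical group must fall under one of these results, including small-rank cases handled there via isomorphisms such as $D_3\cong A_3$ and $P\Omega_8^+(2)$. In addition, $H\ne1$ is exactly what makes $\pi$-defect zero yield a degree divisible by a prime in $\pi$. I would state both explicitly and then assemble the cases.
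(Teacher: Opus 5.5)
Your proposal is correct and follows the paper's proof: the case $p\in\pi$ is Proposition~\ref{prop:defining} via the Steinberg character, and the case $p\notin\pi$ is assembled from Theorems~\ref{thm:linear} and \ref{thm:unitary} together with Propositions~\ref{prop:symplectic} and \ref{prop:orthogonal}, using Lemma~\ref{lem:defect}. Your explicit observation that a $\pi$-defect-zero character satisfies $\chi(1)_\pi=|H|>1$ is exactly the step the paper records in its remark after Proposition~\ref{prop:symplectic}.
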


\begin{proof}
If $p\notin\pi$, the linear and unitary groups are Theorems~\ref{thm:linear} and \ref{thm:unitary}, while the symplectic and orthogonal groups follow from Propositions~\ref{prop:symplectic} and \ref{prop:orthogonal}.  If $p\in\pi$, apply Proposition~\ref{prop:defining}.
\end{proof}

\section{Hall subgroups containing $2$}

Assume that $2\in\pi$.  The elementary observation below will be used repeatedly.

\begin{lemma}\label{lem:overgroup}
Let $H\le M\le G$.  Then every irreducible constituent of $1_M^G$ is a constituent of $1_H^G$.
\end{lemma}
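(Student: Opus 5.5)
The plan is to apply Frobenius reciprocity twice, using transitivity of induction. Let $\chi\in\Irr(G)$ be a constituent of $1_M^G$, so that $[\chi,1_M^G]>0$.

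First, by Frobenius reciprocity,
\[
[\chi_M,1_M]=[\chi,1_M^G]>0,
\]
which means $\chi$ has a nonzero $M$-fixed vector. Since $H\le M$, every $M$-fixed vector is also $H$-fixed, so $\dim\chi^H\ge\dim\chi^M>0$. Equivalently, $[\chi_H,1_H]>0$, and a second application of Frobenius reciprocity gives $[\chi,1_H^G]=[\chi_H,1_H]>0$.

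Alternatively, one can argue with characters directly. By transitivity of induction, $1_H^G=(1_H^M)^G$. Since $1_M$ is a constituent of $1_H^M$ (with multiplicity $[1_H^M,1_M]=[1_H,1_H]=1$), we can write $1_H^M=1_M+\theta$ for some character $\theta$ of $M$, which is either zero or a genuine character. Inducing gives
\[
1_H^G=1_M^G+\theta^G,
\]
a sum of two characters. Hence every irreducible constituent of $1_M^G$ appears in $1_H^G$ with at least the same multiplicity.

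This argument has no real obstacle. The only point to state explicitly is that $\theta^G$ is a character, not a virtual character, so no cancellation between $1_M^G$ and $\theta^G$ can occur.
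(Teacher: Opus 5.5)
Your proof is correct. Your second argument (writing $1_H^M=1_M+\theta$ and inducing to $G$) is exactly the paper's argument, and your first, fixed-vector version is the character-theoretic counterpart of the paper's pullback embedding $\mathbb{C}[G/M]\hookrightarrow\mathbb{C}[G/H]$.
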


\begin{proof}
The natural surjection $G/H\to G/M$ gives, by pullback of functions, an injective $G$-module map
\[
\mathbb C[G/M]\hookrightarrow \mathbb C[G/H].
\]
Thus every irreducible constituent of the permutation module on $G/M$ also occurs in the permutation module on $G/H$.

Equivalently, by Frobenius reciprocity,
\[
[(1_H)^M,1_M]_M=[1_H,1_H]_H=1.
\]
Hence the principal character occurs exactly once in $(1_H)^M$, and we may write
\[
(1_H)^M=1_M+\Theta
\]
for a character $\Theta$ of $M$.  (If $H=M$, then $\Theta=0$; if $H<M$, then
$\Theta(1)=[M:H]-1>0$.)  By transitivity of induction,
\[
(1_H)^G=((1_H)^M)^G=(1_M)^G+\Theta^G.
\]
Therefore every irreducible constituent of $1_M^G$ is a constituent of $1_H^G$.
\end{proof}

\subsection{The defining-characteristic case}

Revin and Vdovin summarize the defining-characteristic classification needed here in the proof of \cite[Lemma~8.1]{RV10}.  Apart from the trivial case in which $\pi(G)\subseteq\pi$, a proper Hall subgroup falls into one of three types: it is contained in a Borel subgroup; it is the special orthogonal parabolic in characteristic $2$; or $G=\PSL_n(q)$ and it is one of the listed flag parabolics.

\begin{proposition}\label{prop:defining-even}
Let $G$ be a finite simple classical group in characteristic $p$, and let
\[
1<H<G,\qquad H\in\Hall_\pi(G),\qquad p,2\in\pi.
\]
Then $1_H^G$ has a nonprincipal irreducible constituent whose degree is divisible by $p$.
\end{proposition}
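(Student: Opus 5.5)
We follow the three-type summary recalled just before the statement, from the proof of \cite[Lemma~8.1]{RV10}. Since $1<H<G$, we are not in the trivial case $\pi(G)\subseteq\pi$. Hence $H$ is contained in a Borel subgroup, or is the special orthogonal parabolic in characteristic $2$, or $G=\PSL_n(q)$ and $H$ is one of the listed flag parabolics. In every case the goal is a nonprincipal constituent of $1_H^G$ of $p$-divisible degree. The basic tool is Lemma~\ref{lem:overgroup}: if $H\le M$, every constituent of $1_M^G$ lies in $\Irr(1_H^G)$.

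\emph{Borel case.} If $H\le B$, the argument of Proposition~\ref{prop:defining} applies verbatim. Since $[\St,1_B^G]=1$, Lemma~\ref{lem:overgroup} with $M=B$ puts $\St$ in $\Irr(1_H^G)$. Moreover $\St(1)=|G|_p>1$, so $\St$ is nonprincipal and $p\mid\St(1)$. The oddness hypothesis from Proposition~\ref{prop:defining} was used only to force $H\le B$; here that containment is given.

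\emph{Parabolic cases.} Now $H=P_J$ is a proper parabolic subgroup with $J\subsetneq S$. The permutation character decomposes via the Hecke algebra:
\[
1_{P_J}^G=\sum_{\phi\in\Irr(W)}[\phi,\operatorname{Ind}_{W_J}^W 1]\,\rho_\phi,
\]
where $\rho_\phi$ is the principal-series unipotent character labelled by $\phi$. We take $\rho_{\mathrm{sgn}}=\St$ whenever $\mathrm{sgn}$ occurs in $\operatorname{Ind}_{W_J}^W1$. That happens only for $J=\varnothing$, which is the Borel case, so for a genuine parabolic we need a different nonprincipal $\phi$. The plan is to use the reflection-type constituent, i.e.\ $\phi$ equal to the reflection representation or one of its small relatives. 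For $J\ne S$ this $\phi$ occurs in $\operatorname{Ind}_{W_J}^W1$, because the reflection representation restricted to $W_J$ has a nonzero fixed vector. By the degree formula, the associated unipotent character $\rho_\phi$ has degree divisible by $q^{a_\phi}$ with $a_\phi\ge1$ for every nontrivial $\phi$. This is because the generic degree of a nontrivial unipotent character has no constant term in $q$, since only $1_G$ has $a$-value $0$. Hence $p\mid\rho_\phi(1)$.

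For the concrete checks we use explicit formulas. In $\PSL_n(q)$ with a flag parabolic of type $(n_1,\dots,n_k)$, $k\ge2$, the character $\chi^{(n-1,1)}$ occurs in $1_{P}^{\GL_n(q)}$ and has degree $q(q^{n-1}-1)/(q-1)$, which is divisible by $p$. It is trivial on scalars and restricts irreducibly to $\SL_n(q)$, as in the Remark after Theorem~\ref{thm:linear}. For the orthogonal parabolic in characteristic $2$, we take $M=H$ to be a maximal parabolic, say the stabilizer of a singular point or of a maximal totally singular subspace. For the point stabilizer we use the rank-three decomposition $1+\alpha+\beta$ on singular points, whose nontrivial degrees are visibly divisible by $q$. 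For a maximal totally singular subspace stabilizer we use instead the constituent of $1_P^G$ labelled by the reflection-type character of $W(B_m)$ or $W(D_m)$, whose degree has the factor $q$.

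\emph{Main obstacle.} The hardest part will be pinning down exactly which parabolics appear in the RV10 list, so that each case is genuinely covered. Particular care is needed in low rank and for $\PSL_2(q)$, where $\St$ is the only nonprincipal constituent of $1_B^G$; that case is fine because $H=B$ there. The second delicate point is justifying that the chosen unipotent constituent descends to the simple quotient. This follows because unipotent characters have the centre in their kernel, by the central-quotient compatibility \cite[Propositions~11.3.10 and~11.4.9]{DM20} cited earlier.
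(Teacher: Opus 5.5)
Your argument in the three cases that actually occur is the paper's proof. In the Borel case you use $\St$ together with Lemma~\ref{lem:overgroup}. For the special orthogonal parabolic in characteristic $2$ you use the rank-three action on singular points of $P\Omega_{2n}^{\eta}(q)$; both nonprincipal degrees are divisible by $q$ because $q^2-1$ is odd. For the flag parabolics of $\PSL_n(q)$ you use $\chi^{(n-1,1)}$, obtained from $1_{P_d}^G$ with $H\le P_d$.

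You should, however, delete the general Hecke-algebra step, because its key claim is false at bad primes. That claim says every nontrivial principal-series unipotent character $\rho_\phi$ satisfies $p\mid\rho_\phi(1)$ since $q^{a_\phi}$ divides the degree. The generic degree has the form $q^{a_\phi}f(q)/n_\phi$ with $f(q)$ prime to $p$. In types $B$ and $C$ the integer $n_\phi$ can be even, so when $p=2$ it can cancel the factor $q^{a_\phi}$. A concrete counterexample is the simple group $\Sp_6(2)\cong\Omega_7(2)$: its permutation character on the $63$ points of the natural module is $1+\chi_{27}+\chi_{35}$. Both nonprincipal constituents have odd degree, including the reflection-type one.

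So your treatment of the hypothetical stabilizer of a maximal totally singular subspace is not justified as written. In types $D$ and ${}^2D$ the reflection-type degree does turn out to be divisible by $q$, but only the explicit degree formula shows this. This does not damage the proof, because the special orthogonal alternative in \cite[Lemma~8.1]{RV10} is the singular-point stabilizer, and your rank-three argument handles it exactly as the paper does. Commit to that list and drop the general claim.
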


\begin{proof}
Suppose first that $H$ is contained in a Borel subgroup $B$.  Then the Steinberg character satisfies
\[
[\St,1_B^G]=1,
\]
and hence $\St$ is a constituent of $1_H^G$ by Lemma~\ref{lem:overgroup}.  Since $\St(1)=|G|_p$, this gives the result.

Next suppose that the special orthogonal alternative occurs.  Then $p=2$ and
\[
G=P\Omega_{2n}^{\eta}(q),
\]
while $H$ is the maximal parabolic stabilizing a singular $1$-space.  The action on singular $1$-spaces has rank three,
\[
1_H^G=1+\alpha+\beta.
\]
Since the action is transitive of rank three, $\alpha$ and $\beta$ are distinct nonprincipal irreducible characters.
The multiplicities are as follows.  For plus type,
\[
\alpha(1)=\frac{q(q^{n-2}+1)(q^n-1)}{q^2-1},
\qquad
\beta(1)=\frac{q^2(q^{2n-2}-1)}{q^2-1},
\]
while for minus type,
\[
\alpha(1)=\frac{q^2(q^{2n-2}-1)}{q^2-1},
\qquad
\beta(1)=\frac{q(q^{n-2}-1)(q^n+1)}{q^2-1}.
\]
These are obtained from the usual parameters of the orthogonal polar graph.  Since $q$ is even, both nonprincipal degrees are divisible by $q$, and hence by $p=2$.

Finally suppose that $G=\PSL_n(q)$ and $H$ is one of the flag parabolics in the defining-characteristic list.  Let $d$ be the dimension of a proper subspace occurring in the flag and let $P_d$ be its stabilizer.  Then $H\le P_d$.  The Grassmann permutation character $1_{P_d}^G$ contains the unipotent character labelled by $(n-1,1)$, whose degree is
\[
q\cdot\frac{q^{n-1}-1}{q-1}.
\]
It is therefore divisible by $p$, and Lemma~\ref{lem:overgroup} completes the proof.
\end{proof}

\begin{remark}\label{rem:defining-list}
In the notation of \cite[Lemma~8.1]{RV10}, the proof above treats Cases~2--4 simultaneously.  Case~1 has $\pi(G)\subseteq\pi$ and hence has no proper Hall $\pi$-subgroup.
\end{remark}

\subsection{The case $2,3\in\pi$ and $p\notin\pi$}

Assume now that $p\notin\pi$ and $2,3\in\pi$.  Revin--Vdovin give a decomposition-theoretic description of the Hall subgroups in the classical groups; see \cite[Lemmas~4.3, 4.4 and 6.7]{RV10}.  We begin with the symplectic groups.

\begin{proposition}\label{prop:even-sp}
Let $G=\PSp_{2n}(q)$ be simple, with $q$ odd, and let $H\in\Hall_\pi(G)$, where $2,3\in\pi$ and $p\notin\pi$.  Then $1_H^G$ has an even-degree irreducible constituent.
\end{proposition}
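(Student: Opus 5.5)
The plan is to use the Revin--Vdovin description of Hall $\{2,3\}\cup\ldots$-subgroups of symplectic groups in nondefining characteristic \cite[Lemmas~4.3, 4.4 and 6.7]{RV10}. Since $q$ is odd and $3\in\pi$, a Hall $\pi$-subgroup of $\Sp_{2n}(q)$ is contained in the stabilizer of an orthogonal decomposition of the natural module into nondegenerate $2$-dimensional subspaces, i.e.\ in $M=\Sp_2(q)\wr S_n$ (modulo scalars in $G$). When $n=1$, $G=\PSL_2(q)$ is handled directly, as indicated below. By Lemma~\ref{lem:overgroup}, it therefore suffices to show that $1_{M}^{G}$, with $M$ now denoting the image of this wreath product in $G$, has an irreducible constituent of even degree. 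Equivalently, we need an even-degree $\chi\in\Irr(G)$ with $\chi^M\neq0$.

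For the main case $n\ge2$, I would compare $1_M^G$ with a rank-three permutation character whose degrees are easy to read off. The natural choice is the action of $G$ on nondegenerate $2$-spaces, or on the points of the projective space, which is rank three with subdegrees $1$, $q(q^{2n-2}-1)/(q-1)$ and $q^{2n-1}$. Its nonprincipal constituents have degrees
\[
\tfrac12\,\frac{q(q^n+1)(q^{n-1}-1)}{q-1}\quad\text{and}\quad \tfrac12\,\frac{q(q^n-1)(q^{n-1}+1)}{q-1}.
\]
Of these, exactly one is even, depending on $q \bmod 4$ and the parity of $n$, and some such parity must be verified. I would then run the argument of Lemma~\ref{lem:unitary-rank3}. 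Let $f$ be the indicator function of a suitable $M$-orbit on points, for instance the points lying in one of the decomposition $2$-spaces. Computing the adjacency operator on $f$ at points of two different support types, such as support $2$ versus support $4$, shows that $Af$ is not constant off that orbit. Hence $f$ projects nontrivially onto both nonprincipal eigenspaces. Consequently both rank-three constituents have nonzero $M$-fixed vectors, and in particular the even one does.

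The case $n=1$, $G=\PSL_2(q)$, goes separately. Here $H$ lies in a dihedral subgroup or in $A_4$/$S_4$, and one takes, for example, the Steinberg character of degree $q$ (which is odd, so it does not help) or one of the characters of degree $q\pm1$ (which are even). Fixed vectors for these are checked by a direct character-value computation on the dihedral overgroup $D_{q\pm1}$, which is standard.

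I expect the main obstacle to be the parity bookkeeping together with the small-$q$ exceptions in the orbit-counting step, as happened for $(q,n)=(3,5)$ in the unitary case. If for some small parameters the even constituent fails to be detected, I would fall back on a different permutation character containing $M$'s overgroup, for instance the stabilizer of a nondegenerate $2$-space, $\Sp_2(q)\times\Sp_{2n-2}(q)$. I would then use the rank-three (or low-rank) decomposition of that action, checking the remaining cases against the \emph{Atlas}.
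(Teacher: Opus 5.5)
Your route is the paper's: $H\le M$, the image of $\Sp_2(q)\wr S_n$; the rank-three action on projective points; $f$ the indicator of the points lying in a single block; an adjacency computation; then Lemma~\ref{lem:overgroup}. Two small corrections first. The action on nondegenerate $2$-spaces is not rank three, and the subdegrees you quote are those of the point action. Also, the parity does not depend on $q \bmod 4$: for odd $q$ the constituent of degree $q(q^{n-1}+1)(q^n-1)/(2(q-1))$ is even exactly when $n$ is even, and the other exactly when $n$ is odd.

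The genuine gap is the adjacency step for small $n$. For the perpendicularity graph, a point $x$ of block-support $k\ge2$ has $(Af)(x)=n(q+1)-kq$. So comparing supports $2$ and $4$ needs $n\ge4$; for $n=3$ you must compare supports $2$ and $3$. For $n=2$ there is no second support type at all. Then $M$ has exactly two orbits on points, so $\dim(\mathbb{C}X)^M=2$ and exactly \emph{one} nonprincipal constituent has an $M$-fixed vector. Your conclusion that both do is false there, and you have to determine which one. Since $Af=(q-1)f+2\cdot\mathbf1$, the vector $f-\frac{2}{q^2+1}\mathbf1$ lies in the $(q-1)$-eigenspace, which affords the constituent of degree $q(q+1)^2/2$. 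That degree is even, so $\PSp_4(q)$ survives, but only after this check; this is the content of the paper's sentence on $n=2$.

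Your fallback would not rescue this case. The stabilizer $\Sp_2(q)\times\Sp_{2n-2}(q)$ of a nondegenerate $2$-space is not an overgroup of $M$; for $n=2$ it is the index-$2$ base group. Moreover $|\Sp_{2n}(q):\Sp_2(q)\times\Sp_{2n-2}(q)|_2=2^{v_2(n)}$. So for even $n$ it contains no Sylow $2$-subgroup of $G$, hence no conjugate of $H$ (as $2\in\pi$), and Lemma~\ref{lem:overgroup} cannot be applied to it.

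Finally, for $n=1$ the Hall subgroup can be $A_4$, $S_4$ or $A_5$. None of these lies in a dihedral group, so a computation on a dihedral overgroup does not cover them. You need the character-value computation on $H$ itself, as in Proposition~\ref{prop:even-A1}. There, for $(q,H)=(7,S_4)$ and $(11,A_5)$, no character of degree $q+1$ has an $H$-fixed vector, and you must use the constituent of degree $q-1$ instead.
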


\begin{proof}
By \cite[Lemma~4.4]{RV10}, $H$ is contained in the image $M$ of
\[
\Sp_2(q)\wr S_n.
\]
Let $G$ act on the projective points of its natural symplectic module, and let $P$ be a point stabilizer.  This is a rank-three action,
\[
1_P^G=1+\alpha+\beta,
\]
with
\[
\alpha(1)=\frac{q(q^{n-1}+1)(q^n-1)}{2(q-1)},\qquad
\beta(1)=\frac{q(q^{n-1}-1)(q^n+1)}{2(q-1)}.
\]
For odd $q$, $\alpha(1)$ is even when $n$ is even and $\beta(1)$ is even when $n$ is odd.

The subgroup $M$ preserves the decomposition into $n$ nondegenerate $2$-spaces.  The support of a point is the set of blocks on which a representative has nonzero component.  Let $O_1$ be the set of projective points supported in one block, put $f=1_{O_1}$, and let $A$ be the adjacency operator of the rank-three graph.  Thus $(Af)(x)$ is the number of neighbors of $x$ lying in $O_1$.  If $x$ has support $k\ge2$, a direct count gives
\[
(Af)(x)=n(q+1)-kq.
\]  Hence $Af$ takes at least two values outside $O_1$ when $n\ge3$.  Since $A$ has distinct eigenvalues on the $\alpha$- and $\beta$-spaces, both $\alpha^M$ and $\beta^M$ are nonzero.  For $n=2$ the same calculation gives $\alpha^M\ne0$, and $\alpha(1)$ is even.  Lemma~\ref{lem:overgroup} now gives the result.
\end{proof}

Consider the orthogonal decomposition families.  Write $\varepsilon=1$ if $q\equiv1\pmod4$ and $\varepsilon=-1$ if $q\equiv-1\pmod4$.  The decomposition cases in \cite[Lemma~6.7(a)--(e)]{RV10} place the Hall subgroup in a stabilizer built from repeated nondegenerate $2$-spaces of type $\varepsilon$, together with a residual orthogonal summand.  Cases (f)--(h) require separate arguments below.

\begin{proposition}\label{prop:even-orth-large}
Let $G$ be a simple orthogonal group over a field of odd order $q$, and let $H\in\Hall_\pi(G)$ with $2,3\in\pi$ and $p\notin\pi$.  If $H$ is one of the decomposition-type Hall subgroups in \cite[Lemma~6.7(a)--(e)]{RV10}, then $1_H^G$ has an even-degree irreducible constituent.
\end{proposition}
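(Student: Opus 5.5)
By Lemma~\ref{lem:overgroup} it is enough to find a subgroup $M\ge H$ and an irreducible constituent of $1_M^G$ of even degree. For $M$ I take the stabilizer of the decomposition in \cite[Lemma~6.7(a)--(e)]{RV10}. Thus $V=W_1\perp\cdots\perp W_k\perp U$, where each $W_i$ is a nondegenerate $2$-space of type $\varepsilon$ and $U$ is the residual summand, of dimension at most $2$. I compare $M$ with the rank-three action of $G$ on singular points. This mirrors Proposition~\ref{prop:even-sp}, with the symplectic point graph replaced by the orthogonal polar graph.

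\emph{The characters.} Let $P$ be the stabilizer of a singular $1$-space, so that
\[
1_P^G=1+\alpha+\beta.
\]
This holds in all types, except that in odd dimension over odd $q$ one must use the same standard parameters. Writing $\dim V=2m$ or $2m+1$, the degrees are given by the orthogonal polar-graph multiplicities. For plus type they are
\[
\frac{q(q^{m-2}+1)(q^m-1)}{q^2-1}
\quad\text{and}\quad
\frac{q^2(q^{2m-2}-1)}{q^2-1},
\]
with the analogous minus-type and odd-dimensional formulas. Since $q$ is odd, I will check that at least one of $\alpha(1)$, $\beta(1)$ is even in each case. For instance, $q^2(q^{2m-2}-1)/(q^2-1)=q^2\sum_{i<m-1}q^{2i}$ is even exactly when $m-1$ is even. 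When this term is odd, the other degree $q(q^{m-2}+1)(q^m-1)/(q^2-1)$ has even $2$-part via LTE: $m$ is even, $q^m-1$ has $2$-valuation at least $v_2(q^2-1)$, and $q^{m-2}+1$ is even. The odd-dimensional and minus cases are handled the same way with a small parity bookkeeping. It therefore suffices to show that both $\alpha^M\ne0$ and $\beta^M\ne0$, or at least that the even-degree one has an $M$-fixed vector.

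\emph{Fixed vectors.} Let $A$ be the adjacency operator of the collinearity graph on singular points, and let $O_1$ be the $M$-orbit of singular points lying inside a single block $W_i$. A block of type $\varepsilon$ is hyperbolic when $q\equiv\varepsilon\pmod 4$, so it contains singular points. In fact it always contains exactly two singular points, since "type $\varepsilon$" is chosen to make $W_i$ a hyperbolic plane. Put $f=1_{O_1}$. For a singular point $x$ whose components in the blocks $W_i$ have support $S$, a singular point $y\in W_j$ is perpendicular to $x$ iff $j\notin S$ or the component of $x$ in $W_j$ is orthogonal to $y$. This gives a formula of the form
\[
(Af)(x)=2(k-|S|)+c(x),
\]
where $c(x)$ counts the blocks $j\in S$ whose component is itself singular. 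If $(Af)(x)$ takes at least two values on $\Omega\setminus O_1$, then, exactly as in Lemma~\ref{lem:unitary-rank3}, $f$ has nonzero projection onto both nonprincipal eigenspaces. Hence $\alpha^M\ne0$ and $\beta^M\ne0$. To get two such values I exhibit singular points of support $2$ and of support $3$, using $q\ge3$ to solve $Q(w_1)+Q(w_2)=0$ and $Q(w_1)+Q(w_2)+Q(w_3)=0$ with nonsingular components. Their counts differ by $2$.

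\emph{Main obstacle.} The hard part is small $k$, where $\Omega\setminus O_1$ may carry too few supports. Examples are $k\le 2$, or singular points that must meet the residual summand $U$. There I would still show that at least the even-degree constituent is $M$-fixed, as in the $n=2$ and $(q,n)=(3,5)$ cases before: compute $Af$ explicitly to get a relation $Af=\lambda f+\mu\mathbf1$, and identify $\lambda$ as the eigenvalue attached to the even-degree character. Remaining tiny groups, such as $P\Omega_7(3)$ and $P\Omega_8^\pm(3)$ with very few blocks, would be checked against the \emph{Atlas} if the general count degenerates.
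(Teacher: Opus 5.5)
Your framework matches the paper's. You pass to the decomposition stabilizer $M$ via Lemma~\ref{lem:overgroup}, use the rank-three action on singular points, and show $Af\notin\langle\mathbf 1,f\rangle$ for the indicator $f$ of an $M$-stable set. In the hyperbolic case your count $(Af)(x)=2(k-|S|)+c(x)$ is exactly the one the paper uses. The gap is your assertion that a block of type $\varepsilon$ is always a hyperbolic plane. The sign $\varepsilon$ is fixed by $q\equiv\varepsilon\pmod 4$, and type $\varepsilon$ means plus type for $\varepsilon=1$ but minus type for $\varepsilon=-1$. This choice is forced: $O_2^{\varepsilon}(q)$ is dihedral of order $2(q-\varepsilon)$, and only $4\mid q-\varepsilon$ lets the stabilizer contain a Sylow $2$-subgroup. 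Hence for $q\equiv-1\pmod 4$ (here $q\equiv-1\pmod{12}$) every $W_i$ is anisotropic. Your orbit $O_1$ is then empty, $f=0$, and the argument gives nothing. Your ``main obstacle'' paragraph does not repair this. This is an infinite family in every rank, not a small-$k$ degeneracy, and no relation $Af=\lambda f+\mu\mathbf 1$ can be extracted from $f=0$.

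The missing idea is a different test set. The paper takes $O$ to be the singular points whose block-support has size exactly two, i.e.\ two anisotropic components of opposite norms and zero residual component. An anisotropic plane represents each nonzero value $q+1$ times, and $A_i\perp A_j$ is a plus-type $4$-space. From these facts, a point $x$ with block-support of size $s$ satisfies
\[
(Af)(x)=(q+1)\Bigl[\tbinom{m-s}{2}(q+1)+s(m-s)+\tbinom{s}{2}\Bigr]+q\,z(x),
\]
where $z(x)$ counts pairs of components with opposite norms. For $m\ge4$ blocks, compare a support-$3$ point with a support-$4$ point with norms $a,-a,b,-b$ ($b\ne\pm a$); their values differ by $q\bigl((q+1)(m-4)-2\bigr)\neq0$. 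For $m=3$ you must use the residual summand. Compare a support-$3$ point (value $3(q+1)$) with a point having one block component and a nonzero residual component of opposite norm (value $(q+1)(q+3)$).

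Two smaller remarks. In the hyperbolic case, two support-$2$ points already give the distinct values $2k-2$ and $2k-4$ (both components singular, resp.\ both nonsingular), so $k\ge2$ suffices rather than $k\ge3$. Also, the groups $P\Omega_7(3)$ and $P\Omega_8^{\pm}(3)$ you propose to check cannot occur, since $3\in\pi$ and $p\notin\pi$ force $p\ge5$. The genuinely small cases are instead removed by $\Omega_3(q)\cong\PSL_2(q)$, $\Omega_4^-(q)\cong\PSL_2(q^2)$, $\Omega_5(q)\cong\PSp_4(q)$ and $P\Omega_6^{\pm}(q)\cong\PSL_4^{\pm}(q)$.
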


\begin{proof}
Let $\Omega$ be the singular $1$-spaces, let $P$ be a point stabilizer, and let $A$ be the adjacency operator of the corresponding rank-three graph.  Then
\[
1_P^G=1+\alpha+\beta.
\]
The rank-three decomposition and the two nonprincipal multiplicities are those of the corresponding orthogonal action in \cite[Theorem~1.1]{KL82}.  Substitution in the standard multiplicity formulas for a rank-three graph shows that, for odd $q$, exactly one of $\alpha(1)$ and $\beta(1)$ is even.  Let $M$ be the decomposition stabilizer containing $H$, and let $m$ be the number of repeated nondegenerate $2$-spaces in the decomposition.  The support of a point means the set of repeated $2$-space components on which it is nonzero.  It is therefore enough to show that both $\alpha$ and $\beta$ have nonzero $M$-fixed vectors.

Suppose first that $q\equiv1\pmod{12}$.  The repeated $2$-spaces are hyperbolic.  Let $O$ be the set of singular points contained in one repeated block and put $f=1_O$.  If $x$ has two nonzero singular components, then
\[
(Af)(x)=2m-2,
\]
whereas a singular point $y$ with two anisotropic components of opposite norm satisfies
\[
(Af)(y)=2m-4.
\]
Both points lie outside $O$.  Thus $Af\notin\langle 1,f\rangle$, and the two nonprincipal eigenspace projections of $f$ are both nonzero.  Hence $\alpha^M,\beta^M\ne0$.

Now suppose that $q\equiv-1\pmod{12}$.  The repeated $2$-spaces are anisotropic.  Let $O$ be the set of singular points with support exactly two in the repeated blocks.  For a singular point $x$ with support $s$, put
\[
z(x)=\#\{\{i,j\}:Q(x_i)+Q(x_j)=0\}.
\]
A block-pair count gives
\[
(Af)(x)=(q+1)\left[\binom{m-s}{2}(q+1)+s(m-s)+\binom{s}{2}\right]+qz(x).
\]
If there are at least four repeated blocks, choose a singular point of support three, for which $z(x)=0$, and another of support four with component norms $a,-a,b,-b$, where $b\ne\pm a$; then $z(y)=2$.  Their adjacency counts differ by
\[
q\bigl(mq+m-4q-6\bigr),
\]
which is nonzero in the relevant range.

It remains to handle the case of three repeated anisotropic blocks.  With three repeated anisotropic blocks, write
\[
V=A_1\perp A_2\perp A_3\perp W.
\]
Choose a singular point $x$ with nonzero components in all three $A_i$, zero $W$-component, and component norms $a,b,c\ne0$ satisfying $a+b+c=0$.  Then $s=3$, $z(x)=0$, and
\[
(Af)(x)=3(q+1).
\]
Choose $w\in W$ with nonzero quadratic value.  Such a vector exists in the odd-dimensional row, where $W$ is nonsingular of dimension $1$, and in the even-dimensional mixed row, where the residual nondegenerate $2$-space, of either type, represents a nonzero value.  Since an anisotropic repeated $2$-space represents every nonzero field element, we may choose a component in one repeated block with the opposite quadratic value.  The resulting point $y$ is singular and has one nonzero repeated-block component and a nonzero $W$-component.  Then $s=1$, $z(y)=0$, and
\[
(Af)(y)=(q+1)(q+3).
\]
Thus $Af\notin\langle1,f\rangle$.  The low-dimensional cases follow from the standard isomorphisms.  By \cite[Lemma~5.1 and its proof]{RV10},
\[
\Omega_3(q)\cong\PSL_2(q),\qquad
\Omega_4^-(q)\cong\PSL_2(q^2),\qquad
\Omega_5(q)\cong\PSp_4(q),
\]
and
\[
P\Omega_6^\eta(q)\cong\PSL_4^\eta(q).
\]
These are therefore covered by the rank-one, symplectic, and linear or unitary cases already proved.  The plus-type group in dimension $4$ is of type $D_2=A_1+A_1$ and is not a nonabelian simple classical group, so it does not occur in the present theorem.  Thus no additional low-dimensional orthogonal case remains.

Finally, in case \cite[Lemma~6.7(d)]{RV10} the dimension is $11$, while in case \cite[Lemma~6.7(e)]{RV10} it is $12$ of minus type.  In both cases the stabilizer has four repeated $2$-spaces of type $\varepsilon$; the remaining summand is a sum of nonsingular $1$-spaces.  The preceding support calculation applies with all residual components set equal to zero.  Hence in every decomposition case (a)--(e), both nonprincipal rank-three constituents have fixed vectors, and the even one occurs in $1_H^G$ by Lemma~\ref{lem:overgroup}.
\end{proof}

\subsection{The groups $\PSL_2(q)$}

The group $\PSL_2(q)$ is not covered by the decomposition lemmas for
$\SL_n(q)$ with $n>2$, so we treat it separately.  Put
\[
\varepsilon=\varepsilon(q)\in\{1,-1\},\qquad q\equiv\varepsilon\pmod4.
\]
Thus the maximal torus whose normalizer contains a Sylow $2$-subgroup has
order $(q-\varepsilon)/2$ in $G=\PSL_2(q)$ and its normalizer has order
$q-\varepsilon$.

\begin{proposition}\label{prop:even-A1}
Let $G=\PSL_2(q)$ be simple, let $p\notin\pi$, and suppose that
$2\in\pi$.  If $H\in\Hall_\pi(G)$ is proper, then $1_H^G$ has an
even-degree irreducible constituent.
\end{proposition}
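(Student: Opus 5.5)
The plan is to use the classification of Hall $\pi$-subgroups of $\PSL_2(q)$ with $2\in\pi$ and $p\notin\pi$. By Dickson's subgroup list, a proper Hall subgroup $H$ with $p\notin\pi$ has order dividing $q^2-1$ up to the factor $(2,q-1)$. It contains a Sylow $2$-subgroup, so it lies in one of the following, with $\varepsilon=\varepsilon(q)$: the dihedral normalizer $D$ of order $q-\varepsilon$; a subgroup isomorphic to $A_4$, $S_4$ or $A_5$; or (when $q$ is even) a dihedral group of order $2(q\pm1)$. For $q$ even, $p=2$, so $2\in\pi$ contradicts $p\notin\pi$. Hence $q$ is odd, and the Hall subgroup is one of: $D$ of order $q-\varepsilon$; $A_4$ (when $\pi=\{2,3\}$ and $|G|_{\{2,3\}}=12$); $S_4$; or $A_5$ (when $|G|_{\{2,3,5\}}=60$). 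This is Revin--Vdovin's list for $\PSL_2(q)$ (cf.\ the $2\in\pi$ part of \cite{RV10}). I will first extract it precisely, including the exceptional orders for which $A_4$, $S_4$, $A_5$ are genuinely Hall.

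In each case I will exhibit an even-degree constituent, choosing an overgroup where convenient and applying Lemma~\ref{lem:overgroup}. The irreducible degrees of $\PSL_2(q)$, $q$ odd, are $1$, $q$, $q\pm1$, and $(q\pm1)/2$. The characters of degree $q+1$ and $q-1$ are even.

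For $H=D$, the dihedral torus normalizer of order $q-\varepsilon$, I will compute $1_D^G$ directly. It has degree $q(q+\varepsilon)/2$. Using Frobenius reciprocity and the character table (the values on involutions and on the torus $T$), a standard computation shows that $1_{N(T)}^G$ contains some characters of degree $q-\varepsilon'$ or $q+\varepsilon'$ with multiplicity roughly $(q\mp\cdots)/4$. Concretely, a principal-series character $\chi$ of degree $q+1$ restricted to the split-torus normalizer has trivial constituent exactly when the defining character of $T$ is trivial on... ; rather than grind this, I will argue by degrees. The constituents of $1_D^G$ other than $1$ have degrees in $\{q,q\pm1,(q\pm1)/2\}$. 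Since $[1_D^G,1_D^G]$ equals the number of $(D,D)$-double cosets, which is greater than $2$ for $q>5$, $1_D^G$ is not $1+{}$(one character). If every constituent had odd degree, each would be $q$ or one of $(q\pm1)/2$ when that number is odd. I will rule this out by bounding the multiplicities: $[\St_D,1_D]$ and $[\xi_D,1_D]$ for $\xi$ of degree $(q\pm1)/2$ are at most small constants, computed from the character values on $D$. For large $q$ this gives total degree far below $q(q+\varepsilon)/2$. Small $q$ I will check by hand.

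For $H\in\{A_4,S_4,A_5\}$ I will instead use the fact that $|G:H|$ is even in the relevant cases, or else compute $[\chi_H,1_H]$ for $\chi$ of degree $q\pm1$ via $|H|^{-1}\sum_h\chi(h)$. The character values on elements of orders $2,3,4,5$ are $0,\pm1,\pm2$, or $\pm 2\cos$, so the sum is dominated by $\chi(1)=q\pm1$ once $q\pm1>$ roughly $|H|$. This forces a positive multiplicity for some even-degree character. The finitely many small $q$ (such as $q\le 59$) satisfying the Hall-order condition I will check individually.

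The main obstacle I expect is the torus-normalizer case: making the multiplicity bound uniform in $q$ and handling the two residues $q\equiv\pm1\pmod4$ cleanly. As a first attempt I will compute exactly $[\chi_D,1_D]$ for a principal or discrete series character $\chi$ of degree $q+\varepsilon$ parametrised by a character $\theta$ of order $2$ of the torus of order $q-\varepsilon$ (existing because that order is even). Its restriction to $D$ should contain $1_D$, which would give the even constituent directly.
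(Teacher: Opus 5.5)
Two of the steps you propose would fail, although your degree-counting fallback is sound.

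\textbf{The two failing steps.} First, for $H\in\{A_4,S_4,A_5\}$ the index $|G:H|$ is not even. $H$ is a Hall $\pi$-subgroup with $2\in\pi$, so $|G:H|$ is odd (e.g.\ $|\PSL_2(7):S_4|=7$, $|\PSL_2(11):A_5|=11$). In any case, a parity statement about $|G:H|$ would not produce an even-degree constituent.

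Second, your ``first attempt'' in the dihedral case uses the wrong characters. If $\theta$ has order $2$ on the torus of order $q-\varepsilon$, the corresponding principal or discrete series character is reducible. It is the sum of the two characters of degree $(q+\varepsilon)/2$, and that degree is odd because $q\equiv\varepsilon\pmod 4$. So this computation can only exhibit odd-degree constituents. Even for $\theta^2\ne1$, an irreducible $\chi$ of degree $q+\varepsilon$ has $[\chi_D,1_D]\in\{0,2\}$, according to the value of $\theta$ on the involution of the torus. For $q=5,9$ no such character occurs in $1_D^G$ at all. (Also, $H$ is in general only a Hall subgroup of $D=N_G(T_\varepsilon)$, of order $(q-\varepsilon)_\pi$; this is harmless by Lemma~\ref{lem:overgroup}.)

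\textbf{What does work in the dihedral case.} The degree count succeeds once the multiplicities are computed. Every nonidentity element of $D$, including every involution (since $q\equiv\varepsilon\pmod4$), is of $\varepsilon$-type. Hence $\St$ occurs in $1_D^G$ with multiplicity $2$ if $\varepsilon=1$ and $0$ if $\varepsilon=-1$. Each character of degree $(q+\varepsilon)/2$ occurs at most once. So the odd-degree part of $1_D^G$ has degree at most $3q+2$ if $\varepsilon=1$ and at most $q$ if $\varepsilon=-1$. This is less than $|G:D|=q(q+\varepsilon)/2$ except at $q=5$, where $1_{V_4}^{A_5}$ is $1$ plus the degree-$4$ character plus twice the degree-$5$ character.

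The paper avoids all of this by taking the opposite series. The character $\chi$ of degree $q-\varepsilon$ vanishes on all $\varepsilon$-type elements, so $\chi_D$ is exactly the regular character of $D$. Equivalently, $(q-\varepsilon)_\pi=|G|_\pi$, so $\chi$ has $\pi$-defect zero and Lemma~\ref{lem:defect} applies. The case you flagged as the main obstacle is therefore a one-line argument.

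\textbf{The exceptional subgroups.} The delicate work is actually for $A_4,S_4,A_5$. The paper uses the same $\chi$ of degree $q-\varepsilon$, which vanishes on involutions and on elements of order $4$. It then evaluates exactly the contribution of each cyclic subgroup of order $3$ or $5$, with the parameter $\vartheta$ chosen suitably.

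Your crude estimate $|\chi(h)|\le2$ guarantees $[\chi_H,1_H]>0$ only once $\chi(1)>2(|H|-1)$, not once $\chi(1)$ exceeds roughly $|H|$. So the list of hand checks is longer than you suggest; for $A_5$ it already contains $q=11,29,59,61$. The choice of series also genuinely matters there. For $S_4\le\PSL_2(7)$ and $A_5\le\PSL_2(11)$, the character of degree $q-\varepsilon$ does not occur, and $1_H^G=1+\psi$ with $\psi(1)=q+\varepsilon$. Your plan for these subgroups is viable only if the small cases are actually computed, with freedom to switch between the two series.
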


\begin{proof}
Since $2\in\pi$ and $p\notin\pi$, the field order $q$ is odd.  We use the
standard character table of $\PSL_2(q)$; see, for example,
\cite[Chapter~6]{Carter85}.  There is an irreducible character $\chi$ of
degree
\[
\chi(1)=q-\varepsilon.
\]

Suppose first that $3\notin\pi$.  By \cite[Theorem~5.2]{VRsurvey}, every odd prime in $\pi\cap\pi(G)$ divides
$q-\varepsilon$.  The same is true for the full $2$-part of $|G|$, since
$q\equiv\varepsilon\pmod4$.  Hence
\[
\chi(1)_\pi=(q-\varepsilon)_\pi=|G|_\pi=|H|.
\]
Thus $\chi$ has $\pi$-defect zero, and Lemma~\ref{lem:defect} proves the
assertion.  This also covers the dihedral row in the
rank-one part of the $2,3\in\pi$ classification whenever all Hall primes
divide $q-\varepsilon$.

Consider the exceptional Hall subgroups $A_4,S_4,A_5$ in
the rank-one classification.  For these cases choose the standard
character of degree $q-\varepsilon$ from the series opposite to the
torus of sign $\varepsilon$.  It vanishes on the involutions of sign
$\varepsilon$; its values on the opposite torus are parametrized by a
linear character $\vartheta$.  In the $S_4$ case it also vanishes on elements of order $4$.
If an odd prime $\ell$ divides $q-\varepsilon$, then the $\ell$-elements
have sign $\varepsilon$ and contribute zero.  If
$\ell\mid q+\varepsilon$, then on a cyclic subgroup $C$ of order $\ell$
one has
\[
\sum_{1\ne x\in C}\chi(x)
 =-\varepsilon\sum_{1\ne x\in C}
   \bigl(\vartheta(x)+\vartheta(x)^{-1}\bigr).
\]
This sum is $2\varepsilon$ when $\vartheta_C\ne1$, and is
$-2\varepsilon(\ell-1)$ when $\vartheta_C=1$.

For $H\cong A_4$ there are four subgroups of order $3$.  If
$3\mid q-\varepsilon$, then
\[
[\chi_H,1_H]=\frac{q-\varepsilon}{12}>0.
\]
If $3\mid q+\varepsilon$, choose $\vartheta$ nontrivial on a subgroup of
order $3$.  Then
\[
[\chi_H,1_H]=\frac{q+7\varepsilon}{12}>0.
\]
The Hall condition $(q^2-1)_{\{2,3\}}=24$ excludes the only possible
nonpositive value in the second expression.

For $H\cong S_4$ the preceding calculation gives the same two possibilities, since the elements of
orders $2$ and $4$ contribute zero.  Hence
\[
[\chi_H,1_H]=\frac{q-\varepsilon}{24}
\quad\text{or}\quad
[\chi_H,1_H]=\frac{q+7\varepsilon}{24}.
\]
These numbers are positive under the Hall condition
$(q^2-1)_{\{2,3\}}=48$, except when $q=7$.  In that case
$\PSL_2(7)$ has an $S_4$-subgroup of index $7$, and its coset character is
$1_G+\psi$ with $\psi(1)=6$.

Finally let $H\cong A_5$.  The group $A_5$ has ten cyclic subgroups of
order $3$ and six cyclic subgroups of order $5$.  If $\varepsilon=1$, we
choose the opposite-torus parameter $\vartheta$ nontrivial on every
subgroup of order $3$ or $5$ which occurs in the opposite torus.  Each
opposite order-$3$ family then contributes $20$ to the sum of $\chi$ on
$H$, and each opposite order-$5$ family contributes $12$.  Thus
$[\chi_H,1_H]>0$.

Suppose $\varepsilon=-1$.  Whenever possible choose $\vartheta$ nontrivial
but trivial on the $3$- and $5$-parts of the opposite torus.  The
contribution of an opposite order-$3$ family is then $40$, and that of an
opposite order-$5$ family is $48$, so again $[\chi_H,1_H]>0$.  If no such
parameter exists, every prime divisor of the opposite torus $(q-1)/2$
belongs to $\{3,5\}$.  The Hall condition gives
\[
(q^2-1)_{\{2,3,5\}}=120.
\]
Since $q\equiv-1\pmod4$, the integer $(q-1)/2$ is odd.  Also
$\gcd(q-1,q+1)=2$, so the $3$-part and the $5$-part of $(q-1)/2$ are each
at most $3$ and $5$, respectively.  Hence
\[
\frac{q-1}{2}\in\{1,3,5,15\}.
\]
The Hall condition leaves only $q=11$.  Here $H\cong A_5$ has
index $11$ and the standard degree-$11$ permutation character of
$\PSL_2(11)$ is
\[
1_H^G=1_G+\psi,
\qquad \psi(1)=10.
\]
\end{proof}

\subsection{Linear and unitary decomposition subgroups}

We continue to assume that $p\notin\pi$ and $2,3\in\pi$.  The structural point in type $A$ is that, apart from the isolated degree-four exception in \cite[Lemma~4.3(d)]{RV10}, every Hall subgroup is contained in a stabilizer of a decomposition into nonsingular blocks of dimension one or two; the mixed case of degree $11$ in \cite[Lemma~4.3(e)]{RV10} is of the same form.

\begin{lemma}\label{lem:linear-decomp-fixed}
Let $q$ be odd and let $W$ be either the full monomial subgroup of $\GL_n(q)$, or, when $n=2m$, the stabilizer $\GL_2(q)\wr S_m$ of a decomposition into $m$ two-dimensional blocks.  Write
\[
\gamma=\chi^{(n-1,1)},\qquad
\delta=\chi^{(n-2,2)},\qquad
\epsilon=\chi^{(n-2,1,1)}.
\]
Then $\gamma^W\ne0$.  Moreover $\delta^W\ne0$ when $n\ge4$, and $\epsilon^W\ne0$ when $n\ge4$.
\end{lemma}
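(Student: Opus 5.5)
We compute multiplicities through orbit counts of $W$ on subspaces and flags, as in Lemma~\ref{lem:linear-mon}. Write $a_j$ for the number of $W$-orbits on $j$-spaces and $b$ for the number of $W$-orbits on flags (point $\subset$ hyperplane), or equivalently on pairs (point, $2$-space containing it). The permutation characters we need are
\[
1_{P_1}^{\GL_n(q)}=1+\gamma,\qquad
1_{P_2}^{\GL_n(q)}=1+\gamma+\delta,
\]
together with the permutation character on flags $(\text{point}\subset 2\text{-space})$. This last character is induced from the parabolic of type $(1,1,n-2)$ and decomposes as
\[
1+2\gamma+\delta+\epsilon,
\]
by the Kostka numbers for the composition $(n-2,1,1)$. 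Frobenius reciprocity and $[1_X^G,1_W^G]=|W\backslash G/X|$ then give
\[
[\gamma,1_W^G]=a_1-1,\qquad [\delta,1_W^G]=a_2-a_1,\qquad [\epsilon,1_W^G]=c-a_2-a_1+1,
\]
where $c$ is the number of $W$-orbits on point-in-plane flags. The proof therefore reduces to three inequalities: $a_1\ge2$, $a_2\ge a_1+1$, and $c\ge a_1+a_2$.

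\textbf{The inequality $a_1\ge 2$.} For the monomial group, points of support $1$ and support $2$ lie in different orbits. For $\GL_2(q)\wr S_m$ we have $n\ge4$, and a point lying in one block is not $W$-conjugate to a point with nonzero components in two blocks. In the degenerate case where $W$ is transitive on points this would fail, but that does not happen for $n\ge2$ in the monomial case, or for $m\ge2$ in the block case.

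\textbf{The inequality for $\delta$.} In the monomial case this is exactly Lemma~\ref{lem:linear-mon}. For the block stabilizer $W$ with $m\ge2$, classify points by block-support size $k\in\{1,\dots,m\}$. We believe $W$ is transitive on points of fixed support size: $\GL_2(q)$ is transitive on the nonzero vectors of each block, and the scalars let us normalize. Hence $a_1=m$. For $2$-spaces we then exhibit at least $m+1$ orbits using invariants such as (block-support size, whether the space is a whole block, number of blocks meeting it in a line):
\begin{enumerate}[label=\textup{(\roman*)}]
\item a whole block;
\item for each $k\ge 2$, a space spanned by a line in one block and a vector of support $k$;
\item a space of support $2$ meeting no block, namely the graph of an isomorphism between two blocks.
\end{enumerate}
This already gives $m+1$ orbits.

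\textbf{The inequality for $\epsilon$.} This is the main obstacle, since $c$ must be counted fairly carefully. For each $W$-orbit of $2$-spaces $U$, we count the orbits of $\operatorname{Stab}_W(U)$ on the $q+1$ points of $U$, and need the total to be at least $a_1+a_2$. The idea is that a $2$-space typically contains points of several support types. For example, $\langle e_1,e_2+\cdots+e_s\rangle$ contains a point of support $1$, a point of support $s-1$ and points of support $s$, giving at least three point-orbits. The coordinate plane gives at least two: the coordinate points and the points $e_1+\lambda e_2$. Summing over the orbit representatives used in the previous step, each contributes at least one more point-orbit than needed, which yields the excess of roughly $a_1-1$ required. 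We would first attempt this in the monomial case with $n\ge4$, where explicit representatives suffice, and then imitate it block-wise.

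\textbf{Fallback.} If the counting becomes messy, we would instead exhibit an explicit nonzero $W$-fixed vector. Since $\epsilon$ is the unique constituent of the flag module not occurring in $\mathbb C[\Omega_1]\oplus\mathbb C[\Omega_2]$, it suffices to show that the space of $W$-invariant functions on flags has dimension exceeding what the images of $\mathbb C[\Omega_1]^W$ and $\mathbb C[\Omega_2]^W$ can span. This can be done by showing that an indicator function of a suitable flag orbit is not a sum $g(\text{point})+h(\text{plane})$. Concretely, we would find four flags $(x,U),(x',U),(x,U'),(x',U')$ whose orbit indicator values violate the relation
\[
F(x,U)-F(x',U)=F(x,U')-F(x',U'),
\]
choosing $x,x'$ of different support types. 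This reformulation is the one we expect to work most cleanly.
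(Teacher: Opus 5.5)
Your framework is the paper's: from $1+\gamma$, $1+\gamma+\delta$ and $1+2\gamma+\delta+\epsilon$ you get $\dim\gamma^W=a_1-1$, $\dim\delta^W=a_2-a_1$ and $\dim\epsilon^W=c-a_2-a_1+1$. Your arguments for $\gamma$ and for $\delta$ go through; for $\delta$, the graph of an isomorphism between two blocks is a legitimate substitute for the paper's orbit with projection dimensions $(2,1)$. Your $\epsilon$ count in the monomial case also works.

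\textbf{The gap is in the $\epsilon$ count for the block stabilizer $W=\GL_2(q)\wr S_m$.} The requirement is
$c-a_2=\sum_U\bigl(\#\{\operatorname{Stab}_W(U)\text{-orbits on the points of }U\}-1\bigr)\ge a_1=m$.
This excess must be exactly $a_1$, not ``roughly $a_1-1$''. Your claim that each representative from the $\delta$ step contributes an extra point-orbit fails for two of them:
\begin{itemize}
\item a whole block;
\item the graph $\{(x,\phi x)\}$ of an isomorphism $\phi$ between two blocks.
\end{itemize}
In both cases the stabilizer acts on the $q+1$ points of the $2$-space through a full $\GL_2(q)$, hence transitively. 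The block-wise version of $\langle e_1,e_2+\cdots+e_s\rangle$ gives excess $1$ for $s=2$ and $2$ for each $s\ge3$, so $2m-3$ in total. This is $\ge m$ only when $m\ge3$.

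For $m=2$, i.e.\ $n=4$ and $W=\GL_2(q)\wr S_2$, the lemma does assert $\epsilon^W\ne0$. There your representatives give excess $1$ while $2$ is needed, whichever support-two type you take in (ii). Your off-by-one target is exactly what hides this.

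The missing ingredient is the orbit the paper uses: the $2$-spaces $\langle e_1,f_1+e_2\rangle$ with $\langle e_1,f_1\rangle$ a block. These have projection dimensions $(2,1)$ and contain a unique point lying in a block, namely $\langle e_1\rangle$. That supplies the second extra flag orbit. For $m=2$ one then finds $a_1=2$, $a_2=4$ and $c=6$, so $\dim\epsilon^W=1$.

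\textbf{Your fallback does not repair this as formulated.} Suppose $(x,U),(x',U),(x,U'),(x',U')$ are all flags with $x\ne x'$. Two distinct points span a unique $2$-space, so $U=U'=x+x'$. The relation $F(x,U)-F(x',U)=F(x,U')-F(x',U')$ is then trivially satisfied by every $F$. The criterion ``$F$ is not of the form $g(x)+h(U)$'' is the right one. However, it has to be tested on a longer cycle of the point--plane incidence graph, for instance the six flags coming from three non-collinear points $x_1,x_2,x_3$ and the lines $x_ix_j$.
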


\begin{proof}
The point permutation character is $1+\gamma$.  In either decomposition action the number of $W$-orbits on projective points is the number of possible support sizes, and is at least two, so $\gamma^W\ne0$.

Let $P_2$ denote the stabilizer of a $2$-space.  For $2$-spaces, write $a_1$ and $a_2$ for the numbers of $W$-orbits on points and on $2$-spaces.  Since
\[
1_{P_2}^{\GL_n(q)}=1+\gamma+\delta,
\]
we have $\dim\delta^W=a_2-a_1$.  In the one-dimensional monomial case Lemma~\ref{lem:linear-mon} gives $a_2>a_1$.  In the two-dimensional block case $a_1=m$.  There are $m$ distinct $2$-space orbits obtained by prescribing the number of blocks in the support, and there is one further orbit with projection dimensions $(2,1)$ on two blocks, whereas the support-two orbit above has projection dimensions $(1,1)$.  Hence $a_2\ge m+1>a_1$.

Finally let $a_F$ be the number of $W$-orbits on flags $L<U$ with $\dim L=1$ and $\dim U=2$.  The flag permutation character is
\[
1+2\gamma+\delta+\epsilon,
\]
so
\[
\dim\epsilon^W=a_F-a_2-a_1+1.
\]
Every $2$-space orbit contributes at least one flag orbit.  In the one-dimensional monomial case choose the basis $e_1,\ldots,e_n$ with respect to which $W$ is monomial.  Then the coordinate plane, the spaces
\[
\langle e_1,e_2+\cdots+e_s\rangle\quad(3\le s\le n),
\]
and the two spaces used in Lemma~\ref{lem:linear-mon} each contain lines of two different support sizes.  Thus $a_F-a_2\ge n+1>a_1-1$.

In the two-dimensional block case choose a nonzero vector $e_i$ in the $i$th block for $1\le i\le m$.  For each support size $2\le s\le m$ the space
\[
\langle e_1,e_2+\cdots+e_s\rangle
\]
contains lines of two different block-support sizes, giving $m-1$ extra flag orbits.  The additional $2$-space orbit with projection dimensions $(2,1)$ also contains a support-one line and a support-two line, giving one more.  Hence $a_F-a_2\ge m=a_1$, and again $\epsilon^W\ne0$.
\end{proof}

\begin{proposition}\label{prop:even-A-large}
Let $G=\PSL_n(q)$ or $\PSU_n(q)$ be simple, let $p\notin\pi$, and assume that $2,3\in\pi$.  Suppose that $H\in\Hall_\pi(G)$ is contained in one of the decomposition subgroups in \cite[Lemma~4.3(b),(c),(e)]{RV10}.  Then $1_H^G$ has an even-degree irreducible constituent.
\end{proposition}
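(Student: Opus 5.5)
The plan is to combine Lemma~\ref{lem:overgroup} with Lemma~\ref{lem:linear-decomp-fixed}. Let $W\le\GL_n(q)$ be the full decomposition stabilizer containing the preimage of $H$: either the monomial group, or $\GL_2(q)\wr S_m$ with $n=2m$. In the mixed degree-$11$ case of \cite[Lemma~4.3(e)]{RV10}, $W$ is a stabilizer of a decomposition into blocks of dimensions $1$ and $2$, and I would redo the orbit counts of Lemma~\ref{lem:linear-decomp-fixed} for it. Each of the unipotent characters $\gamma,\delta,\epsilon$ occurs in a permutation character of $\GL_n(q)$ on subspaces or flags, so scalars act trivially. By the remark following Theorem~\ref{thm:linear}, each restricts irreducibly to $\SL_n(q)$, and so defines an irreducible character of $G$. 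A nonzero $W$-fixed vector is fixed by the image of $H$ in $G$, so each of these characters with $W^{\chi}\ne0$ is a constituent of $1_H^G$. The proposition then reduces to the following statement: for odd $q$, at least one of $\gamma,\delta,\epsilon$ that has a $W$-fixed vector has even degree.

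For the degrees, I would use
\[
\gamma(1)=q\frac{q^{n-1}-1}{q-1},\qquad
\delta(1)={n\brack 2}_q-{n\brack1}_q,\qquad
\epsilon(1)=q^3\frac{(q^{n-1}-1)(q^{n-2}-1)}{(q-1)(q^2-1)}.
\]
For $\epsilon(1)$ I would use the hook formula, or compute it as the flag permutation degree minus $1+2\gamma(1)+\delta(1)$. These quantities are $q$-analogues, and their parities are computed from $q$ modulo small powers of $2$. In particular $\gamma(1)\equiv n-1\pmod 2$, so $\gamma$ settles every case with $n$ odd, by Lemma~\ref{lem:linear-decomp-fixed}. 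For $n$ even, $\gamma(1)$ is odd. I would then split according to $q\equiv1$ or $3\pmod 4$ and $n\bmod 4$, and check that $\delta(1)$ or $\epsilon(1)$ is even. For example, ${n\brack2}_q$ with $q\equiv1\pmod 4$ behaves like $\binom n2$, which gives $\delta(1)\equiv n(n-3)/2\pmod 2$; this is even when $n\equiv0\pmod4$. The remaining residues need $\epsilon(1)$ or a finer $2$-adic count. The case $n=2$ is excluded here, since $\PSL_2$ was handled in Proposition~\ref{prop:even-A1}. When $n\ge4$, Lemma~\ref{lem:linear-decomp-fixed} gives fixed vectors for both $\delta$ and $\epsilon$.

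For $\PSU_n(q)$ I would run the same argument with the unitary unipotent characters labelled by the same partitions. Their degrees are obtained by Ennola duality $q\mapsto -q$, up to sign. In this setting the monomial or $\GU_2\wr S_m$ decomposition stabilizer acts on totally isotropic subspaces instead of all subspaces, so the orbit counts of Lemma~\ref{lem:linear-decomp-fixed} do not transfer verbatim. A cleaner route is to use the rank-three action on isotropic points together with Lemma~\ref{lem:unitary-rank3}. That lemma already gives $\alpha^M,\beta^M\ne0$ for odd $q$ and $n\ge4$, except for $(3,5)$, where $\alpha^M\ne0$ and $\alpha(1)$ is even. The remaining step is to check from the displayed formulas that one of $\alpha(1),\beta(1)$ is even for odd $q$. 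For $n$ even, $\alpha(1)$ contains the factor $q^{2m}-1$ over $(q+1)(q^2-1)$, and the parity follows from a $2$-adic LTE computation. For the $2$-block stabilizer the same support-counting argument needs adapting, with supports measured in blocks.

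I expect the main obstacle to be the parity bookkeeping. In the linear case with $n$ even, the $2$-adic valuations of $\delta(1)$ and $\epsilon(1)$ depend on $v_2(q\mp1)$ and on $n$. I would first try to show that $\delta(1)+\epsilon(1)$ is even, or that the two degrees have opposite parity. This should follow from $\delta(1)+\epsilon(1)=[n]_q[n-1]_q\,q\,(\cdots)$-type factorizations, and would make a full case split unnecessary. The unitary $2$-block orbit count is a secondary risk.
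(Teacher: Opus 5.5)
Your overall route matches the paper's in the main cases. For $\PSL_n(q)$ you use the unipotent characters $\gamma,\delta,\epsilon$ with fixed vectors from Lemma~\ref{lem:linear-decomp-fixed}. For $\PSU_n(q)$ you use the isotropic-point rank-three action via Lemma~\ref{lem:unitary-rank3}.

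\textbf{Missing case: $\PSU_3(q)$.} The proposal omits a case this machinery cannot reach. $G=\PSU_3(q)$ occurs both in the one-dimensional row (with $q\equiv-1\pmod{12}$) and in the two-block row with $n=2\cdot1+1$. For $n=3$ the action on isotropic points is $2$-transitive, so $1_P^G=1+\St$ with $\St(1)=q^3$ odd. Lemma~\ref{lem:unitary-rank3} needs $n\ge4$, and a block-support count needs at least two $2$-blocks, so some other idea is required. The paper takes a regular element $s$ of a Singer torus of $\PGU_3(q)$. Its centralizer has order $q^2-q+1$, which in these rows contains no Hall prime except a single factor $3$ when $3\mid q+1$, and that factor is cancelled by $(3,q+1)$. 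Hence the semisimple character of degree $(q-1)(q+1)^2$ has $\pi$-defect zero, in particular even degree, and Lemma~\ref{lem:defect} applies.

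\textbf{Unitary two-block row.} The computation you only flag as ``needing adapting'' must actually be carried out. Lemma~\ref{lem:unitary-rank3} treats only the orthonormal-frame stabilizer, and this is also what handles the unitary degree-$11$ mixed row. Take $O$ to be the isotropic points lying in a single block and $f=1_O$. Then a zero block contributes $q+1$ to $(Af)(x)$, an anisotropic component contributes $0$, and an isotropic component contributes $1$. The paper's test points give the distinct values $(m-2)(q+1)$ and $(m-3)(q+1)+1$ for $m\ge3$, and $0$ and $2$ for $m=2$.

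\textbf{Parity bookkeeping.} The step you expect to be the main obstacle is immediate, but your first guess about it is wrong. Gaussian binomial coefficients are integer polynomials in $q$, so for every odd $q$ they have the parity of ordinary binomial coefficients; no split by $q\bmod4$ is needed. Thus $\delta(1)\equiv n(n-3)/2$ and $\epsilon(1)=q^3{n-1\brack 2}_q\equiv(n-1)(n-2)/2\pmod2$. Their sum is $\equiv n^2-3n+1$, which is always odd. So $\delta(1)+\epsilon(1)$ is never even, but exactly one of the two degrees is even: $\delta(1)$ when $4\mid n$ and $\epsilon(1)$ when $n\equiv2\pmod4$. This is exactly what the paper uses.

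On the unitary side your deferred LTE check does work: $v_2(\alpha(1))=v_2(m)$ and $v_2(\beta(1))=v_2(m-1)$ for both $n=2m$ and $n=2m+1$.

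Finally, consider odd $n$ in the linear two-block row with a residual line, and the linear degree-$11$ row. Lemma~\ref{lem:linear-decomp-fixed} does not literally apply there, but $\gamma^W\ne0$ simply because $W$ is intransitive on projective points.
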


\begin{proof}
We treat the linear and unitary groups separately.

Assume first that $G=\PSL_n(q)$.  In the one-dimensional decomposition row, let $W$ be the full monomial subgroup.  If $n$ is odd, the deleted point constituent
\[
\gamma=\chi^{(n-1,1)},\qquad
\gamma(1)=\frac{q^n-1}{q-1}-1,
\]
has even degree and has a $W$-fixed vector by Lemma~\ref{lem:linear-decomp-fixed}.  If $4\mid n$, use
\[
\delta=\chi^{(n-2,2)}.
\]
Its degree is even because, for odd $q$,
\[
\delta(1)\equiv \binom n2-n=\frac{n(n-3)}2\pmod2,
\]
and Lemma~\ref{lem:linear-decomp-fixed} gives $\delta^W\ne0$.  If $n\equiv2\pmod4$, use
\[
\epsilon=\chi^{(n-2,1,1)}.
\]
The hook formula gives
\[
\epsilon(1)=q^3\cdot\frac{(q^{n-2}-1)(q^{n-1}-1)}{(q^2-1)(q-1)}.
\]
Since $n-2$ is divisible by $4$, the $2$-adic LTE formula shows that this degree is even, and Lemma~\ref{lem:linear-decomp-fixed} gives $\epsilon^W\ne0$.

For the two-dimensional block row of \cite[Lemma~4.3(c)]{RV10}, write $n=2m+k$ with $k\in\{0,1\}$.  If $k=0$, take $W=\GL_2(q)\wr S_m$; when $4\mid n$ use $\delta$, and when $n\equiv2\pmod4$ use $\epsilon$.  Both have $W$-fixed vectors by Lemma~\ref{lem:linear-decomp-fixed}.  If $k=1$, then $n$ is odd.  The decomposition stabilizer has at least two orbits on projective points, one represented by the residual line and one by a point in a $2$-block.  Hence $\gamma$ has a fixed vector, and $\gamma(1)$ is even.  The same argument applies to the mixed row $n=11$:  its decomposition stabilizer has more than one orbit on projective points, so $\gamma$ has a fixed vector, and $\gamma(1)$ is even.  The low-dimensional case $n=3$ in row (c) may instead be treated with a Singer torus: all Hall primes divide $q^2-1$, while the projective Singer factor is prime to them under the congruence in \cite[Lemma~4.3(c)]{RV10}.

Now let $G=\PSU_n(q)$.  In the one-dimensional nonsingular decomposition row, the case $n=3$ is separate.  Here $q\equiv-1\pmod{12}$ and every Hall prime divides $q+1$ (apart from the primes already occurring in $S_3$).  Choose a regular element $s$ in a Singer torus of $X^*=\PGU_3(q)$.  Its centralizer has order $q^2-q+1$, and the corresponding semisimple character of $G$ has degree
\[
 \chi_s(1)=(q-1)(q+1)^2.
\]
Since
\[
 |G|_{p'}=\frac{(q-1)(q+1)^2(q^2-q+1)}{3}
\]
and $v_3(q^2-q+1)=1$, while $\gcd(q+1,q^2-q+1)=3$, this degree contains the full part of $|G|$ at every Hall prime.  Thus $\chi_s$ has $\pi$-defect zero.  Hence assume $n\ge4$.  For the one-dimensional nonsingular decomposition Lemma~\ref{lem:unitary-rank3} applies.  Except for $(q,n)=(3,5)$ both nonprincipal constituents have fixed vectors and exactly one has even degree; in the exceptional case the lemma gives directly an even-degree constituent with a fixed vector.

Suppose next that the decomposition has $m\ge2$ nondegenerate $2$-spaces, with at most one residual nonsingular line.  Let $O$ be the orbit of isotropic points contained in one $2$-space, put $f=1_O$, and let $A$ be the adjacency operator of the isotropic-point rank-three graph.  Thus $(Af)(x)$ is the number of neighbors of $x$ lying in $O$.  This number is obtained block by block.  A zero block contributes all $q+1$ isotropic lines in that block; a nonzero anisotropic component contributes none; and a nonzero isotropic component contributes its own line.  If $m\ge3$, choose an isotropic point $x$ supported on two blocks with two anisotropic components of opposite norms, and an isotropic point $y$ supported on three blocks with one isotropic component and two anisotropic components of opposite norms.  Then
\[
(Af)(x)=(m-2)(q+1),
\qquad
(Af)(y)=(m-3)(q+1)+1,
\]
which are distinct.  If $m=2$, take instead two support-two isotropic points, one with anisotropic components of opposite norms and one with two isotropic components; their adjacency counts are $0$ and $2$.  Thus in every case
\[
Af\notin\langle1,f\rangle,
\]
so both nonprincipal rank-three constituents have fixed vectors.  Again exactly one has even degree.  The mixed row $n=11$ contains four such $2$-spaces, so the preceding adjacency calculation remains valid with all residual coordinates equal to zero.  For $n=3$, the unitary Singer torus has projective order prime to the Hall primes in row (c).

For the linear groups, the characters used above are unipotent characters occurring in projective subspace or flag permutation modules.  Hence scalars act trivially, and their restrictions from $\GL_n(q)$ to $\SL_n(q)$ are irreducible by the same determinant-twist argument as in the remark following Theorem~\ref{thm:linear}.  They therefore define irreducible characters of $\PSL_n(q)$.  In all cases the relevant decomposition stabilizer contains $H$, and Lemma~\ref{lem:overgroup} completes the proof.
\end{proof}

\subsection{The exceptional groups $\PSL_4^\eta(q)$}

The only case in \cite[Lemma~4.3]{RV10} not contained in the decomposition rows is the degree-four exception.

\begin{proposition}\label{prop:even-A6}
Let $G=\PSL_4^\eta(q)$ and suppose that $H\in\Hall_\pi(G)$ is the exceptional subgroup in \cite[Lemma~4.3(d)]{RV10}.  Thus
\[
\pi\cap\pi(G)=\{2,3,5\},\qquad q\equiv5\eta\pmod8,
\]
\[
(q+\eta)_3=3,\qquad (q^2+1)_5=5,
\]
and projectively $H\cong2^4.A_6$.  Then $1_H^G$ has an even-degree irreducible constituent.
\end{proposition}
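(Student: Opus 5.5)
Locate $H$ inside a familiar overgroup, show that a suitable permutation character of that overgroup has an even-degree nonprincipal constituent with an $H$-fixed vector, and conclude with Lemma~\ref{lem:overgroup}.

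\textbf{The overgroup.} Projectively $H\cong 2^4.A_6$. In dimension four the $A_6$ acts on the natural module via the isomorphism $A_6\cong\Sp_4(2)'$, and the normal $2^4$ is the image of an extraspecial group $4\circ2^{1+4}$ (or $2^{1+4}$). So $H$ is the normalizer of an extraspecial-type subgroup, a class $\mathcal C_6$ subgroup. Its order $|H|=2^4\cdot360=2^7\cdot3^2\cdot5$ must equal $|G|_{\{2,3,5\}}$. The congruences $q\equiv5\eta\pmod8$, $(q+\eta)_3=3$ and $(q^2+1)_5=5$ make this hold, and they also make the $2$-, $3$- and $5$-parts of $|G|$ small.

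\textbf{Main route: a $\pi$-defect-zero character.} I would first try to show that $G$ has a character of $\{2,3,5\}$-defect zero and then apply Lemma~\ref{lem:defect}. Such a character has degree divisible by $|G|_2>1$, hence even. Candidate labels come from regular semisimple classes in $X^*=\PGL_4^\eta(q)$, whose centralizer is a torus of order prime to $30$. The natural choice is the Singer-type torus, of order
\[
\frac{q^4-1}{q-\eta}=(q+\eta)(q^2+1),
\]
but this contains $2$ and $5$, so it fails. The torus of type $(3,1)$ has order $q^3-\eta$, which is odd because $q$ is odd. I would then check whether $3$ and $5$ divide $q^3-\eta$ under the stated congruences. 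The factor $5$ should be absent, since $(q^2+1)_5=5$ forces $q$ to have order $4$ modulo $5$. The $3$-part is controlled by whether $3$ divides $q-\eta$, and $(q+\eta)_3=3$ says it does not. If all this holds, the semisimple character has degree divisible by $|G|_2$ and has $\pi$-defect zero. The descent from $\PGL_4^\eta$ to $\PSL_4^\eta$ would use the central-quotient compatibility, as in the linear case, together with Lemma~\ref{lem:typeA-regular-tori} for the $(3,1)$ torus.

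\textbf{Fallback: a rank-three action.} If the defect-zero check fails, I would use the isomorphism $P\Omega_6^\eta(q)\cong\PSL_4^\eta(q)$ and the rank-three action on singular points of the orthogonal $6$-space. In that picture $H$ should become a decomposition stabilizer of the form $2^5{:}S_6$ preserving an orthonormal frame, and the adjacency-operator argument from Proposition~\ref{prop:even-orth-large} would give fixed vectors in both nonprincipal constituents, exactly one of which has even degree. An alternative fallback is to compare $|\Omega/H|$, the number of $H$-orbits on points of the natural module, with $\operatorname{rank}\ge2$, using $\gamma=\chi^{(3,1)}$, whose degree $q(q^2+q+1)$ is odd times $q$. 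That degree is odd, so $\gamma$ itself does not suffice, and $\delta=\chi^{(2,2)}$ would be needed instead.

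\textbf{Main obstacle.} The hard step is verifying that the chosen torus is genuinely $\pi'$ under the stated congruences, in particular the exact $3$-adic condition, and that its projective centralizer has no extra component of order $2$. If that fails, the difficulty moves to correctly identifying $H$ as the monomial frame stabilizer in the orthogonal model.
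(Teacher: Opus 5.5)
Your main route rests on a false computation: $q^3-\eta=(q-\eta)(q^2+\eta q+1)$ is \emph{even}, not odd. Under $q\equiv5\eta\pmod 8$ you even have $v_2(q^3-\eta)=v_2(q-\eta)=2$.

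In fact every maximal torus of $\PGL_4^\eta(q)$ has even order when $q$ is odd; check the five types $(4),(3,1),(2,2),(2,1,1),(1^4)$. So the criterion you set yourself, a regular label whose centralizer has order prime to $30$, can never be met. As written, the argument falls through to the fallbacks, and those are sketches, not proofs:
\begin{itemize}
\item Proposition~\ref{prop:even-orth-large} explicitly sends dimension $6$ back to type $A$. It also only treats stabilizers of decompositions into repeated $2$-spaces, which do not contain your frame stabilizer. The adjacency computation for a frame of six nonsingular $1$-spaces would therefore have to be done from scratch. For $\eta=-1$, $-1$ is a nonsquare, so there are not even support-two singular points to use as the test orbit.
\item Your second fallback only names $\delta=\chi^{(2,2)}$. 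Proving $\delta^H\ne0$ is exactly the content of the paper's linear argument: the bound $\chi(g)\ge1-q$ on nonscalar semisimple $g$, together with $q^2(q^2+1)>5759(q-1)$.
\item For $\eta=-1$ the paper needs a separate argument, Lemma~\ref{lem:rank3-orbit-div} with $D_r=q^2-q+1\nmid5760$.
\end{itemize}

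The missing idea that would rescue your main route is the centre. Since $q\equiv5\eta\pmod8$, we have $|Z(\SL_4^\eta(q))|=\gcd(4,q-\eta)=4$, so $|G|_2=|\PGL_4^\eta(q)|_2/4=2^7$. The $2$-part $4$ of the $(3,1)$-torus is therefore exactly cancelled.

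Concretely, for $s$ regular in that torus,
\[
\chi_s(1)=|\PGL_4^\eta(q)|_{p'}/(q^3-\eta)=(q^2-1)(q^4-1).
\]
Its $\{2,3,5\}$-part is $2^{3+4}\cdot3^{1+1}\cdot5=5760=|G|_\pi$. Your $3$- and $5$-adic checks on $q^3-\eta$ are correct.

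However, $\chi_s$ is a character of $G$ only if it is trivial on $Z(\SL_4^\eta(q))$, that is, only if $s$ lies in the image of $\SL_4^\eta(q)$ in $\PGL_4^\eta(q)$. The ``central-quotient compatibility'' does not give this for an arbitrary projective label. You can arrange it by choosing the eigenvalue $\beta$ on the $1$-block so that a lift of $s$ has determinant $1$. The projective centralizer is still exactly $T^*$, because any scalar multiplier preserving the spectrum must fix $\beta$: it is the only eigenvalue with a Frobenius orbit of length $1$.

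With these two points added, Lemma~\ref{lem:defect} finishes the proof uniformly in $\eta$, which would be genuinely shorter than the paper's two separate arguments. Without them, the proposal does not prove the statement.
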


\begin{proof}

Suppose first that $\eta=1$, so $G=\PSL_4(q)$.  Let $V$ be the natural $4$-dimensional module, and for $i=1,2$ let $P_i$ be the stabilizer of an $i$-dimensional subspace of $V$.  Let $\chi$ be the unipotent character labelled by $(2,2)$.  From the point and $2$-space permutation characters,
\[
1_{P_2}^G-1_{P_1}^G=\chi,
\]
so
\[
\chi(1)=q^2(q^2+1)
\]
and, for every semisimple element $g$ on the natural $4$-space,
\[
\chi(g)=|\operatorname{Fix}_{\mathrm{Gr}_2(V)}(g)|-|\operatorname{Fix}_{\mathbb P(V)}(g)|.
\]
Since $p\notin\{2,3,5\}$, every element of $H$ is semisimple.  We claim that
\[
\chi(g)\ge1-q
\]
for every non-scalar $g$.  This follows from the primary decomposition of the natural module.  An irreducible quartic block gives value $0$, while a cubic block together with a linear block gives value $-1$.  Two irreducible quadratic blocks give a nonnegative value, as does a repeated irreducible quadratic block.  For a quadratic block together with two linear blocks the value is $0$ when the two eigenvalues are distinct.  If they are equal, there are exactly two invariant $2$-spaces and $q+1$ invariant projective points, so the value is $1-q$.

Finally consider elements whose eigenvalues all lie in the ground field.  For eigenspace dimensions $3+1$, $2+2$, and $2+1+1$, a direct count gives respectively a nonnegative value, $q^2+1$, and at least $q+1$.  If the four eigenvalues are distinct, the value is $6-4=2$.  The scalar case is excluded.  This proves the claim.

The projective action of $H$ is faithful, so only the identity is scalar, and
\[
|H|=|2^4.A_6|=5760.
\]
Hence
\[
[\chi_H,1_H]
\ge
\frac{q^2(q^2+1)-5759(q-1)}{5760}.
\]
The Hall arithmetic gives
\[
q\equiv53,77\pmod{120}.
\]
Thus $q\ge53$, and
\[
q^2(q^2+1)>5759(q-1).
\]
Therefore $[\chi_H,1_H]>0$.  Since $\chi(1)$ is even, the result follows in the linear case.

Now suppose that $\eta=-1$, so $G=\PSU_4(q)$.  We use the rank-three action on the totally isotropic $1$-spaces of the natural Hermitian space.  Its permutation character is
\[
1_P^G=1+\alpha+\beta.
\]
For the collinearity graph the parameters needed below are
\[
v=(q^3+1)(q^2+1),\qquad k=q^2(q+1),
\]
and the two restricted eigenvalues are
\[
r=q^2-1,
\qquad
s=-(q+1).
\]
The $r$-eigenspace affords an irreducible constituent $\alpha$ of degree
\[
\alpha(1)=q^2(q^2+1),
\]
which is even because $q$ is odd.  These are the rank-three parameters and multiplicities for the Hermitian polar space; see \cite[Theorem~1.1]{KL82}.

Apply Lemma~\ref{lem:rank3-orbit-div} to the subgroup $H$.  Since
\[
k-s=(q+1)(q^2+1)
\]
and
\[
v=(q+1)(q^2-q+1)(q^2+1),
\]
we have
\[
D_r=\frac{v}{\gcd(v,k-s)}=q^2-q+1.
\]
Here $|H|=5760$.  The Hall congruences give
\[
q\equiv43,67\pmod{120}.
\]
For the two least possible values,
\[
43^2-43+1=1807,
\qquad
67^2-67+1=4423,
\]
and neither integer divides $5760$.  For every larger admissible $q$ we have $q\ge163$, and hence
\[
D_r=q^2-q+1>5760.
\]
Thus $D_r\nmid|H|$ in every case.  Lemma~\ref{lem:rank3-orbit-div} gives $\alpha^H\ne0$.  Hence the even-degree character $\alpha$ occurs in $1_H^G$, completing the unitary case.
\end{proof}

\subsection{The case $2\in\pi$ and $3,p\notin\pi$}

Put
\[
\tau=(\pi\cap\pi(G))\setminus\{2\}.
\]
By \cite[Theorem~5.2]{VRsurvey}, under these hypotheses a Hall $\pi$-subgroup $H$ is contained in the normalizer of a maximal torus $T$ whose normalizer contains a Sylow $2$-subgroup of $G$; moreover, $H$ contains a normal abelian Hall $\tau$-subgroup contained in $T$.  The maximal tori whose normalizers contain Sylow $2$-subgroups are described in the proof of \cite[Lemma~3.10]{VRsurvey}.  For the classical groups this description gives the one- and two-dimensional decompositions, with the residual line or orthogonal summand when one occurs, used below.  Only the containment of $H$ in this decomposition stabilizer is used below; the full torus normalizer need not lie in it.

\begin{proposition}\label{prop:even-threefree-large}
Let $G$ be a simple classical group in characteristic $p$, let $H\in\Hall_\pi(G)$, and assume
\[
2\in\pi,\qquad 3,p\notin\pi.
\]
Then $1_H^G$ has an irreducible constituent of even degree for
\[
\PSp_{2n}(q),\qquad \PSL_n(q)\ (n\ge3),\qquad \PSU_n(q)\ (n\ge4),
\]
and for all orthogonal torus-normalizer families arising from the decomposition rows of \cite[Lemma~3.10]{VRsurvey} and \cite[Lemma~6.7(d),(e)]{RV10}.
\end{proposition}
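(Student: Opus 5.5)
The plan is to reduce everything to the decomposition-stabilizer computations already carried out for the case $2,3\in\pi$. By \cite[Theorem~5.2]{VRsurvey} and the torus description in \cite[Lemma~3.10]{VRsurvey}, $H$ lies in the stabilizer $M$ of a decomposition of the natural module. The pieces of this decomposition are one- or two-dimensional nondegenerate (or nonsingular) blocks, possibly with a residual line or residual orthogonal summand. So it suffices to find an even-degree constituent of $1_M^G$ and then apply Lemma~\ref{lem:overgroup}. In each family I would reuse the fixed-vector computations that are literally independent of whether $3\in\pi$. They only used containment of $H$ in a decomposition stabilizer, together with parity of the degrees; parity depends only on $q$ odd, which holds because $2\in\pi$ and $p\notin\pi$.

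Concretely, I would proceed family by family as follows.
\begin{enumerate}[label=\textup{(\arabic*)}]
\item For $\PSp_{2n}(q)$, the relevant torus normalizer for odd $q$ lies in $\Sp_2(q)\wr S_n$. The proof of Proposition~\ref{prop:even-sp} then applies verbatim: the rank-three point action gives constituents $\alpha,\beta$ with $M$-fixed vectors (or $\alpha$ alone when $n=2$), and the even one is $\alpha$ or $\beta$ according to the parity of $n$.
\item For $\PSL_n(q)$ with $n\ge3$, $M$ is the monomial group, or $\GL_2(q)\wr S_m$, or the latter with a residual line. I would use Lemma~\ref{lem:linear-decomp-fixed} exactly as in Proposition~\ref{prop:even-A-large}: take $\gamma$ if $n$ is odd, $\delta$ if $4\mid n$, and $\epsilon$ if $n\equiv2\pmod 4$. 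The case $n=3$ is covered by $\gamma$, since $\gamma(1)=q^2+q$ is even.
\item For $\PSU_n(q)$ with $n\ge4$, I would use Lemma~\ref{lem:unitary-rank3} for the orthonormal decomposition. For decompositions with $m\ge2$ two-dimensional blocks, I would use the adjacency argument in the proof of Proposition~\ref{prop:even-A-large}, noting that exactly one of $\alpha(1),\beta(1)$ is even for $q$ odd. The exceptional pair $(q,n)=(3,5)$ is handled by the even $\alpha$ of Lemma~\ref{lem:unitary-rank3}.
\item For the orthogonal families, the decomposition is into repeated $2$-spaces of type $\varepsilon$ plus a residual summand, which is exactly the setting of Proposition~\ref{prop:even-orth-large}. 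I would copy its support and adjacency calculations. These separate the cases $q\equiv1$ and $q\equiv-1\pmod 4$; the modulus $12$ there played no role beyond fixing the type of the blocks. For rows (d),(e) of \cite[Lemma~6.7]{RV10}, I would again set the residual coordinates to zero.
\end{enumerate}

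The main obstacle I expect is making sure that the torus-normalizer description for $3\notin\pi$ really yields exactly these decomposition stabilizers, with the same small-rank coverage. Two points need checking:
\begin{itemize}
\item Whether, when $3\notin\pi$, a Hall subgroup could sit in a decomposition with very few blocks, for instance unitary $m=2$ with no residual line, or orthogonal with fewer than three repeated blocks, where the adjacency separation was previously verified only in the presence of the conditions coming from $3\in\pi$.
\item Whether the anisotropic-block orthogonal computation needs $q\equiv-1\pmod 4$ rather than $\pmod{12}$. The displayed difference $q(mq+m-4q-6)$ is nonzero for $m\ge4$ regardless, so I would only need to recheck the three-block and two-block cases directly.
\end{itemize}
For any residual low-rank cases I would fall back on the standard isomorphisms listed in Proposition~\ref{prop:even-orth-large}, and for $\PSL_2$ on Proposition~\ref{prop:even-A1}.
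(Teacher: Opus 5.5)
Your proposal is correct and follows the paper's proof essentially step for step. The paper likewise places $H$ in the decomposition stabilizer coming from \cite[Lemma~3.10]{VRsurvey}, then reuses Proposition~\ref{prop:even-sp}, the characters $\gamma,\delta,\epsilon$ via Lemma~\ref{lem:linear-decomp-fixed} (plus the two-orbit count for $\gamma$ when there is a residual line), Lemma~\ref{lem:unitary-rank3} and the block adjacency counts (where the values $0$ and $2$ settle your $m=2$ worry), and the support calculations of Proposition~\ref{prop:even-orth-large}, finishing with Lemma~\ref{lem:overgroup}.

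One caution, which the paper's own proof shares, concerns the new field $q=3$ that becomes possible once $3\notin\pi$. There $\mathbb F_3^\times=\{\pm1\}$, so the support-four test point with norms $a,-a,b,-b$, $b\neq\pm a$, does not exist, and your claim that the anisotropic case with $m\ge4$ needs no rechecking fails at $q=3$. In that case, however, $q-\varepsilon=4$ forces $\pi\cap\pi(G)=\{2\}$, so $H$ is a Sylow $2$-subgroup and Malle--Navarro \cite{MN12} gives the conclusion.
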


\begin{proof}
For the symplectic groups, the proof of \cite[Lemma~3.10]{VRsurvey} places $H$ in the decomposition subgroup
\[
\Sp_2(q)\wr S_n.
\]
Proposition~\ref{prop:even-sp} shows that the relevant nonprincipal constituent of the projective-point permutation character has a nonzero fixed vector for this decomposition subgroup.  Since $H$ is contained in that subgroup, the constituent also has a nonzero $H$-fixed vector.

For orthogonal groups, the proof of \cite[Lemma~3.10]{VRsurvey} places $H$ in the decomposition stabilizer $M$ considered in Proposition~\ref{prop:even-orth-large}: the repeated nondegenerate $2$-spaces have the type determined by $q\pmod4$, and the residual orthogonal summand is retained when it occurs.  The two support types used in Proposition~\ref{prop:even-orth-large} are available in this decomposition.  In the hyperbolic case, and in the anisotropic case with at least four repeated blocks, both test points have zero residual component.  With three anisotropic blocks, the second test point uses the residual summand, covering both a nonsingular $1$-space and a residual nondegenerate $2$-space.  In the two mixed rows of \cite[Lemma~6.7(d),(e)]{RV10}, four repeated $2$-spaces suffice and both test points may be chosen with zero residual component.  Proposition~\ref{prop:even-orth-large} therefore gives a nonprincipal constituent with a nonzero $M$-fixed vector, and hence with a nonzero $H$-fixed vector.

For $\PSL_n(q)$, the proof of \cite[Lemma~3.10]{VRsurvey} places $H$ in a one-dimensional monomial stabilizer when $e(2,q)=1$, and in the corresponding two-dimensional block stabilizer, with at most one residual line, when $e(2,q)=2$.  If $n$ is odd, use the deleted point constituent $\gamma=\chi^{(n-1,1)}$.  The decomposition stabilizer has at least two orbits on projective points, represented by a point in one block and a point with support in at least two blocks.  Hence the projective-point permutation character has at least two fixed vectors on restriction to the stabilizer.  Since its only nonprincipal constituent is $\gamma$, we have $\gamma^M\ne0$.  Here
\[
\gamma(1)=q \cdot \frac{q^{n-1}-1}{q-1}=q(1+q+\cdots+q^{n-2}).
\]
As $q$ and $n$ are odd, the sum has $n-1$ terms, so $\gamma(1)$ is even.  If $4\mid n$, use $\delta=\chi^{(n-2,2)}$, and if $n\equiv2\pmod4$, use $\epsilon=\chi^{(n-2,1,1)}$.  Their degrees are even in the indicated parity classes, and Lemma~\ref{lem:linear-decomp-fixed} gives nonzero fixed vectors for the corresponding decomposition stabilizer, hence for $H$.

For $\PSU_n(q)$, let $M$ be the image in $G=\PSU_n(q)$ of the decomposition stabilizer containing $H$ in the proof of \cite[Lemma~3.10]{VRsurvey}.  If $e(2,q)=1$, then $M$ stabilizes a decomposition into repeated nondegenerate $1$-spaces, together with a residual line when $n$ is odd.  Lemma~\ref{lem:unitary-rank3} gives nonzero $M$-fixed vectors in both nonprincipal constituents of the isotropic-point rank-three action, except when $(q,n)=(3,5)$, where it gives an even-degree constituent with a nonzero $M$-fixed vector.  Outside this exceptional pair, exactly one of the two nonprincipal constituents has even degree.

If $e(2,q)=2$, then $M$ stabilizes a decomposition into repeated nondegenerate $2$-spaces, again with a residual line when $n$ is odd.  For $m\ge3$, the two orbit types used in Proposition~\ref{prop:even-A-large} have adjacency values $(m-2)(q+1)$ and $(m-3)(q+1)+1$; for $m=2$ the values are $0$ and $2$.  They are distinct, so the indicator of the one-block isotropic-point orbit has nonzero projection onto both nonprincipal eigenspaces.  Thus both nonprincipal constituents have nonzero $M$-fixed vectors.  In either case there is an even-degree constituent with a nonzero $M$-fixed vector.  Since $H\le M\le G$, Lemma~\ref{lem:overgroup} shows that this constituent occurs in $1_H^G$.
\end{proof}

\subsection{Rank-three orbit divisibility}

We need the following orbit-divisibility lemma for the exceptional orthogonal cases.

\begin{lemma}\label{lem:rank3-orbit-div}
Let a finite group $G$ act transitively with rank three on a finite set $X$, and let $A$ be the adjacency matrix of one of the two nontrivial orbital graphs.  Since the action is transitive of rank three, the permutation module is multiplicity free.  Write
\[
\mathbb C X=1_G\oplus V_r\oplus V_s,
\]
where $V_r$ and $V_s$ are the two nonprincipal irreducible eigenspaces and $A$ has eigenvalues $k,r,s$ on $1_G,V_r,V_s$, respectively.  Let $H\le G$, and put
\[
D_r=\frac{|X|}{\gcd(|X|,k-s)},
\qquad
D_s=\frac{|X|}{\gcd(|X|,k-r)}.
\]
Then
\[
D_r\nmid |H|\quad\Longrightarrow\quad V_r^H\ne0,
\]
and
\[
D_s\nmid |H|\quad\Longrightarrow\quad V_s^H\ne0.
\]
\end{lemma}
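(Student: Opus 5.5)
The plan is to prove the contrapositive: if $V_r^H=0$, then $D_r$ divides $|H|$; the statement for $V_s$ follows by exchanging the roles of $r$ and $s$. The idea is to find an $H$-fixed vector that has integer coordinates and lies in $\langle\mathbf 1\rangle\oplus V_s$. Applying $A-sI$ then kills its $V_s$-part, and integrality forces a divisibility. The natural such vector is the indicator function of an $H$-orbit.

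First I will record the decomposition of $H$-fixed vectors. Since $\mathbb C X=\langle\mathbf 1\rangle\oplus V_r\oplus V_s$ is a decomposition into $G$-submodules, it is in particular a decomposition into $H$-submodules, so
\[
(\mathbb C X)^H=\langle\mathbf 1\rangle\oplus V_r^H\oplus V_s^H .
\]
Under the hypothesis $V_r^H=0$ this gives $(\mathbb C X)^H\subseteq\langle\mathbf 1\rangle\oplus V_s$. Now fix an $H$-orbit $O\subseteq X$ and put $f=1_O\in(\mathbb C X)^H$. Write $f=c\mathbf 1+v$ with $v\in V_s$. Since $V_s$ is orthogonal to $\mathbf 1$ (the decomposition is into $G$-submodules of a permutation module, and $V_s$ contains no trivial constituent), taking the inner product with $\mathbf 1$ gives $c=|O|/|X|$.

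Next I will apply $A-sI$. The vector $\mathbf 1$ is an eigenvector with eigenvalue $k$, and $A$ acts on $V_s$ as the scalar $s$, so
\[
(A-sI)f=(k-s)\frac{|O|}{|X|}\,\mathbf 1 .
\]
The left side has integer entries because $A$ is a $0/1$ matrix and $s$ is an integer. That $s$ is an integer is the one point needing care. For a rank-three graph the restricted eigenvalues are algebraic integers. If they are irrational, they are conjugate and $V_r,V_s$ have equal dimension, and I would handle this case by working with the entries of $(A-sI)f$ as algebraic integers. Alternatively, I would note that the argument only uses that $(A-sI)f$ is a constant vector whose constant is an algebraic integer, and that $(k-s)|O|/|X|$ is rational exactly when $s$ is. I expect this integrality bookkeeping to be the only delicate step. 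In the applications in the paper, namely the polar graphs, the eigenvalues are integers, so I would state the lemma with integral $r,s$ if necessary.

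With integrality in hand, the coefficient $(k-s)|O|/|X|$ is an integer, so $|X|\mid (k-s)|O|$. Dividing by $g=\gcd(|X|,k-s)$ gives $D_r=|X|/g$ dividing $\frac{k-s}{g}|O|$. Since $D_r$ is coprime to $(k-s)/g$, we get $D_r\mid |O|$. Finally $|O|=[H:H_x]$ divides $|H|$, so $D_r\mid|H|$, which is the contrapositive of the claim. The statement for $D_s$ follows by the same argument with $A-rI$ in place of $A-sI$.
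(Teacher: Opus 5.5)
Your argument is correct and is essentially the paper's: the paper's identity $Q=sI+\frac{k-s}{|X|}\mathbf{1}(n_1,\ldots,n_t)$ for the orbit quotient matrix is your relation $(A-sI)1_{O}=\frac{(k-s)|O|}{|X|}\mathbf{1}$ written column by column, and both proofs then conclude $D_r\mid |O|\mid |H|$. Your integrality worry is moot, because the definition of $\gcd(|X|,k-s)$ in the statement already requires $s\in\mathbb{Z}$; the paper also sidesteps the issue by reading off an off-diagonal entry $q_{ij}$, which is the value of $(A-sI)1_{O}$ at a point outside $O$ and hence a neighbour count.
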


\begin{proof}
Let $O_1,\ldots,O_t$ be the $H$-orbits on $X$, with $n_i=|O_i|$, and let $Q=(q_{ij})$ be the quotient matrix of the orbit partition for $A$.  Since the orbits of a group of automorphisms form an equitable partition, $Q$ represents the action of $A$ on $(\mathbb C X)^H$.

Assume first that $V_r^H=0$.  If $t=1$, then $H$ is transitive on $X$, so $|X|$ divides $|H|$, and hence $D_r$ divides $|H|$.  We may therefore assume $t\ge2$.  The spectrum of $Q$ is then $k$ together with $s$ of multiplicity $t-1$.  The vector ${\bf1}$ is a right $k$-eigenvector.  On the space of orbit-constant vectors, use the weighted inner product
\[
\langle x,y\rangle_H=\sum_{i=1}^t n_i x_i\overline{y_i}.
\]
Since
\[
n_iq_{ij}=n_jq_{ji},
\]
the matrix $Q$ is self-adjoint for this inner product.  Hence the orthogonal complement of ${\bf1}$ is
\[
\left\{x=(x_1,\ldots,x_t):\sum_{i=1}^t n_i x_i=0\right\}.
\]
Under the assumption $V_r^H=0$, this whole complement is the $s$-eigenspace of $Q$.  Thus $Q$ acts as $k$ on $\langle{\bf1}\rangle$ and as $s$ on its orthogonal complement.  The orthogonal projection onto $\langle{\bf1}\rangle$ is
\[
x\longmapsto \frac{\sum_i n_i x_i}{|X|}{\bf1},
\]
and therefore
\begin{equation}\label{eq:rank3-quotient}
Q=sI+\frac{k-s}{|X|}{\bf1}(n_1,\ldots,n_t).
\end{equation}
For $i\ne j$ this gives
\[
q_{ij}=\frac{k-s}{|X|}n_j.
\]
Since $q_{ij}$ is an integer, $D_r$ divides $n_j$ for every $j$.  Each orbit length $n_j$ divides $|H|$, and hence $D_r$ divides $|H|$.  This proves the first implication by contraposition.  Interchanging $r$ and $s$ gives the second.
\end{proof}

The relevant rank-three graphs have the following parameters.

\begin{lemma}\label{lem:orth-polar-parameters}
Let $q$ be odd.
\begin{enumerate}
\item[(i)] For the collinearity graph on the singular $1$-spaces of a $7$-dimensional nondegenerate orthogonal space,
\[
v=(q+1)(q^2-q+1)(q^2+q+1),\qquad
k=q(q+1)(q^2+1),
\]
and the two restricted eigenvalues are $q^2-1$ and $-(q^2+1)$.
\item[(ii)] For the collinearity graph on the singular $1$-spaces of an $8$-dimensional plus-type orthogonal space,
\[
v=(q+1)^2(q^2+1)(q^2-q+1),\qquad
k=q(q^2+1)(q^2+q+1),
\]
and the two restricted eigenvalues are $q^3-1$ and $-(q^2+1)$.
\item[(iii)] For the collinearity graph on the singular $1$-spaces of a $9$-dimensional nondegenerate orthogonal space,
\[
v=(q+1)(q^2+1)(q^4+1),\qquad
k=q(q+1)(q^2-q+1)(q^2+q+1),
\]
and the two restricted eigenvalues are $q^3-1$ and $-(q^3+1)$.
\end{enumerate}
\end{lemma}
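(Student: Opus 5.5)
The plan is to derive all three items from the general parameters of the collinearity graph of a finite classical polar space, then specialize. I will write a nondegenerate orthogonal space over $\mathbb F_q$ ($q$ odd) as having Witt index $m$ and set $e=0$ for plus type in even dimension, $e=1$ in odd dimension, and $e=2$ for minus type. Items (i), (ii), (iii) then correspond to $(m,e)=(3,1)$, $(4,0)$ and $(4,1)$. The target general formulas are
\[
v_{m,e}=\frac{(q^m-1)(q^{m+e-1}+1)}{q-1},\qquad
k=q\,\frac{(q^{m-1}-1)(q^{m+e-2}+1)}{q-1}=q\,v_{m-1,e},
\]
with restricted eigenvalues $q^{m-1}-1$ and $-(q^{m+e-2}+1)$.

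\textbf{Counting points.} First I count the nonzero singular vectors, by induction on $m$. Split off a hyperbolic pair to get $V=\langle u,w\rangle\perp V'$, where $V'$ has the same $e$ and Witt index $m-1$. A singular vector is written $\alpha u+\beta w+x$ with $\alpha\beta+Q(x)=0$. Counting the cases $\beta=0$ and $\beta\neq0$ gives a linear recursion. Its solution is that the number of singular $1$-spaces is $v_{m,e}$. The base cases are the anisotropic spaces of dimension $0,1,2$, for which $v_{0,e}=0$.

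\textbf{Valency.} For the valency, fix a singular point $x$. The points collinear with $x$ are the singular points of $x^\perp$ other than $x$. Since $x^\perp/x$ is nondegenerate of the same type with index $m-1$, each singular point of $x^\perp/x$ lifts to exactly $q$ points of $x^\perp$ distinct from $x$. Hence $k=q\,v_{m-1,e}$.

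\textbf{The constants $\lambda$ and $\mu$.} Next I compute $\lambda$ and $\mu$ by the same device. If $x,y$ are noncollinear, then $\langle x,y\rangle$ is a hyperbolic plane. The common neighbours of $x$ and $y$ are the singular points of $\langle x,y\rangle^\perp$, which is nondegenerate of index $m-1$. So $\mu=v_{m-1,e}$. If $x,y$ are collinear, the common neighbours are of two kinds. There are $q-1$ further points on the line $xy$. The others are the singular points of $\langle x,y\rangle^\perp$ off that line, which are counted through the index-$(m-2)$ space $\langle x,y\rangle^\perp/\langle x,y\rangle$. This gives $\lambda=q-2+q^2v_{m-2,e}+1$, to be checked carefully.

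\textbf{Eigenvalues.} The restricted eigenvalues of a strongly regular graph are the roots of $t^2-(\lambda-\mu)t-(k-\mu)=0$. So it suffices to verify two identities:
\[
(q^{m-1}-1)-(q^{m+e-2}+1)=\lambda-\mu,\qquad
(q^{m-1}-1)(q^{m+e-2}+1)=k-\mu.
\]
The second is immediate, since $k-\mu=(q-1)v_{m-1,e}$. The first is a routine polynomial check.

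\textbf{Specialization.} Finally I substitute the three values of $(m,e)$ and factor. For (i), $v=(q^2+q+1)(q^3+1)=(q+1)(q^2-q+1)(q^2+q+1)$, $k=q(q+1)(q^2+1)$, and the eigenvalues are $q^2-1$ and $-(q^2+1)$. For (ii), $v=(q^4-1)(q^3+1)/(q-1)=(q+1)^2(q^2+1)(q^2-q+1)$, $k=q(q^2+q+1)(q^2+1)$, and the eigenvalues are $q^3-1$ and $-(q^2+1)$. For (iii), $v=(q^8-1)/(q-1)\cdot\frac{q-1}{q^4-1}\cdot(q^4-1)/(q-1)$, which simplifies to $(q+1)(q^2+1)(q^4+1)$; $k=q(q^3-1)(q^3+1)/(q-1)$; and the eigenvalues are $q^3-1$ and $-(q^3+1)$. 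All of these agree with the statement.

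\textbf{Main obstacle.} I expect the hardest part to be the bookkeeping of the $\lambda$ count, in particular not double counting points on the line $xy$. As a safeguard, I will also cross-check the result against the standard tables of polar graphs (Brouwer--Van Maldeghem), or against $k(k-\lambda-1)=(v-k-1)\mu$.
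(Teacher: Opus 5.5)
Your proposal is correct and is essentially the paper's argument. It obtains $k$ from the $q$ lifts of each singular point of $x^\perp/x$, $\mu$ from the complement of the hyperbolic plane $\langle x,y\rangle$, and $\lambda$ from $L^\perp/L$; your $q-1+q^2v_{m-2,e}$ is the paper's $(q+1)+q^2v_{m-2,e}-2$. It then takes the roots of $t^2-(\lambda-\mu)t-(k-\mu)=0$. The only difference is packaging: you do the computation once for general Witt index $m$ and type $e$ and derive the point count by the hyperbolic-pair recursion, whereas the paper quotes the point counts and specializes to the three cases from the start.
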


\begin{proof}
A nondegenerate orthogonal space of dimension $2m+1$ has
\[
\frac{q^{2m}-1}{q-1}
\]
singular $1$-spaces.  A plus-type space of dimension $2m$ has
\[
\frac{(q^{m-1}+1)(q^m-1)}{q-1}
\]
singular $1$-spaces.  These formulas are obtained by counting the zeros of the standard hyperbolic-coordinate quadratic forms.

First let $V$ have dimension $7$.  The point count gives
\[
v=\frac{q^6-1}{q-1}=(q+1)(q^2-q+1)(q^2+q+1).
\]
For a singular point $x$, the quotient $x^\perp/x$ is a $5$-dimensional nondegenerate orthogonal space.  Each singular point of this quotient has exactly $q$ singular lifts different from $x$.  Hence
\[
k=q\cdot\frac{q^4-1}{q-1}=q(q+1)(q^2+1).
\]
If $x$ and $y$ are nonadjacent, then $\langle x,y\rangle$ is a hyperbolic plane and its orthogonal complement is $5$-dimensional nondegenerate.  Thus the number of common neighbors is
\[
\mu=(q+1)(q^2+1).
\]
If $x$ and $y$ are adjacent and $L=\langle x,y\rangle$, then $L$ is the radical of $L^\perp$, while $L^\perp/L$ is $3$-dimensional nondegenerate.  It has $q+1$ singular points; over each there are $q^2$ singular points of $L^\perp$ outside $L$.  After deleting $x$ and $y$ we obtain
\[
\lambda=(q+1)+q^2(q+1)-2=q^3+q^2+q-1.
\]
The restricted eigenvalues are the roots of
\[
z^2-(\lambda-\mu)z-(k-\mu)=0,
\]
namely $q^2-1$ and $-(q^2+1)$.

Now let $V$ have dimension $8$ and plus type.  The same point count gives
\[
v=\frac{(q^3+1)(q^4-1)}{q-1}
=(q+1)^2(q^2+1)(q^2-q+1).
\]
Here $x^\perp/x$ is $6$-dimensional of plus type, so
\[
k=q(q^2+1)(q^2+q+1).
\]
For nonadjacent $x,y$, the orthogonal complement of the hyperbolic plane $\langle x,y\rangle$ is $6$-dimensional of plus type, and therefore
\[
\mu=(q^2+1)(q^2+q+1).
\]
For adjacent $x,y$ and $L=\langle x,y\rangle$, the quotient $L^\perp/L$ is $4$-dimensional of plus type and has $(q+1)^2$ singular points.  Consequently
\[
\lambda=(q+1)+q^2(q+1)^2-2
=q^4+2q^3+q^2+q-1.
\]
The same quadratic equation now has roots $q^3-1$ and $-(q^2+1)$.

Finally let $V$ have dimension $9$.  Then
\[
v=\frac{q^8-1}{q-1}=(q+1)(q^2+1)(q^4+1),
\]
and $x^\perp/x$ is $7$-dimensional, so
\[
k=q\cdot\frac{q^6-1}{q-1}
=q(q+1)(q^2-q+1)(q^2+q+1).
\]
If $x$ and $y$ are nonadjacent, the orthogonal complement of $\langle x,y\rangle$ is $7$-dimensional, and hence
\[
\mu=\frac{q^6-1}{q-1}.
\]
If $x$ and $y$ are adjacent and $L=\langle x,y\rangle$, then $L^\perp/L$ is $5$-dimensional.  Consequently
\[
\lambda=(q+1)+q^2\frac{q^4-1}{q-1}-2.
\]
The quadratic equation above now has roots $q^3-1$ and $-(q^3+1)$.
\end{proof}

\subsection{The exceptional groups $\Omega_9(q)$}

\begin{proposition}\label{prop:orth-nine}
Let $G=\Omega_9(q)$ and let $H$ be the Hall subgroup in \cite[Lemma~6.7(h)]{RV10}, so that
\[
H\cong2.\Omega_8^+(2).2.
\]
Then $1_H^G$ has an even-degree irreducible constituent.
\end{proposition}

\begin{proof}
Let $X$ be the singular $1$-spaces of the natural $9$-dimensional orthogonal module.  This is one of the classical rank-three actions in \cite[Theorem~1.1]{KL82}.  Write
\[
1_X=1+\alpha+\beta,
\]
where $\alpha$ is afforded by the eigenspace for $r=q^3-1$.  By Lemma~\ref{lem:orth-polar-parameters}(iii),
\[
|X|=(q+1)(q^2+1)(q^4+1),
\]
\[
k=q(q+1)(q^2-q+1)(q^2+q+1),
\qquad
s=-(q^3+1).
\]
Using
\[
1+\alpha(1)+\beta(1)=|X|
\]
and the trace identity
\[
k+r\alpha(1)+s\beta(1)=0,
\]
we obtain
\[
\alpha(1)=\frac{q(q+1)^2(q^2+1)(q^2-q+1)}2.
\]
This is even because $q$ is odd.

We apply Lemma~\ref{lem:rank3-orbit-div}.  Since
\[
k-s=(q+1)^2(q^2+1)(q^2-q+1),
\]
and
\[
\gcd(q^4+1,q+1)=2,
\qquad
\gcd(q^4+1,q^2-q+1)=1,
\]
we have
\[
D_r=\frac{|X|}{\gcd(|X|,k-s)}=\frac{q^4+1}{2}.
\]
The second gcd follows because, if a prime divides both factors, then $q^2\equiv q-1$, whence $q^4+1\equiv1-q$; it therefore divides $q-1$ and then also $1$.

Since $q$ is odd, $q^4+1\equiv2\pmod{16}$, so $D_r$ is odd.  Let $\ell$ be a prime divisor of $D_r$.  Then
\[
q^4\equiv-1\pmod\ell,
\]
so the multiplicative order of $q$ modulo $\ell$ is $8$.  Hence $\ell\equiv1\pmod8$.  On the other hand, every prime divisor of $H\cong2.\Omega_8^+(2).2$ belongs to $\{2,3,5,7\}$.  Thus $D_r\nmid|H|$.  Lemma~\ref{lem:rank3-orbit-div} gives $\alpha^H\ne0$, and therefore $\alpha$ is an even-degree constituent of $1_H^G$.
\end{proof}

\subsection{The exceptional groups $\Omega_7(q)$}

\begin{proposition}\label{prop:orth-seven}
Let $G=\Omega_7(q)$ and let $H$ be a Hall subgroup in \cite[Lemma~6.7(f)]{RV10}.  Thus
\[
H\cong\Omega_7(2),\qquad
\pi\cap\pi(G)=\{2,3,5,7\},\qquad
|H|=2^9\cdot3^4\cdot5\cdot7.
\]
Then $1_H^G$ has an even-degree irreducible constituent.
\end{proposition}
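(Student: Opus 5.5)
The plan is to copy the argument of Proposition~\ref{prop:orth-nine}, but in dimension $7$. Let $X$ be the set of singular $1$-spaces of the natural $7$-dimensional orthogonal module. The action of $G$ on $X$ is a classical rank-three action \cite[Theorem~1.1]{KL82}, so
\[
1_X=1+\alpha+\beta,
\]
where $\alpha$ and $\beta$ are the eigenspaces of the collinearity graph for $r=q^2-1$ and $s=-(q^2+1)$ respectively. Since $2\in\pi$ and $p\notin\pi$, the field order $q$ is odd, so Lemma~\ref{lem:orth-polar-parameters}(i) applies. It gives
\[
v=(q+1)(q^2-q+1)(q^2+q+1),\qquad k=q(q+1)(q^2+1).
\]

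First I will compute the two multiplicities from the relations $1+\alpha(1)+\beta(1)=v$ and $k+r\alpha(1)+s\beta(1)=0$. Using $v-1=q(1+q+q^2+q^3+q^4)$, I expect
\[
\alpha(1)=\frac{q(q^2+1)(q^2+q+1)}2,\qquad
\beta(1)=\frac{q(q+1)^2(q^2-q+1)}2.
\]
For odd $q$ we have $q^2+1\equiv2\pmod 4$, so $\alpha(1)$ is odd. On the other hand $(q+1)^2/2$ is even, so $\beta(1)$ is even. The constituent to use is therefore $\beta$, not $\alpha$. This differs from the $9$-dimensional case, and it is the step where I must be careful about which eigenvalue goes with which constituent.

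Next I will apply the $s$-part of Lemma~\ref{lem:rank3-orbit-div}. We have
\[
k-r=q^4+q^3+q+1=(q+1)^2(q^2-q+1).
\]
Since $\gcd(q^2+q+1,q+1)=1$, we get $\gcd(v,k-r)=(q+1)(q^2-q+1)$, and hence
\[
D_s=\frac{v}{\gcd(v,k-r)}=q^2+q+1.
\]

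It remains to check that $q^2+q+1\nmid|H|=2^9\cdot3^4\cdot5\cdot7$; I expect this to be the only real obstacle, and it is elementary. The integer $q^2+q+1$ is odd. Every prime divisor of it is either $3$ or congruent to $1$ modulo $3$, so $5$ never divides it. Moreover, $9$ never divides $q^2+q+1$, since $v_3(q^2+q+1)\le1$ (if $q\equiv1\pmod 3$ then $q^2+q+1\equiv3\pmod 9$). Hence if $D_s$ divided $|H|$, then $q^2+q+1$ would divide $21$. That forces $q\in\{1,2,4\}$, none of which is an odd prime power. (For $q=3$ we get $q^2+q+1=13$, which does not divide $|H|$.)

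So $D_s\nmid|H|$, and Lemma~\ref{lem:rank3-orbit-div} gives $\beta^H\ne0$. By Frobenius reciprocity $\beta$ is then a constituent of $1_H^G$, and it has even degree.
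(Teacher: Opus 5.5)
Your proof is correct and follows the paper's route up to the last step. You use the same rank-three action on the singular points of the $7$-space, the same even-degree constituent $\beta$ on the $s=-(q^2+1)$ eigenspace, and the same application of Lemma~\ref{lem:rank3-orbit-div} with $D_s=q^2+q+1$.

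The only real difference is how you show $D_s\nmid|H|$. The paper uses the Hall congruence $q\equiv\pm2\pmod 5$ from \cite[Lemma~6.7(f)]{RV10} to exclude the prime $5$, which reduces to $D_s\mid 3^4\cdot7=567$. It then has to prove $q\ge173$ by turning the $2$-, $3$-, $5$- and $7$-adic Hall conditions into congruences and ruling out $q=13,43,67,83,157$ one at a time. You instead observe that every prime divisor of $q^2+q+1$ is $3$ or $\equiv1\pmod 3$, and that $v_3(q^2+q+1)\le1$. This forces $D_s\mid 21$ immediately, so $q\in\{1,2,4\}$, none of which is an odd prime power.

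Your version is shorter and avoids all the Hall arithmetic and the small-$q$ search. It shows $\beta^H\ne0$ for every subgroup $H$ of order $2^9\cdot3^4\cdot5\cdot7$ in $\Omega_7(q)$ with $q$ odd, so it is the more robust argument. The paper's route gains nothing extra here; its size bound $q\ge173$ is simply a less efficient way to reach the same conclusion.
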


\begin{proof}
Let $X$ be the singular $1$-spaces of the natural $7$-dimensional orthogonal module.  This is one of the classical rank-three actions in \cite[Theorem~1.1]{KL82}.  The permutation character is
\[
1_X=1+\alpha+\beta,
\]
and for the collinearity graph
\[
|X|=(q+1)(q^2-q+1)(q^2+q+1),
\]
\[
k=q(q+1)(q^2+1),\qquad r=q^2-1,\qquad s=-(q^2+1).
\]
The $s$-eigenspace affords $\beta$, with
\[
\beta(1)=\frac{q(q+1)^2(q^2-q+1)}2,
\]
which is even for odd $q$.

By Lemma~\ref{lem:rank3-orbit-div}, it is enough to show that
\[
D_s=\frac{|X|}{\gcd(|X|,k-r)}
\]
does not divide $|H|$.  Since
\[
k-r=(q+1)^2(q^2-q+1),
\]
we have
\[
\gcd(|X|,k-r)
=(q+1)(q^2-q+1)\gcd(q^2+q+1,q+1).
\]
Now $q^2+q+1\equiv1\pmod{q+1}$, so
\[
D_s=q^2+q+1.
\]

The Hall conditions in \cite[Lemma~6.7(f)]{RV10} imply
\[
q\equiv\pm2\pmod5.
\]
Hence $5\nmid D_s$.  Also $D_s$ is odd.  If $D_s$ divided $|H|$, it would therefore divide
\[
3^4\cdot7=567.
\]
The Hall conditions imply $q\ge173$.  Indeed,
\[
 |G|=\frac12 q^9(q^2-1)(q^4-1)(q^6-1),
\]
and the $\{2,3,5,7\}$-part of $|G|$ must be exactly
$2^9\cdot3^4\cdot5\cdot7$.  We show that no admissible $q<173$ exists.  For an odd prime power $q<173$ with $q\equiv\pm2\pmod5$, the order formula gives
\[
v_2(|G|)=9,\qquad v_3(|G|)=4.
\]
Factoring $q^2-1$, $q^4-1$, and $q^6-1$ and applying LTE converts these equalities into congruence conditions modulo $2^6\cdot3^3\cdot5=8640$.  Among the odd prime powers $q<173$ satisfying $q\equiv\pm2\pmod5$, the surviving residue classes are represented exactly by
\[
q=13,43,67,83,157.
\]
For these values, the order formula gives $v_7(|G|)=3$ for $q=13,43,83$, $v_7(|G|)=2$ for $q=67$, and $v_5(|G|)=2$ for $q=157$.  None is admissible.  Thus $q\ge173$, and hence
\[
D_s=q^2+q+1>567,
\]
a contradiction.  Therefore $\beta^H\ne0$, and the even-degree character $\beta$ occurs in $1_H^G$.
\end{proof}

\subsection{The groups $\PSU_3(q)$}

Consider the torus-normalizer case for $\PSU_3(q)$.  The natural action on isotropic points has an even-degree irreducible constituent with nonzero $H$-fixed vectors.

\begin{proposition}\label{prop:U3}
Let $G=\PSU_3(q)$ with $q$ odd, and let $H$ be a Hall subgroup in the case $2\in\pi$, $3,p\notin\pi$.  Then $1_H^G$ has an even-degree irreducible constituent.
\end{proposition}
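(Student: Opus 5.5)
Our plan is to use the doubly transitive action of $G=\PSU_3(q)$ on the $q^3+1$ isotropic points. Its permutation character is $1+\St$ with $\St(1)=q^3$, which is odd, so this action by itself does not help. We therefore expect to use a different permutation character, or a suitable Deligne--Lusztig character, of even degree. The natural candidate comes from the orthonormal-basis stabilizer. By the description from \cite[Lemma~3.10]{VRsurvey} recalled before Proposition~\ref{prop:even-threefree-large}, $H$ lies in a decomposition stabilizer $M$. If $e(2,q)=1$, so $q\equiv1\pmod4$, then $M$ is the stabilizer of an orthogonal decomposition into three nonsingular $1$-spaces. If $q\equiv3\pmod4$, then $M$ is the stabilizer of a nondegenerate $2$-space plus a line, that is, the stabilizer of a nonsingular point, with structure $\GU_2(q)$ modulo scalars. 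By Lemma~\ref{lem:overgroup} it suffices to find an even-degree constituent of $1_M^G$. In either case $M$ fixes a nonsingular point, or permutes three of them, so we use the action of $G$ on the $q^2(q^2-q+1)$ nonsingular points.

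The key steps are as follows. First, we decompose the permutation character $1_K^G$ on nonsingular points, where $K$ is the stabilizer of a nonsingular point. Using the unipotent characters $1,\ \chi_{q^2-q}$ and $\St$, together with the Lusztig-series characters of degree $q^2-q+1$ and $(q-1)(q^2-q+1)$, the standard decomposition is
\[
1_K^G=1+\St+\sum\chi_i,
\]
where the $\chi_i$ have degree $q^2-q+1$ (odd) or $(q-1)(q^2-q+1)$ (even), or the unipotent character of degree $q^2-q$ (even) occurs. Rather than relying on this decomposition, we will verify directly that the unipotent character $\chi=\chi_{q^2-q}$, whose degree $q(q-1)$ is even, has $[\chi_K,1_K]>0$. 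To do this we compute $[\chi_K,1_K]$ with the known character values of $\chi$ on semisimple and unipotent classes. Equivalently, we check that $\chi$ is the difference of the Harish-Chandra and Deligne--Lusztig characters $R_{T}^G(1)$ for the appropriate torus, and then apply the Mackey formula.

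Second, in the case $q\equiv1\pmod4$, where $M$ is the monomial group $(q+1)^2.S_3$ modulo scalars, $M$ lies inside a nonsingular-point stabilizer only up to the $S_3$. Here we will instead compute $[\chi_M,1_M]$ directly. For this we use the values of $\chi_{q^2-q}$ on the torus $(q+1)^2$, namely $2$ on regular elements, $1-q$ on non-regular non-central elements and $q^2-q$ on the identity, together with its values on the elements swapping coordinates. Summing over the $|M|=6(q+1)^2/\gcd(3,q+1)$ elements should give a positive inner product by a routine count.

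The main obstacle is the character-value bookkeeping in this second step. In particular, the $S_3$-part contributes elements of order $3$, which are regular elements of a Coxeter-type torus on which $\chi$ takes the value $-1$, and $3\notin\pi$ means $H$ avoids these. This suggests a cleaner route: bound $[\chi_H,1_H]$ from below as
\[
[\chi_H,1_H]\ \ge\ \frac{q^2-q-(|H|-1)(q-1)}{|H|},
\]
using $|\chi(g)|\le q-1$ on nonidentity elements. This bound works whenever $|H|<q+1$. When $|H|$ is larger, $H$ contains most of the torus, and we will fall back on the explicit torus sum above. If this still fails for very small $q$, we will resort to a direct check of $\PSU_3(3)$ and $\PSU_3(5)$ from the \emph{Atlas}~\cite{Atlas85}.
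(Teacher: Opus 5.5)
\paragraph{The key step is false.} The cuspidal unipotent character $\chi=\chi_{q^2-q}$ is \emph{never} a constituent of the permutation character on nonsingular points, so $[\chi_K,1_K]>0$ fails for every $q$. This follows from the values you list, together with three more: $\chi=-q$ on (scalar)$\cdot$(transvection), $\chi=1$ on the other non-semisimple elements of $K$, and $\chi=0$ on regular elements of the torus of order $(q^2-1)(q+1)$.

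In $\GU_3(q)$ write the point stabilizer as $\GU_2(q)\times\GU_1(q)$, with elements $\operatorname{diag}(A,c)$. For each fixed $A$, the sum over $c\in\mu_{q+1}$ vanishes:
\begin{itemize}
\item if $A$ is scalar, it is $(q^2-q)+q(1-q)=0$;
\item if $A=au$ with $u\ne1$ unipotent, it is $-q+q=0$;
\item if $A$ has distinct eigenvalues in $\mu_{q+1}$, it is $2(1-q)+2(q-1)=0$;
\item if the eigenvalues of $A$ lie outside $\mu_{q+1}$, every term is $0$.
\end{itemize}
Inside $\operatorname{SU}_3(q)$, where $c=\det A^{-1}$, the total still cancels for both $d=1$ and $d=3$.

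\paragraph{Passing to $H$ does not rescue this character.} Because $3\notin\pi$, up to conjugacy:
\begin{itemize}
\item if $q\equiv1\pmod4$, then $H=T_\pi\rtimes\langle w\rangle\le N_G(T)$ with $|T|=(q^2-1)/d$;
\item if $q\equiv3\pmod4$, then $H=A\rtimes\langle\sigma\rangle$, where $A$ is the $\pi$-part of the torus of order $(q+1)^2/d$ and $\sigma$ is an involution fixing a coordinate line.
\end{itemize}
You have these two cases interchanged. In both of them $H$ fixes a nonsingular point, so the $S_3$ bookkeeping is unnecessary. Your own torus count gives
\[
[\chi_H,1_H]=\tfrac14(q-1)\bigl((q-1)_{\pi'}-1\bigr)
\quad\text{resp.}\quad
[\chi_H,1_H]=\tfrac12\bigl((q+1)_{\pi'}-1\bigr)\bigl((q+1)_{\pi'}-3\bigr).
\]
This is $0$, for instance, for $\pi=\{2\}$ and $q=3,5,7,9,11,17,23,31$. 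For $q=5$ one has $H\cong SD_{16}$, and $\chi$ takes the value $20$ at $1$, $-4$ on the five involutions, and $0$ on the other ten elements. So the failures are not confined to $q\le5$. Neither your crude bound (which needs $|H|<q+1$) nor the explicit torus sum can close them.

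\paragraph{What is missing.} You need a different even-degree character. The paper uses the principal-series character $\operatorname{Ind}_B^G\theta$ of degree $q^3+1$. Here $\theta$ is a faithful linear character of the cyclic torus $T$ of order $(q^2-1)/d$, inflated to $B=UT$. The argument runs as follows.
\begin{enumerate}
\item $N_G(T)$ is the stabilizer of an unordered pair of isotropic points.
\item Some $U$-orbit of such pairs has trivial stabilizer in $T$, so every $\theta$ occurs in $(1_{N_G(T)}^G)_B$.
\item Mackey's formula gives irreducibility, since $\theta^w\ne\theta$.
\end{enumerate}
Note that the reduction $H\le N_G(T)$ on which this rests is valid only for $q\equiv1\pmod4$. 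For $q\equiv3\pmod4$ one has $|G|_2=2(q+1)_2^2>4(q+1)_2=|N_G(T)|_2$, so that case must be argued from $H\le A\rtimes\langle\sigma\rangle$. There $\operatorname{Ind}_B^G\theta$ still has $H$-fixed vectors when $q>3$. For $q=3$, where $H$ is a Sylow $2$-subgroup, the character of degree $14=(q-1)(q^2-q+1)$ does.
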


\begin{proof}
Let $X$ be the set of isotropic $1$-spaces of the natural unitary module.  Then
\[
|X|=q^3+1,
\]
and $G$ is doubly transitive on $X$.  Fix $\infty\in X$ and write
\[
B=G_\infty=U\rtimes T,
\qquad |U|=q^3,
\qquad |T|=\frac{q^2-1}{d},
\qquad d=(3,q+1),
\]
where $T$ is cyclic.  The group $U$ acts regularly on $X\setminus\{\infty\}$.  If $N$ is the stabilizer of an unordered pair of distinct points, then orbit--stabilizer gives
\[
|N|=2\cdot\frac{q^2-1}{d}=2|T|.
\]
Thus $N=N_G(T)$.  By \cite[Theorem~5.2]{VRsurvey}, after conjugating $H$ if necessary we may assume that $H\le N_G(T)=N$.

Before passing to the projective quotient, write $U$ in its standard upper-unitriangular form.  Its elements may be written
\[
u(a,b)=
\begin{pmatrix}
1&a&b\\
0&1&-a^q\\
0&0&1
\end{pmatrix},
\qquad
b+b^q=-a^{q+1},
\]
and a diagonal torus element may be represented by
\[
h_\lambda=\operatorname{diag}(\lambda,\lambda^{q-1},\lambda^{-q}),
\qquad \lambda\in\mathbb F_{q^2}^{\times}.
\]
Conjugation sends the $a$-coordinate to $\lambda^{q-2}a$.  The kernel of this action has order
\[
\gcd(q^2-1,q-2)=d,
\]
which is exactly the scalar kernel.  Thus $T$ acts faithfully on $U/Z(U)$.

The $U$-orbits on unordered pairs not containing $\infty$ are represented by
\[
\{1,u\},\qquad 1\ne u\in U,
\]
with $u$ and $u^{-1}$ representing the same orbit.  Choose $u=u(a,b)$ with $a\ne0$.  If $t\in T$ stabilizes this $U$-orbit, then $u^t=u$ or $u^{-1}$.  The first possibility gives $t=1$.  In the second, faithfulness on $U/Z(U)$ forces $t$ to be the unique involution of $T$, hence it sends $a$ to $-a$.  A representative $h_\lambda$ of this involution satisfies $\lambda^{q-2}=-1$.  Since its order divides both $2(q-2)$ and $q^2-1$, it divides $2d$, and therefore $\lambda^{q+1}=1$.  Thus this involution sends
\[
u(a,b)\longmapsto u(-a,b).
\]
On the other hand
\[
u(a,b)^{-1}=u(-a,b^q).
\]
For fixed $a\ne0$, the equation $b+b^q=-a^{q+1}$ has exactly $q$ solutions, and exactly one lies in $\mathbb F_q$.  Since $q$ is odd, choose $b\notin\mathbb F_q$.  Then the corresponding $U$-orbit has trivial stabilizer in $T$.

It follows that the $U$-fixed subspace of the unordered-pair permutation module contains the regular $T$-module.  Hence every linear character $\theta\in\Irr(T)$, inflated across $U$, occurs in the restriction of $1_N^G$ to $B$.  Choose $\theta$ faithful.

The induced character is irreducible.  Since the action on $X$ is doubly transitive, $B\backslash G/B$ has two double cosets, represented by $1$ and a Weyl element $w$, and $B\cap B^w=T$.  In the above torus parametrization $w$ acts by
\[
t\longmapsto t^{-q}.
\]
If $\theta^w=\theta$, faithfulness would imply
\[
-q\equiv1\pmod{|T|}.
\]
This is impossible because $|T|=(q^2-1)/d$ does not divide $q+1$ for odd $q\ge3$.  Mackey's formula therefore gives
\[
\left[\theta^G,\theta^G\right]_G
=1+[\theta,\theta^w]_T
=1.
\]
Thus
\[
\chi:=\theta^G\in\Irr(G),
\qquad
\chi(1)=[G:B]=q^3+1.
\]
Frobenius reciprocity and the regular-orbit argument give
\[
[1_N^G,\chi]>0.
\]
Since $q$ is odd, $q^3+1$ is even.  Finally $H\le N$, so Lemma~\ref{lem:overgroup} gives $\chi\le1_H^G$.
\end{proof}

\subsection{The exceptional groups $P\Omega_8^+(q)$}

It remains to consider row (g) of \cite[Lemma~6.7]{RV10}.

\begin{proposition}\label{prop:orth-eight}
Let $G=P\Omega_8^+(q)$ be simple, and let $H$ be the image in $G$ of a Hall subgroup in \cite[Lemma~6.7(g)]{RV10}.  Thus
\[
H\cong\Omega_8^+(2),
\qquad
\pi\cap\pi(G)=\{2,3,5,7\}.
\]
Then $1_H^G$ has an even-degree irreducible constituent.
\end{proposition}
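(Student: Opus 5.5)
Follow the pattern of Propositions~\ref{prop:orth-nine} and \ref{prop:orth-seven}. Let $X$ be the set of singular $1$-spaces of the natural $8$-dimensional plus-type module, with the rank-three action of $G$ from \cite[Theorem~1.1]{KL82}, and write $1_X=1+\alpha+\beta$. By Lemma~\ref{lem:orth-polar-parameters}(ii),
\[
v=(q+1)^2(q^2+1)(q^2-q+1),\qquad k=q(q^2+1)(q^2+q+1),\qquad r=q^3-1,\qquad s=-(q^2+1).
\]
Solve $1+\alpha(1)+\beta(1)=v$ together with the trace identity $k+r\alpha(1)+s\beta(1)=0$ for the two multiplicities. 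I expect
\[
\alpha(1)=\frac{q(q+1)^3(q^2-q+1)}{2}\quad\text{or a similar form},\qquad \beta(1)=\frac{q^2(q^2+1)(q^4+q^2+1)}{q^2+q+1}\cdot(\text{correction}).
\]
I will identify which constituent is even for odd $q$; the $r$-eigenspace multiplicity should carry the factor $(q+1)^3/2$, which is even. The scalar kernel acts trivially on $X$, so the action descends to $P\Omega_8^+(q)$ and the characters are characters of $G$.

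Next apply Lemma~\ref{lem:rank3-orbit-div} to the even constituent. If it is $\alpha$, compute
\[
D_r=\frac{v}{\gcd(v,k-s)},\qquad k-s=(q^2+1)(q^3+q^2+q+1)=(q+1)(q^2+1)^2 .
\]
Then $\gcd(v,k-s)=(q+1)(q^2+1)\gcd\bigl((q+1)(q^2-q+1),\,q^2+1\bigr)$. Since $q^2+1\equiv 2\pmod{q+1}$ and $\gcd(q^2-q+1,q^2+1)=\gcd(q,q^2+1)=1$, this gcd is $(q+1)(q^2+1)\cdot 2$ (as $q$ is odd), so
\[
D_r=\frac{(q+1)(q^2-q+1)}{2}=\frac{q^3+1}{2}.
\]
If instead $\beta$ is the even constituent, the analogous computation with $k-r$ gives $D_s$, and I would proceed the same way.

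It then suffices to show that $D_r\nmid |H|=|\Omega_8^+(2)|=2^{12}\cdot3^5\cdot5^2\cdot7$. Here I use the Hall condition from \cite[Lemma~6.7(g)]{RV10}: the $\{2,3,5,7\}$-part of $|G|$ must equal $|H|$, which is very restrictive (compare $q\equiv\pm2\pmod 5$ and similar conditions in the $\Omega_7$ case). First, $q^2-q+1$ is prime to $2$ and $5$ when $q\equiv\pm2\pmod5$, and has $3$-part at most $3$. Its prime divisors other than $3$ are $\equiv1\pmod 6$, so the only relevant one is $7$, with $7$-part at most $7$ given $v_7(|G|)=1$. Hence $D_r\mid|H|$ would force $q^2-q+1\mid 21$, giving $q\le5$. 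On the other hand, the Hall condition excludes all small $q$: $v_3(|G|)=5$ and $v_5(|G|)=2$ fail for small odd $q$, by an LTE check as in Proposition~\ref{prop:orth-seven}.

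The main obstacle will be this last arithmetic step: extracting from \cite[Lemma~6.7(g)]{RV10} exactly which congruences on $q$ hold, and bounding the $7$- and $3$-parts of $q^2-q+1$. If the bound by $21$ is too crude, I would fall back on listing the small admissible $q$ explicitly, as in Proposition~\ref{prop:orth-seven}. Once $D_r\nmid|H|$ is established, Lemma~\ref{lem:rank3-orbit-div} gives $\alpha^H\ne0$, so the even-degree $\alpha$ occurs in $1_H^G$.
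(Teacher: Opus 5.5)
Your proposal is correct, and it proves $D_r\nmid|H|$ by a different and cleaner route than the paper.

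\textbf{Setup.} This agrees with the paper: the same rank-three action, the same $D_r=(q+1)(q^2-q+1)/2$, and the same appeal to Lemma~\ref{lem:rank3-orbit-div}. Your guessed multiplicity is wrong, however. Solving the two linear equations gives $\alpha(1)=q(q^2+1)^2$ and $\beta(1)=q^2(q^4+q^2+1)$. So $\alpha$ is indeed the even constituent, but because $q^2+1$ is even, not because of a factor $(q+1)^3/2$. Since $\beta$ is odd, your $D_s$ fallback is never needed.

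\textbf{The paper's route.} The paper splits into two regimes. For $q\ge559$ it drops $D_r$ and uses a trace inequality. It combines $0\le\operatorname{tr}(AP_H)=k+r\dim\alpha^H+s\dim\beta^H$ with $t\ge|X|/|H|$, so that $\alpha^H=0$ would force $q^3\le|H|-1$. For $q<559$ it uses the Hall congruences to enumerate the five admissible values $q=173,277,283,317,347$, and factors $D_r$ explicitly in each case.

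\textbf{Your route.} You note that $\Phi_6(q)=q^2-q+1$ divides $D_r$. Its prime divisors are $3$ or primes $\equiv1\pmod6$, so among $2,3,5,7$ only $3$ and $7$ can occur. Moreover $v_3(q^2-q+1)\le1$, and the Hall condition gives $v_7(q^2-q+1)\le v_7(|G|)=v_7(|H|)=1$. Hence $D_r\mid|H|$ would force $q^2-q+1\mid21$, that is, $q\le5$.

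\textbf{Comparison.} Your argument is uniform in $q$ and entirely hand-checkable. It avoids both the spectral bound and the residue-class enumeration with explicit factorizations. The paper's route gains only that its large-$q$ step does not depend on the arithmetic of the Hall condition.

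\textbf{Minor points.} Your closing LTE check is more than you need. An odd $q\le5$ is $3$ or $5$, whose characteristic lies in $\pi$, contradicting $p\notin\pi$. The congruence $q\equiv\pm2\pmod5$ is also unnecessary, since $5\not\equiv1\pmod6$ already excludes $5$.
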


\begin{proof}
Let $X$ be the singular $1$-spaces of the natural $8$-dimensional plus-type module.  This is one of the classical rank-three actions in \cite[Theorem~1.1]{KL82}.  The corresponding permutation character has the form
\[
1_X=1+\alpha+\beta.
\]
For the collinearity graph one has
\[
|X|=(q+1)^2(q^2+1)(q^2-q+1),
\]
\[
k=q(q^2+1)(q^2+q+1),
\qquad
r=q^3-1,
\qquad
s=-(q^2+1).
\]
The $r$-eigenspace affords $\alpha$, with
\[
\alpha(1)=q(q^2+1)^2,
\]
which is even because $q$ is odd.

Revin--Vdovin formulate row (g) in $\Omega_8^+(q)$, where the Hall subgroup has structure $2.\Omega_8^+(2)$.  In this embedding the natural $8$-dimensional module restricts irreducibly to $2.\Omega_8^+(2)$.  Its central involution therefore acts as a scalar by Schur's lemma; since the characteristic is odd and the involution is nontrivial, that scalar is $-1$.  Thus it is the scalar central involution of $\Omega_8^+(q)$, and after passage to the simple quotient $G=P\Omega_8^+(q)$ its image is
\[
H\cong\Omega_8^+(2),
\qquad
|H|=2^{12}\cdot3^5\cdot5^2\cdot7.
\]
The action on singular $1$-spaces factors through this quotient.  It is therefore enough to show that $\alpha^H\ne0$.

We apply Lemma~\ref{lem:rank3-orbit-div}.  Since
\[
k-s=(q+1)(q^2+1)^2,
\]
and, for odd $q$,
\[
\gcd\bigl(q^2+1,(q+1)(q^2-q+1)\bigr)=2,
\]
we obtain
\begin{equation}\label{eq:orth8-divisor}
D_r=\frac{|X|}{\gcd(|X|,k-s)}
=\frac{(q+1)(q^2-q+1)}2.
\end{equation}
Hence $\alpha^H=0$ would force $D_r$ to divide $|H|$.

Suppose first that $q\ge559$.  Let $t$ be the number of $H$-orbits on $X$, and write
\[
a=\dim\alpha^H,\qquad b=\dim\beta^H.
\]
Then $t=1+a+b$.  Let $A$ be the adjacency matrix and let
\[
P_H=\frac1{|H|}\sum_{g\in H}g
\]
be the projection onto the $H$-fixed subspace.  Put $u=\operatorname{tr}(AP_H)$.  Spectrally,
\[
u=k+ra+sb.
\]
On the other hand
\[
u=\frac1{|H|}\sum_{g\in H}\operatorname{tr}(Ag)\ge0,
\]
because $\operatorname{tr}(Ag)$ counts the points $x\in X$ for which $x$ is adjacent to $g(x)$.
If $a=0$, then $b=t-1$ and therefore
\[
0\le u=k+s(t-1),
\qquad
(q^2+1)(t-1)\le k.
\]
On the other hand $t\ge |X|/|H|$.  Hence $a=0$ would imply
\[
(q^2+1)\left(\frac{|X|}{|H|}-1\right)\le k.
\]
After cancellation this is equivalent to
\[
q^3\le |H|-1=174182399.
\]
Since $559^3>174182399$, every admissible $q\ge559$ has $\alpha^H\ne0$.

Now suppose that $q<559$ is admissible.  For such an odd prime power $q$, the order formula for $P\Omega_8^+(q)$ and the equality
\[
|H|=2^{12}\cdot3^5\cdot5^2\cdot7
\]
together with the congruence conditions in \cite[Lemma~6.7(g)]{RV10} restrict $q$ further.  Applying LTE to $q^2-1$, $q^4-1$, and $q^6-1$ converts the required $2$- and $3$-parts into congruence conditions on $q$.  Combining these congruences with those in \cite[Lemma~6.7(g)]{RV10}, the odd prime powers below $559$ that remain are exactly
\[
q=173,277,283,317,347.
\]
For these five values the integer in \eqref{eq:orth8-divisor} factors as follows:
\[
\begin{array}{c|l}
q&D_r\\ \hline
173&3^2\cdot7\cdot13\cdot29\cdot109,\\
277&13\cdot139\cdot5881,\\
283&2\cdot7\cdot13\cdot71\cdot877,\\
317&3^2\cdot53\cdot33391,\\
347&2\cdot3^2\cdot29\cdot31\cdot1291.
\end{array}
\]
In every case $D_r$ has a prime divisor outside $\{2,3,5,7\}$, and therefore $D_r\nmid|H|$.  Lemma~\ref{lem:rank3-orbit-div} gives $\alpha^H\ne0$.  Thus $\alpha$ occurs in $1_H^G$, and its degree is even.
\end{proof}

\begin{theorem}\label{thm:classical-all}
Let $G$ be a finite simple classical group and let $1<H<G$ with $H\in\Hall_\pi(G)$.  Then there exists
\[
\chi\in\Irr(1_H^G)
\]
such that $\chi(1)$ is divisible by a prime in $\pi$.
\end{theorem}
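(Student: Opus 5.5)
The proof will be a case assembly of the results already established, organized by whether $2\in\pi$ and whether $p\in\pi$. Throughout, we may assume that $\pi\cap\pi(G)\ne\emptyset$ and that $\pi(G)\not\subseteq\pi$, since $1<H<G$. Replacing $\pi$ by $\pi\cap\pi(G)$ does not change $H$ or the conclusion, so we make this reduction as well.

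\emph{Case $2\notin\pi$.} Theorem~\ref{thm:classical} applies directly. It invokes Theorems~\ref{thm:linear} and~\ref{thm:unitary} and Propositions~\ref{prop:symplectic} and~\ref{prop:orthogonal} when $p\notin\pi$, and Proposition~\ref{prop:defining} when $p\in\pi$.

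\emph{Case $2\in\pi$.} We split further according to the remaining primes.

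\begin{itemize}
\item If $p\in\pi$, then Proposition~\ref{prop:defining-even} gives a constituent of degree divisible by $p$. Remark~\ref{rem:defining-list} confirms that every case of the Revin--Vdovin list is covered.
\item If $p\notin\pi$, then $q$ is odd, and we aim for an even-degree constituent.
\item If moreover $3\in\pi$, we go through the rows of \cite[Lemmas~4.3, 4.4 and~6.7]{RV10}. For linear and unitary groups, rows (b), (c), (e) are handled by Proposition~\ref{prop:even-A-large}, and row (d) by Proposition~\ref{prop:even-A6}. Symplectic groups are handled by Proposition~\ref{prop:even-sp}. For orthogonal groups, rows (a)--(e) are handled by Proposition~\ref{prop:even-orth-large}, and the exceptional rows (f), (g), (h) by Propositions~\ref{prop:orth-seven}, \ref{prop:orth-eight} and~\ref{prop:orth-nine}, respectively.
\item If instead $3\notin\pi$, we use Proposition~\ref{prop:even-threefree-large}. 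Its hypotheses exclude $\PSL_2(q)$ and $\PSU_3(q)$, which are supplied by Propositions~\ref{prop:even-A1} and~\ref{prop:U3}.
\end{itemize}

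Rank-one groups must also be routed correctly. We regard $\PSL_2(q)$ as covered by Proposition~\ref{prop:even-A1} for every $\pi$ containing $2$ but not $p$, including the $A_4$, $S_4$ and $A_5$ rows.

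The main obstacle is bookkeeping rather than new mathematics. We must check that the case lists in \cite{RV10} and \cite{VRsurvey} are exhausted and that each family of simple classical groups is counted once. The points most likely to need an explicit sentence are the following.

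\begin{itemize}
\item Small groups with exceptional isomorphisms, such as $\PSU_4(q)\cong P\Omega_6^-(q)$ and $\PSp_4(q)\cong\Omega_5(q)$, are each handled under one name.
\item $\PSU_3(q)$ with $2,3\in\pi$ needs to be placed. It should fall under the decomposition rows of Proposition~\ref{prop:even-A-large}, which treats $n=3$ separately.
\item The small cases $\PSL_3(2)$, $\PSL_2(7)$ and $\PSL_2(11)$ were treated explicitly.
\end{itemize}

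I would write the proof as this case table, citing for each family the result that applies. Where a result yields a constituent with an $M$-fixed vector for an overgroup $M\ge H$, I would invoke Lemma~\ref{lem:overgroup} to conclude that the constituent lies in $1_H^G$.
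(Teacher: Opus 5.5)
Your proposal is correct and is essentially the paper's proof. It uses the same split according to whether $2\in\pi$ and whether $p\in\pi$, and cites the same results for each case: Theorem~\ref{thm:classical}, Proposition~\ref{prop:defining-even}, and the nondefining-characteristic propositions, including Propositions~\ref{prop:even-A1} and~\ref{prop:U3} for $\PSL_2(q)$ and $\PSU_3(q)$; your explicit routing by $3\in\pi$ versus $3\notin\pi$ only makes visible the bookkeeping that the paper's one-paragraph proof leaves implicit.
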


\begin{proof}
The case $2\notin\pi$ is Theorem~\ref{thm:classical}.  Assume $2\in\pi$.  If the defining characteristic belongs to $\pi$, Proposition~\ref{prop:defining-even} applies.  In nondefining characteristic, Proposition~\ref{prop:even-A1} treats the rank-one linear groups, while Propositions~\ref{prop:even-sp}, \ref{prop:even-orth-large}, \ref{prop:even-A-large}, and \ref{prop:even-threefree-large} treat the decomposition and torus-normalizer families.  The exceptional embeddings are covered by Proposition~\ref{prop:even-A6} in type $A$, Proposition~\ref{prop:U3} for $\PSU_3(q)$, and Propositions~\ref{prop:orth-seven}, \ref{prop:orth-eight}, and \ref{prop:orth-nine} for the orthogonal groups.
\end{proof}

Liu--Yang--Zhang reduce their normality criterion to the simple-group assertion; see \cite{LYZ}.  They treat the alternating, sporadic, and exceptional groups of Lie type, while Theorem~\ref{thm:classical-all} proves it for classical simple groups.  Together with their reduction, this proves the following theorem.

\begin{theorem}\label{thm:normality}
Let $G$ be a finite group and let $H\in\Hall_\pi(G)$.  Then $H$ is normal in $G$ if and only if every irreducible constituent $\chi$ of $1_H^G$ has $\pi'$-degree.
\end{theorem}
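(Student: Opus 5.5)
The plan is to prove the two implications separately, using the reduction of Liu--Yang--Zhang \cite{LYZ} for the harder direction, with Theorem~\ref{thm:classical-all} as the missing classical input.

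For the easy direction, suppose $H\lhd G$. Then $1_H^G$ is the inflation of the regular character of $G/H$. So every irreducible constituent $\chi$ of $1_H^G$ has $H$ in its kernel and is an irreducible character of $G/H$. Hence $\chi(1)$ divides $|G/H|$ by Ito's theorem, or simply by $\chi(1)\mid|G:H|$. Since $H$ is a Hall $\pi$-subgroup, $|G:H|$ is a $\pi'$-number, so every such $\chi(1)$ is a $\pi'$-number.

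For the converse, assume every constituent of $1_H^G$ has $\pi'$-degree. I would follow the reduction in \cite{LYZ}, arguing by induction on $|G|$. Take a minimal counterexample and a minimal normal subgroup $N$. Constituents of $1_{HN/N}^{G/N}$ are constituents of $1_{HN}^G$, and hence of $1_H^G$ by Lemma~\ref{lem:overgroup}. So $HN/N$ is normal in $G/N$ by induction, and $HN\lhd G$.

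Next I would pass to $HN$ and $N$. The restriction of a constituent to $HN$ has constituents lying over $1_H^{HN}$, and $H\cap N$ is a Hall $\pi$-subgroup of $N$. Clifford theory, together with the fact that $|G:HN|$ is $\pi'$, transports the degree hypothesis to $1_{H\cap N}^N$ up to conjugation. For $N$ elementary abelian, the Hall property forces $N\le H$ or $N\cap H=1$, and coprime action arguments handle this case. For $N=S^k$ nonabelian, $H\cap N$ is a product of Hall subgroups of the factors, with a permutation of the factors. Using $\chi=\psi_1\times\dots\times\psi_k$ and tensor-induction, one reduces to a single simple factor $S$ with a Hall $\pi$-subgroup $1<K<S$.

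At that point the simple-group assertion is needed: some constituent of $1_K^S$ has degree divisible by a prime in $\pi$. This holds for alternating, sporadic and exceptional groups by \cite{LYZ}, and for classical groups by Theorem~\ref{thm:classical-all}. Lifting that constituent back to $G$ contradicts the hypothesis, so $H\cap N$ is $1$ or $N$, and in either case $H\lhd G$ follows.

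The step I expect to be the main obstacle is the lifting from $N$ to $G$ while controlling the $\pi$-part of the degree. Any extension or induction can only multiply degrees, which keeps divisibility by a prime in $\pi$. The delicate point is ensuring the fixed vector under $H$ survives. Here I would rely on the precise form of the reduction theorem in \cite{LYZ} rather than reprove it. The present theorem is then immediate from that reduction combined with Theorem~\ref{thm:classical-all}.
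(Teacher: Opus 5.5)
Your proposal is correct and follows the paper's route. The paper proves this theorem simply by combining the Liu--Yang--Zhang reduction \cite{LYZ} (together with their treatment of the alternating, sporadic and exceptional groups) with Theorem~\ref{thm:classical-all} for the classical groups, so your direct argument for the easy direction and your sketch of the reduction are supplementary. The lifting step you flag is also harmless: by Mackey, $(1_H^G)_N$ contains $1_{H\cap N}^N$, so any constituent $\theta$ of $1_{H\cap N}^N$ lies under some $\psi\in\Irr(1_H^G)$, and $\theta(1)\mid\psi(1)$ because $N\lhd G$.
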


\section*{Acknowledgements}
The author was supported in part by a grant from the Simons Foundation (Grant No.~918096).

\section*{Disclosure Statement}
The author reports no conflict of interest.

\section*{Data Availability Statement}
No data were used in this work.

\end{document}